\documentclass[runningheads]{llncs}
\usepackage[T1]{fontenc}
\usepackage{graphicx}
\usepackage{amsmath}
\usepackage{amssymb}
\usepackage{enumerate}

\usepackage{algorithm}
\usepackage{algpseudocode}

\DeclareMathOperator{\CA}{CA}
\DeclareMathOperator{\CAN}{CAN}
\DeclareMathOperator{\CAS}{CA^*}
\DeclareMathOperator{\kmax}{k^{\max}}

\newcommand{\BFM}{{\bf M}}

\newcommand{\BFW}{{\bf W}}
\newcommand{\BFone}{{\bf 1}}
\usepackage{ mathrsfs }

\newcommand{\Mb}{\text{M}_{\text{B}}}
\newcommand{\Mzero}{\text{M}_{0}}
\newcommand{\Mone}{\text{M}_{1}}

\usepackage{fontawesome5}

\newtheorem{conj}{Conjecture}

\usepackage{hyperref}
\usepackage{xurl}

\usepackage{algorithm}
\usepackage{algpseudocode}
\usepackage{graphicx}
\usepackage{tikz}
\usepackage{pgfplots}
\usepackage{pgfplotstable}
\usepackage{xcolor}
\usepackage{subcaption}
\usepackage{placeins}
\usepackage{ifthen}
\pgfplotsset{compat=1.17}
\usepackage{stackengine}
\usepackage{graphicx}
\usepackage{graphbox,graphicx}
\usepackage{adjustbox}
\usepackage{pgffor}
\usepackage{booktabs}
\usepackage{multirow}
\usepackage[normalem]{ulem}
\usepackage{soul}
\sethlcolor{yellow}

\newcommand{\MzeroMILP}{
\begin{subequations}
\begin{align}
    \min ~ & \sum_{i \in \mathcal{P}} \sum_{j \in \mathcal{O}_i} \sum_{c \in \mathcal{C}^j \setminus \mathcal{B}} \rlap{$\left(\left| j - \lfloor N/2 \rfloor \right| + 1\right) V_{i c}$} \span \label{lin:objp3} \\
        \text{s.t.} ~ &
                          \sum_{c \in \mathcal{C}_i} V_{i c} = 1 & \forall i \in \mathcal{P} \label{lin:vpc1} \\
                        & \sum_{i \in \mathcal{P}_c} V_{i c} \leq 1 & \forall c \in \mathcal{C} \label{lin:vpc2} \\
                        & \sum_{j \in \mathcal{O}_i} W_{i j} = 1 & \forall i \in \mathcal{P} \label{lin:wpi1} \\
                        & V_{i c} \leq W_{i j} & \forall i \in \mathcal{P}, j \in \mathcal{O}_i, c \in \mathcal{C}^j \label{lin:wpi2} \\
                        & W_{i_1 j_1} + \sum_{j_2 \in \mathcal{O}_{i_2} : j_2 \leq N-j_1} W_{i_2 j_2} \leq 1 &  
                        \begin{aligned} \forall (i_1,i_2) \in \mathcal{F} : \\ \mathscr{F}(i_1,i_2) = (0,0) \end{aligned} \label{lin:unb1p3} \\
                        & W_{i_1 j_1} + \sum_{j_2 \in \mathcal{O}_{i_2} : j_2 \geq j_1} W_{i_2 j_2} \leq 1 &  
                        \begin{aligned} \forall (i_1,i_2) \in \mathcal{F} : \\ \mathscr{F}(i_1,i_2) = (0,1) \end{aligned} \\
                        & W_{i_1 j_1} + \sum_{j_2 \in \mathcal{O}_{i_2} : j_2 \leq j_1} W_{i_2 j_2} \leq 1 &  
                        \begin{aligned} \forall (i_1,i_2) \in \mathcal{F} : \\ \mathscr{F}(i_1,i_2) = (1,0) \end{aligned} \\
                        & W_{i_1 j_1} + \sum_{j_2 \in \mathcal{O}_{i_2} : j_2 \geq N-j_1} W_{i_2 j_2} \leq 1 &  
                        \begin{aligned} \forall (i_1,i_2) \in \mathcal{F} : \\ \mathscr{F}(i_1,i_2) = (1,1) \end{aligned} \label{lin:unb2p3} \\
                        & V_{i_1 c_1} + V_{i_2 c_2} \leq 1 & 
                        \begin{aligned} \forall (i_1, i_2) \in \mathcal{F}, \\ (c_1, c_2) \in \mathcal{I}^{\mathscr{F}(i_1,i_2)} \end{aligned} \label{lin:vv1} \\
                        & V_{i_1 c_1} + V_{i_2 c_2} \leq 1 & \forall (i_1, i_2) \in \overline{\mathcal{F}}, (c_1, c_2) \in \mathcal{I} \label{lin:vv2} \\
                        & V_{i c} \leq U_c &  \forall i \in \tilde{\mathcal{P}}, c \in \mathcal{C}_i \cap \mathcal{B} \label{lin:uuse} \\
                        & V_{i c} + U_{c'} \leq 1 & \begin{aligned} \forall i \in \mathcal{P}, \\ c \in \mathcal{C}_i, c' \in \mathcal{B}, (c,c') \in \mathcal{I} \end{aligned} \label{lin:uconflict} \\
                        & V_{i c} \in \{0, 1\} & \forall i \in \mathcal{P}, c \in \mathcal{C}_i \\
                        & W_{i j} \in \{0, 1\} & \forall i \in \mathcal{P}, j \in \mathcal{O}_i \\
                        & U_c \in \{0, 1\} & \forall c \in \mathcal{B} \label{lin:lastp3}
\end{align}
\end{subequations}
}

\newcommand{\MbaselineMILP}{
\begin{subequations}
\begin{align}
\min ~ & \sum_{r \in \mathcal{R}} Z_r \label{eq:big_obj} 
\\
\text{s.t.} ~ & (Y_{r i_1 i_2 \alpha \beta} = 1) \rightarrow (M_{k i_1} = \alpha) & \begin{array}{r} \forall r \in \mathcal{R}, (i_1, i_2) \in \overline{\mathcal{F}}, \\ (\alpha, \beta) \in \{0,1\} \times \{0, 1\} \end{array} \label{eq:big_ym1} 
            \\
            & (Y_{r i_1 i_2 \alpha \beta} = 1) \rightarrow (M_{k i_2} = \beta) & \begin{array}{r} \forall r \in \mathcal{R}, (i_1, i_2) \in \overline{\mathcal{F}}, \\ (\alpha, \beta) \in \{0,1\} \times \{0, 1\} \end{array} \label{eq:big_ym2} 
            \\
            & \sum_{r \in \mathcal{R}} Y_{r i_1 i_2 \alpha \beta} \geq 1 & \begin{array}{r} \forall (i_1, i_2) \in \overline{\mathcal{F}}, \\ (\alpha, \beta) \in \{0,1\} \times \{0, 1\} \end{array}  \label{eq:big_y11}
            \\
            & \sum_{r \in \mathcal{R}} Y_{r i_1 i_2 \alpha \beta} \geq 1 & \begin{array}{r} \forall (i_1, i_2) \in \mathcal{F}, \mathscr{F}(i_1,i_2) \neq (\alpha, \beta), \\  (\alpha, \beta) \in \{0,1\} \times \{0, 1\} \end{array} \label{eq:big_y12}
            \\
            & \sum_{r \in \mathcal{R}} Y_{r i_1 i_2 \alpha \beta} = 0 & \begin{array}{r} \forall (i_1, i_2) \in \mathcal{F}, \mathscr{F}(i_1,i_2) = (\alpha, \beta), \\  (\alpha, \beta) \in \{0,1\} \times \{0, 1\} \end{array} \label{eq:big_y0}
            \\
            & (M_{k i_1} = \alpha) \rightarrow (M_{k i_1} = 1 - \beta) &  \begin{array}{r} \forall (i_1, i_2) \in \mathcal{F}, \mathscr{F}(i_1,i_2) = (\alpha, \beta), \\  (\alpha, \beta) \in \{0,1\} \times \{0, 1\} \end{array} \label{eq:big_mm1}
            \\
            & (M_{k i_2} = \beta) \rightarrow (M_{k i_1} = 1 - \alpha) &  \begin{array}{r} \forall (i_1, i_2) \in \mathcal{F}, \mathscr{F}(i_1,i_2) = (\alpha, \beta), \\  (\alpha, \beta) \in \{0,1\} \times \{0, 1\} \end{array} \label{eq:big_mm2}
            \\
            & Y_{r i_1 i_2 \alpha \beta} \leq Z_r & \begin{array}{r} \forall r \in \mathcal{R}, (i_1, i_2) \in \overline{\mathcal{F}}, \\ (\alpha, \beta) \in \{0,1\} \times \{0, 1\} \end{array} \label{eq:big_yz}
            \\
            & M_{r i} \in \{0, 1\} & \forall r \in \mathcal{R}, i \in \mathcal{P} \\
            & Y_{r i_1 i_2 \alpha \beta} \in \{0, 1\} & \begin{array}{r} \forall r \in \mathcal{R}, (i_1, i_2) \in \overline{\mathcal{F}}, \\ (\alpha, \beta) \in \{0,1\} \times \{0, 1\} \end{array} \\
            & Z_r \in \{0, 1\} & \forall r \in \mathcal{R} \label{eq:big_last}
\end{align}
\end{subequations}
}

\newcommand{\FigPerformance}{
\begin{figure}[b!]
\centering
\includegraphics[width=\textwidth]{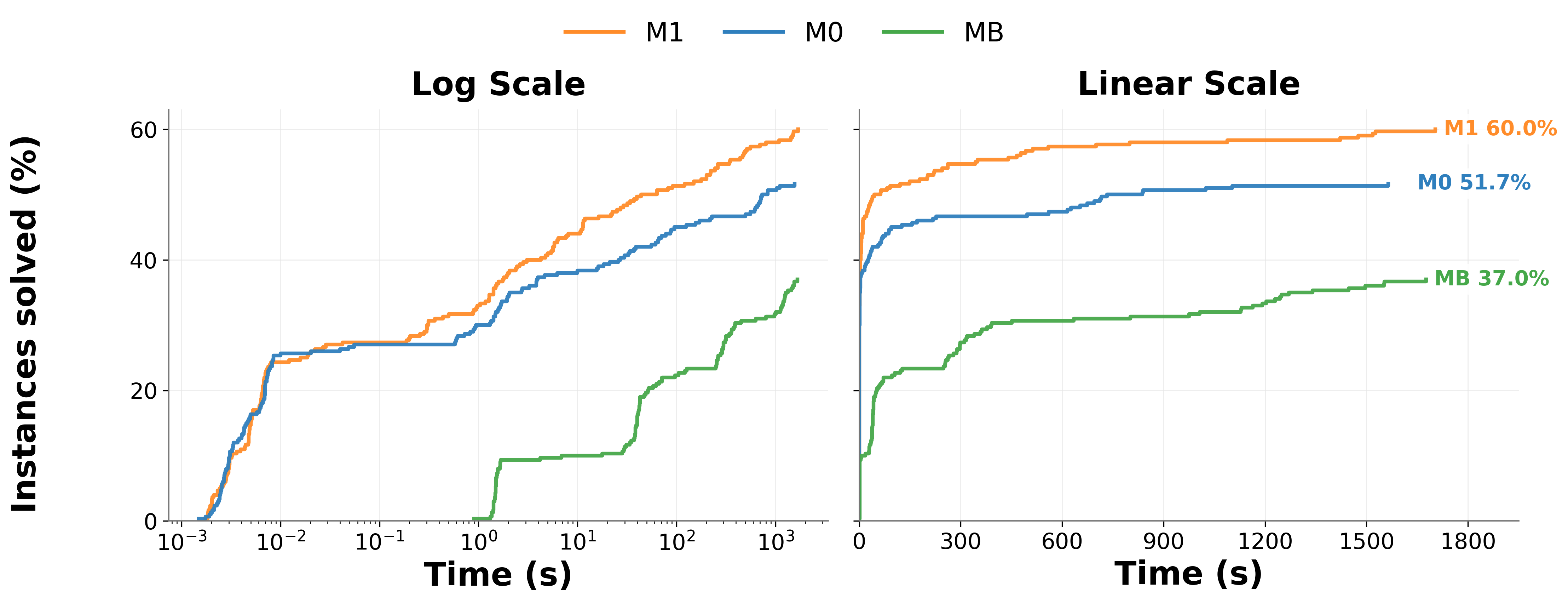}
\caption{Cumulative ratio of instances solved by each exact method over time, in logarithmic time scale (left) and linear time scale (right).}
\label{fig:performance}
\end{figure}
}

\newcommand{\FigExactRows}{
\begin{figure}[h!]
\centering
\includegraphics[width=\textwidth]{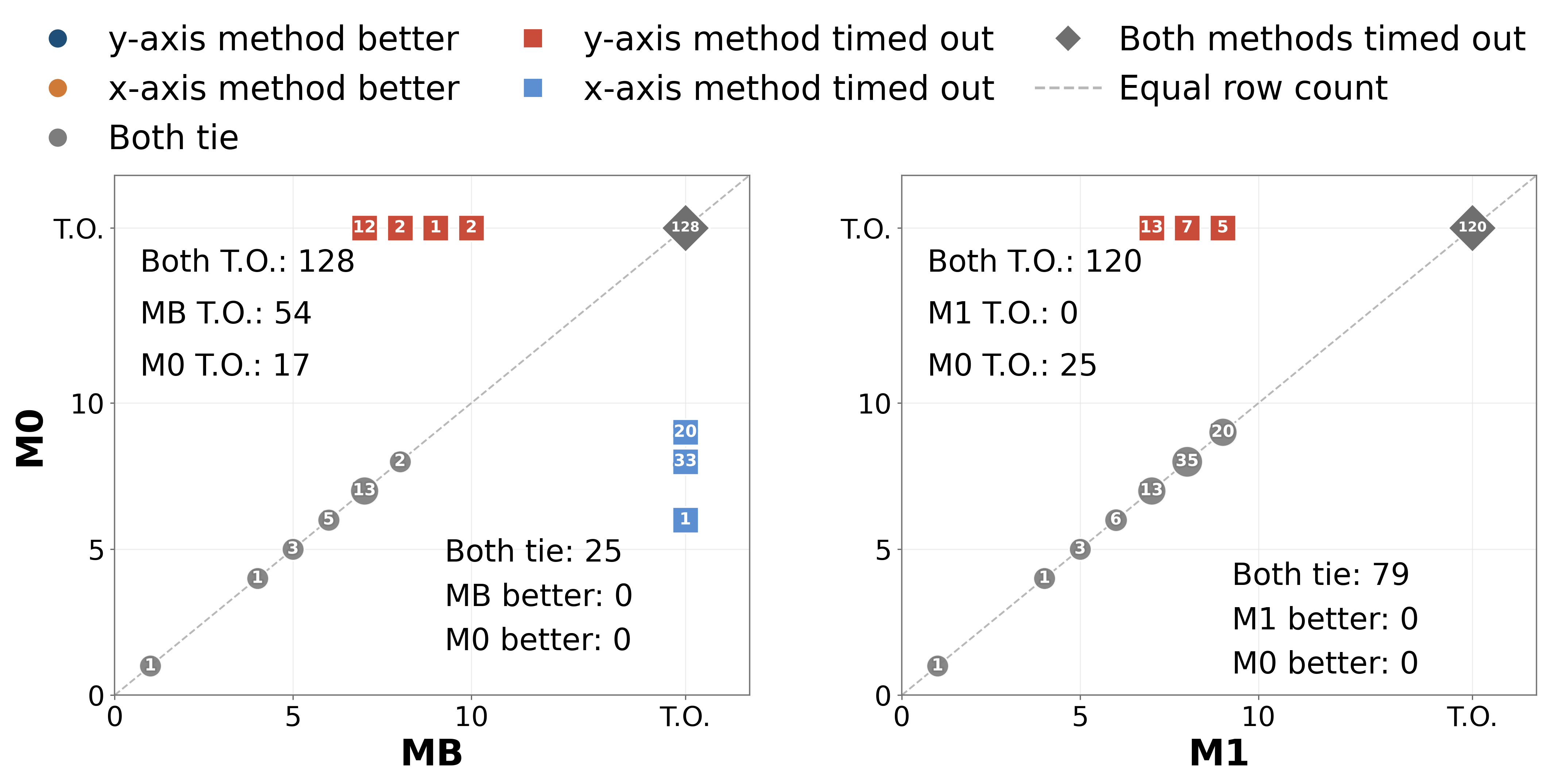}
\caption{Pairwise comparison of solution values (numbers of rows) and timeouts for exact methods, 
contrasting $\Mb$ with $\Mzero$ (left) and $\Mone$ with $\Mzero$ (right).}
\label{fig:exact-rows}
\end{figure}
}

\newif\ifpreprint
\preprinttrue

\begin{document}
\title{Optimal Combinatorial Testing with Constraints: The Balancing Act}
%
%\titlerunning{Abbreviated paper title}
% If the paper title is too long for the running head, you can set
% an abbreviated paper title here
%
\author{
Thiago Serra\inst{1} \and
Changkun Guan\inst{2} \and
Hunter Gehman\inst{3} \and
Sumit Dhar\inst{3} \and
\\ Mikey Ferguson\inst{4} \and
John Hooker\inst{5} \and
Marcel Schoppers 
}
%Third Author\inst{3}\orcidID{2222--3333-4444-5555}}
%\author{Anonymous Authors}
%
\authorrunning{T. Serra et al.}
% First names are abbreviated in the running head.
% If there are more than two authors, 'et al.' is used.
%
\institute{
University of Iowa, Iowa City IA, United States \\
\email{thiago-serra@uiowa.edu} \and
Georgia Institute of Technology, Atlanta GA, United States \and 
Bucknell University, Lewisburg PA, United States \and
University of Tulsa, Tulsa OK, United States \and
Carnegie Mellon University, Pittsburgh PA, United States 
}
\maketitle              % typeset the header of the contribution
\begin{abstract}
%The abstract should briefly summarize the contents of the paper in
%150--250 words.
Imagine that you are in front of a cockpit with several on--off buttons. If you were to thoroughly test it, you would need to try a prohibitive number of configurations. But since most bugs in practice can be isolated to  interactions among few components,  having tests that cover every possible pairwise configuration is a good start. However, this is a problem that goes from easy to NP--hard as soon as some pairwise configurations are forbidden. In this paper, we revisit unconstrained combinatorial testing with pairwise coverage on binary parameters and contrast it with the constrained case, showing and conjecturing properties that either are upheld or  change from one to the other. In particular, we discuss the extent to which it remains a good idea---and sometimes indeed optimal---to have every button almost as many times on as off to minimize testing. We propose the first exact algorithm based on integer programming and a faster heuristic that often produces optimal solutions, both outperforming or competitive with their baselines.
\keywords{Combinatorial testing  \and Graphs \and Integer programming.}
\end{abstract}

\section{Introduction}

Many safety tests in software and other types of systems involve the identification of faults due to the interaction among components. 
In the simplest setting, we may regard each component $i$ as an on--off switch, 
say as a \emph{parameter} $X_i$ such that $X_i =1$ when that component is on and $X_i=0$ otherwise. 
For every test of the system, we need to assign a value for each of the system parameters. 
We would like to know if something unexpected, which is what we denote as a \emph{fault}, would occur due to the combination of values for such parameters. 
For two distinct components $i$ and $j$, 
we would like to have tests in which both components are on ($X_i=1$ and $X_j=1$), only the first component is on ($X_i=1$ and $X_j=0$), only the second component is on ($X_i=0$ and $X_j=1$), or both components are off ($X_i=0$ and $X_j=0$); 
provided that each of those cases is feasible.
We would like to minimize the number of tests to cover such cases. 

While testing every configuration of the entire system would be impractical as the number of components grow, 
having enough tests to cover every configuration on subsets of limited size does not grow as fast. 
In fact, most faults can be attributed to interactions among of small number of components~\cite{wallace2001medical,kuhn2002software,kuhn2010practical,Softwarefaultinteractionsandimplicationsforsoftwaretesting2004,PseudoExhaustiveTestingforSoftware2006}. 
If a test leads to a fault, then additional tests can help us isolate which specific subset interaction caused that fault~\cite{zeller2002delta}. 
This form of analysis, \emph{combinatorial interaction testing}, has been applied to 
software testing~\cite{kuhn2002software}, circuit verification~\cite{tang1983circuit}, memory correction~\cite{dumer1989memory}, gene regulation~\cite{shasha2001regulation}, materials development~\cite{cawse2003materials}, 
aircraft development~\cite{Aerospace}, 
and machine learning verification~\cite{ma2018combinatorialtestingdeeplearning}. 
We refer the reader to references  \cite{colbourn2004combinatorial,nist_comb_testing,Aerospace,ma2018combinatorialtestingdeeplearning} for more information on applications.

In this work, 
we focus on the \emph{covering array problem},  
which concerns with obtaining sufficient tests to cover every configuration on subsets of limited size, 
hence ensuring that any fault caused by a subset of such size could be detected.  
Although this topic has extensive literature discussed in multiple surveys~\cite{colbourn2004combinatorial,wu2019surveyconstrainedcombinatorialtesting,Grindal2005CombinationTS,Nie2011ASO}, 
comparatively few references focused on obtaining provably optimal solutions~\cite{kleitman1973families,hnich2006cp,danziger2009cafe,maltais2010cafe,MIP2013,kadioglu2017cg,yang2021decomposing} and the only approach to the constrained case is by enumeration~\cite{danziger2009cafe}. 
The presence of pairwise constraints makes combinatorial testing NP-hard~\cite{maltais2010cafe}, 
but constraints preventing specific configurations are often dealt with in an ad-hoc manner by altering the solutions produced for the unconstrained case~\cite{wu2021comparative}. 
The typical approach is by popular families of heuristic algorithms such as IPO~\cite{IPO1998,IPOG2007,IPOGIPOGD2008,IPOGHyper2017,MFT2015,IPOfamilybook2014}, AETG~\cite{AETG1996,Cohen1997TheAS,AETGSAT2008}, orthogonal arrays \cite{Williams2000}, PICT\cite{Czerwonka2006PairwiseTI}, DDA~\cite{bryce2006prioritized,Bryce2007TheDA}, as well as maximum satisfiability~\cite{ansotegui2022maxsat} and metaheuristics~\cite{Cohen2003ConstructingTS,HARTMAN2004149,SA2007,GA2011,CCAG2021}. Meanwhile, Integer Programming~(IP) has only been applied to the unconstrained case~\cite{MIP2013,kadioglu2017cg} or as part of heuristics~\cite{2017ferrer,Ferrer2021}.

To the best of our knowledge, no prior work has described the properties of optimal solutions for the covering array problem beyond the unconstrained case. 
Even then, we believe that a new perspective on the unconstrained case helps understanding what happens to the structure of the optimal solutions when constraints are imposed. 
Hence, our contributions are the following:
\begin{enumerate}[(i)]
\item we revisit the unconstrained binary pairwise covering array problem by focusing on the incidence of \emph{balanced columns} in optimal solutions, 
each implying that a parameter is tested as often with zero and one (Section~\ref{sec:unconstrained}); 
\item we establish how the structure of optimal solutions change, or not, when constraints prevent assignments to pairs of parameters, 
which define what we denote as \emph{constrained pairwise covering problems} (Section~\ref{sec:pairwise}); and 
\item we present and evaluate IP models leveraging our findings in exact as well as heuristic approaches to the constrained case (Sections~\ref{sec:milp} and \ref{sec:results}).
\end{enumerate}

\section{Background and Notation}\label{sec:background}

Consider a system with parameters $X_1, X_2, \ldots, X_k$ and that each parameter is binary. We assign values to these parameters to verify the correct functioning of the system. 
Each such assignment is denoted as a \emph{test}. 
A collection of tests which satisfies a desired verification property is denoted as a \emph{covering array}. 
In terms of what would be desired from such tests, 
trying every possible test ($2^k$ in total) would be impossible for sufficiently large $k$. However, it is technically feasible, and in fact effective to try all configurations on small subsets of parameters. 
In the most common setting, \emph{pairwise covering}, those tests should contain every possible assignment for each of the $\binom{k}{2}$ pairs of parameters.
For two parameters $X_i$ and $X_j$, for example, we should have each pairwise assignment among $(X_i,X_j)=(0,0), (0,1), (1,0),$ and $(1,1)$ in at least one of the tests. The matrix $\BFM^{(1)}$ below, with columns denoting parameters and rows denoting tests, represents an optimal pairwise covering array for $k=10$ unconstrained binary parameters, in the sense that it has the minimum number $N=6$ of tests:
\[
\arraycolsep6pt
\BFM^{(1)} = \begin{pmatrix} 
1 & 1 & 1 & 1 & 1 & 1 & 1 & 1 & 1 & 1 \\ 
0 & 0 & 0 & 1 & 0 & 0 & 1 & 0 & 1 & 1 \\ 
0 & 0 & 1 & 0 & 0 & 1 & 0 & 1 & 0 & 1 \\ 
0 & 1 & 0 & 0 & 1 & 0 & 0 & 1 & 1 & 0 \\ 
1 & 0 & 0 & 0 & 1 & 1 & 1 & 0 & 0 & 0 \\ 
1 & 1 & 1 & 1 & 0 & 0 & 0 & 0 & 0 & 0 
\end{pmatrix}.
\] 
In other words, there are $2^{10}$ distinct assignments on 10 binary parameters, but with 6 of those assignments we covered every pairwise interaction. For example, the test $(X_1, X_2, \ldots, X_{10}) = (0, 0, \ldots , 0 )$ is not part of $\BFM^{(1)}$, but there are rows of zeros in every submatrix on two columns, i.e., each pairwise assignment $(X_1,X_2) = (0,0)$, $(X_1,X_3) = (0,0), \ldots, (X_9,X_{10}) = (0,0)$ occurs at least once. 
Similarly, each such submatrix also covers $(X_i, X_j) = (0,1), (1,0),$ and $(1,1)$.

In the general decision version of the covering array problem, we want to determine if there are $N$ tests on $k$ parameters, each parameter having $v$ values, that define a covering array of \emph{strength} $t$. If so, then every possible assignment for each tuple of $t$ out of the $k$ parameters occurs in some test. 
Following the conventions in the literature, 
let $\CA(N;t,k,v)$ be the set of $N \times k$ matrices, or covering arrays, in which every submatrix on $t$ columns includes all $t-$tuples over $\{0, \ldots, v-1\}^t$ at least once; and 
let $\CAN(t,k,v)$ be the minimum $N$ such that $\CA(N;t,k,v) \neq \emptyset$.  
In this paper, 
we also introduce some additional notation: 
we use $\BFM_i$ for the $i$-th column of $\BFM \in \CA(N;t,k,v)$, 
i.e., the values assigned to $X_i$; 
and we define $\CAS(t,k,v)$ as the set $\CA(N;t,k,v)$ in which $N=\CAN(t,k,v)$.

The most commonly studied case consists of $v=2$ (binary parameters) and $t=2$ (pairwise covering), 
for which the optimal number of tests is known: 
$\CAN(2,k,2) = N$, where $N$ is the smallest integer such that 
$k \leq \binom{N-1}{\lceil N/2 \rceil}$~\cite{renyi1970foundations,katona1973applications,kleitman1973families}. 
Hence, $\kmax(N) := \binom{N-1}{\lceil N/2 \rceil}$ is the largest number of binary parameters for which $N$ tests may ensure pairwise covering. 
Bounds on the number of tests $N$ for other families of cases involving $t,k,$ and $v$ can be found in \cite{colbourn2004combinatorial} and \cite{lawrence2011survey}.

For $N=6$, $\kmax(N)=10$ as in matrix $\BFM^{(1)}$. 
In fact, we have obtained $\BFM^{(1)}$ with a commonly used construction technique by Kleitman and Spencer~\cite{kleitman1973families}, 
which consists of having a row in which all values are one and the remaining rows are such that each column is distinct and has $\lfloor \frac{N}{2}\rfloor -1$ other elements having value one. 
More generally,  
we discuss along the paper that 
the presence of a uniform row, or the equivalent of such a row, as well as  
having columns with a similar number of zeros and ones  are important for characterizing optimal solutions. Those are still relevant when pairwise constraints apply to the system.

In this paper, we consider pairwise covering problems in which some assignments are forbidden. 
For example, we may have $(X_i, X_j) \neq (0,0)$ for a specific pair of parameters, 
in which case we must cover $(X_i, X_j) = (0,1), (1,0),$ and $(1,1)$ across the tests, but $(X_i, X_j) = (0,0)$ must never occur. 
We note that having more than one forbidden assignment for the same pair of parameters actually simplifies the covering array problem in one way or another. 
When there are two forbidden assignments, 
we have one of the following cases:
\[
\begin{array}{ccc}
(X_i, X_j) \neq (\alpha, \beta) & \wedge & 
\left\{
\begin{array}{ccc}
(X_i, X_j) \neq (\alpha, 1-\beta) & \rightarrow & X_i = 1 - \alpha. \\
(X_i, X_j) \neq (1-\alpha, \beta) & \rightarrow & X_j = 1 - \beta. \\
(X_i, X_j) \neq (1-\alpha, 1-\beta) & \rightarrow & 
\left \{ 
\begin{array}{ccc}
X_i = X_j & \text{ if } & \alpha \neq \beta. \\
X_i = 1- X_j & \text{ if } & \alpha = \beta.
\end{array}
\right.
\end{array}
\right.
\end{array}
\]
Therefore, 
having two forbidden assignments for a given pair of parameters implies that 
either one of the parameters is fixed (first two deductions), 
or that the value for one parameter is dependent on the other (last two deductions); 
and in both cases we can reduce the problem by removing one of those parameters. 
With three forbidden assignments,
say all but $(X_i, X_j) = (\alpha, \beta)$ forbidden,
then $X_i = \alpha$ and $X_j = \beta$. 
Finally, and obviously, the covering array problem is infeasible with four forbidden assignments on the same pair of parameters. 
Hence, without loss of generality, we assume the following simplifying conditions:
\newtheorem{assumption}{Assumption}
\begin{assumption}\label{ass:first}
No parameter has a fixed value.
\end{assumption}
\begin{assumption}
There is at most one constraint on every pair of parameters.
\end{assumption}
\begin{assumption}\label{ass:last}
There are no parameters with an implicitly fixed value, 
nor a pair of parameters with an implicit constraint other than those known in advance.
\end{assumption}
In practice, 
we can remove parameters with fixed or implied values in preprocessing 
and then reintroduce them with the corresponding values in postprocessing. 
Likewise, 
we can generate all implied constraints with the deductive steps as outlined in~\cite{MFT2014,MFT2015}. 
For example, $(X_1,X_2) \neq (1,0) \wedge (X_2,X_3) \neq (1,0) \rightarrow (X_1,X_3) \neq (1,0)$.
Hence, Assumptions~\ref{ass:first} to \ref{ass:last} are also not restricting the scope of our work.

We will avoid an implicit dependence on the assumptions above in our theoretical results in Section~\ref{sec:pairwise} by using the following definition: 

\begin{definition}[Reduced Constrained Pairwise Covering Problem]\label{def:reduced}
    Let a constrained pairwise covering problem be \emph{reduced} when it has (i) no fixed parameter values, and (ii) at most one pairwise constraint per pair of parameters; be those explicitly defined or implied by the other constraints.
\end{definition}

Effectively, Definition~\ref{def:reduced} characterizes a constrained pairwise covering problem to which the preprocessing previously described has already been applied.

Although there are always constraints in the pairwise covering array problem, 
since every pair of assignments should be either covered by a test or forbidden in all tests, 
we refer to this problem as being \emph{unconstrained} if no specific interactions are forbidden. 
Our study starts by revisiting the unconstrained case.

\section{Revisiting the Unconstrained Case}\label{sec:unconstrained}

There are some properties that characterize many---and, in some cases, all---optimal solutions for the unconstrained covering array problem. 
By first understanding and reframing those in Section~\ref{sec:unconstrained}, 
we will later generalize such properties to circumvent forbidden interactions in a systematic way in Sections~\ref{sec:pairwise} and \ref{sec:milp}.

We continue using $k$ and $N$ to denote the number of parameters and tests.

\subsection{Four Simple Facts About Unconstrained Covering Arrays}\label{sec:feasibility}

The propositions below are possibly already known in one form or another, 
but they are certainly useful.
We start with necessary conditions for a matrix to be a covering array. 
Namely, all the columns should be unique and not the complement of one another. 
We formalize those conditions as follows:

\begin{proposition}[Uniqueness of Columns] \label{prop:unique_cols}
Let $k, N \in \mathbb{Z}_+$,  $N \geq \CAN(2,k,2)$. 
Any two columns $\BFM_i, \BFM_j \in \{0,1\}^N$, $i \neq j$, of a matrix $\BFM \in \CA(N;2,k,2)$ are such that $\BFM_i \neq \BFM_j$. 
\end{proposition}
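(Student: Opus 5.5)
The plan is a direct proof by contradiction that uses only the definition of $\CA(N;2,k,2)$. Suppose that $\BFM \in \CA(N;2,k,2)$ has two distinct indices $i \neq j$ with $\BFM_i = \BFM_j$. I will find a $2$-tuple over $\{0,1\}^2$ that never occurs in the submatrix on columns $i$ and $j$. That contradicts the requirement that every submatrix on $t=2$ columns contains all four pairs $(0,0),(0,1),(1,0),(1,1)$.

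The key step is to look at an arbitrary row $r \in \{1,\ldots,N\}$. Since $\BFM_i = \BFM_j$, the row restricted to columns $i$ and $j$ is $(\BFM_{ri}, \BFM_{rj}) = (\BFM_{ri}, \BFM_{ri})$. Every row therefore shows one of the two ``diagonal'' pairs $(0,0)$ or $(1,1)$. The pair $(0,1)$ then appears in no row, and neither does $(1,0)$. Hence $(X_i,X_j)=(0,1)$ is never covered, and $\BFM \notin \CA(N;2,k,2)$, which is the desired contradiction.

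The hypothesis $N \geq \CAN(2,k,2)$ only guarantees that $\CA(N;2,k,2)$ is nonempty, so the statement is not vacuous; the argument itself does not use it. I expect no real obstacle here. The only point to be careful about is that covering arrays in this setting are unconstrained, so every one of the four pairs must appear and none may be excused as forbidden. This is why the argument will not carry over verbatim to the constrained problems of Section~\ref{sec:pairwise}, where a forbidden pair could make identical columns admissible in principle. The same one-line argument applied to a column $\BFM_j = \BFONE - \BFM_i$ shows that the pairs $(0,0)$ and $(1,1)$ are never covered. I would state this as a natural companion to the present proposition, for the complement condition announced just before it.
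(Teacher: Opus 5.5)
Your proof is correct and is essentially the paper's argument: if $\BFM_i = \BFM_j$, every row restricted to columns $i$ and $j$ reads $(0,0)$ or $(1,1)$, so $(X_i,X_j)=(0,1)$ and $(X_i,X_j)=(1,0)$ are never covered. You are also right that the hypothesis $N \geq \CAN(2,k,2)$ plays no role in the argument.
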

\begin{proof}%[Proposition~\ref{prop:unique_cols}]
If $\BFM_i = \BFM_j$, then $\BFM$ would not cover the assignments $(X_i, X_j) = (0,1)$ and $(X_i, X_j) = (1,0)$. Hence, no two columns are the same. \qed 
\end{proof}

\begin{proposition}[Exclusion of Complement] \label{prop:not_complement}
Let $k, N \in \mathbb{Z}_+$,  $N \geq \CAN(2,k,2)$. 
Any two columns $\BFM_i, \BFM_j \in \{0,1\}^N$, $i \neq j$, of a matrix $\BFM \in \CA(N;2,k,2)$ are such that $\BFM_i \neq \BFone -\BFM_j$, $\BFone = \{1\}^N$. 
\end{proposition}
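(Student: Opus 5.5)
The plan is a direct contradiction argument that mirrors the proof of Proposition~\ref{prop:unique_cols}. Suppose, for some $i \neq j$, that $\BFM_i = \BFone - \BFM_j$. Then in every row $r$ the entries satisfy $M_{ri} + M_{rj} = 1$. So the pair $(X_i, X_j)$ takes only the values $(0,1)$ and $(1,0)$ across all tests.

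Consequently, neither $(X_i,X_j) = (0,0)$ nor $(X_i,X_j)=(1,1)$ is covered by any row of $\BFM$. This contradicts $\BFM \in \CA(N;2,k,2)$, which requires every submatrix on two columns to contain all four binary pairs. Hence $\BFM_i \neq \BFone - \BFM_j$.

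There is no real obstacle. The only point to be careful about is that the argument uses only the covering requirement for the single pair $\{i,j\}$. In particular, the hypothesis $N \geq \CAN(2,k,2)$ is needed only to ensure $\CA(N;2,k,2)$ is nonempty and plays no role in the deduction.

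One could additionally remark that either uncovered pair alone already suffices for the contradiction.
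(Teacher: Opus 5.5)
Your proof is correct and matches the paper's argument: complementary columns force every row to show $(0,1)$ or $(1,0)$ on $(X_i,X_j)$, so $(0,0)$ and $(1,1)$ are never covered, which contradicts the covering property. One small correction: the hypothesis $N \geq \CAN(2,k,2)$ is not needed even to guarantee nonemptiness, since the claim holds vacuously when $\CA(N;2,k,2)=\emptyset$.
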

\begin{proof}%[Proposition~\ref{prop:not_complement}]
If $\BFM_i = \BFone - \BFM_j$, then $\BFM$ would not cover the assignments $(X_i, X_j) = (0,0)$ and $(X_i, X_j) = (1,1)$. Hence, no two columns are  complements.
\qed
\end{proof}

Moreover, we can obtain alternate covering arrays by taking the complement of columns or swapping them. 
These properties will be useful later: 

\begin{proposition}[Covering by Column Complement] \label{prop:alternate_complement}
Let $k, N \in \mathbb{Z}_+$,  $N \geq \CAN(2,k,2)$. 
For any $\BFM \in \CA(N;2,k,2)$ and column $\BFM_i$, 
let matrix $\BFM'$ be such that $\BFM'_i = \BFone - \BFM_i$, $\BFone = \{1\}^N$, and $\BFM'_j = \BFM_j ~ \forall j \neq i$. 
Then $\BFM' \in \CA(N;2,k,2)$.
\end{proposition}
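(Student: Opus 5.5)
The plan is a direct verification: check that every pair of columns of $\BFM'$ still covers all four assignments in $\{0,1\}^2$. Since $\BFM'$ has the same number $N$ of rows and the same number $k$ of columns as $\BFM$, this is all that membership in $\CA(N;2,k,2)$ requires. I will split the pairs $\{j,l\}$, $j \neq l$, according to whether they involve column $i$.

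\emph{Pairs not involving $i$.} If $j,l \neq i$, then $\BFM'_j = \BFM_j$ and $\BFM'_l = \BFM_l$. The $N \times 2$ submatrix of $\BFM'$ on these columns is therefore identical to that of $\BFM$, and it covers $(0,0),(0,1),(1,0),(1,1)$ because $\BFM$ is a covering array.

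\emph{Pairs involving $i$.} Take $j \neq i$ and a target assignment $(\alpha,\beta) \in \{0,1\}^2$ for $(X_i,X_j)$ in $\BFM'$. Because $\BFM$ covers all four assignments on columns $i$ and $j$, there is a row $r$ with $(\BFM_{ri},\BFM_{rj}) = (1-\alpha,\beta)$. In that row, $\BFM'_{ri} = 1-\BFM_{ri} = \alpha$ and $\BFM'_{rj} = \BFM_{rj} = \beta$, so row $r$ of $\BFM'$ covers $(\alpha,\beta)$. Put differently, the map $(a,b)\mapsto(1-a,b)$ is a bijection of $\{0,1\}^2$, so the set of covered pairs on columns $(i,j)$ is carried onto all of $\{0,1\}^2$. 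The case of a pair written as $(j,i)$ is symmetric.

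No step is a real obstacle. The only point needing care is to state that complementing acts bijectively on $\{0,1\}^2$ in the coordinate of column $i$, so full coverage is preserved exactly.
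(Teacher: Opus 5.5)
Your proof is correct and takes the same route as the paper. Pairs avoiding column $i$ are unchanged, and for a pair involving $i$, any row of $\BFM$ that covers $(1-\alpha,\beta)$ covers $(\alpha,\beta)$ in $\BFM'$.
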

\begin{proof}
Since $\BFM$ and $\BFM'$ are identical except for column $i$, then all pairwise assignments not involving parameter $X_i$ are covered by $\BFM'$ if they are covered by $\BFM$. 
For any row of $\BFM$ covering $(X_i,X_j) = (\alpha, \beta)$, the same row in $\BFM'$ covers  $(X_i,X_j) = (1-\alpha, \beta)$. 
Thus, if there is a row in $\BFM$ covering each assignment of $(X_i,X_j)$, then there is another row in $\BFM'$ covering each of those assignments. 
Hence, taking the complement of a column yields another covering array.
\qed
\end{proof}

For an application of Proposition~\ref{prop:alternate_complement}, 
consider the matrices $\BFM^{(2)}$ and $\BFM^{(3)}$:
\[
\arraycolsep6pt
\BFM^{(2)} = \begin{pmatrix} 
1 & 1 & 1 & 1 \\ 
0 & 0 & 0 & 1 \\ 
0 & 0 & 1 & 0 \\ 
0 & 1 & 0 & 0 \\ 
1 & 0 & 0 & 0 
\end{pmatrix},~ 
\BFM^{(3)} = \begin{pmatrix} 
1 & 1 & 1 & 0 \\ 
0 & 0 & 0 & 0 \\ 
0 & 0 & 1 & 1 \\ 
0 & 1 & 0 & 1 \\ 
1 & 0 & 0 & 1 
\end{pmatrix}.
\] 
Both of those matrices represent optimal pairwise covering arrays for $k=5$. Similarly to $\BFM^{(1)}$, the matrix $\BFM^{(2)}$ is obtained with the construction technique by Kleitman and Spencer~\cite{kleitman1973families}. 
In turn, 
the matrix $\BFM^{(3)}$ is obtained based on Proposition~\ref{prop:alternate_complement} by taking the complement of the last column of $\BFM^{(2)}$.

\begin{proposition}[Covering by Column Swapping] \label{prop:multiplicity}
Let $k, N \in \mathbb{Z}_+$,  $N \geq \CAN(2,k,2)$. 
For any $\BFM \in \CA(N;2,k,2)$ and columns $\BFM_{i_1}$ and $\BFM_{i_2}$, 
let matrix $\BFM'$ be such that $\BFM'_{i_1} = \BFM_{i_2}$, $\BFM'_{i_2} = \BFM_{i_1}$, and $\BFM'_j = \BFM_j ~ \forall j \notin {i_1, i_2}$. 
Then $\BFM' \in \CA(N;2,k,2)$.
\end{proposition}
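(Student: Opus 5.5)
The plan is to argue directly from the definition of $\CA(N;2,k,2)$, following the same template as the proof of Proposition~\ref{prop:alternate_complement}. The covering property concerns the multiset of columns: every submatrix on two distinct columns must contain all four pairs $(0,0),(0,1),(1,0),(1,1)$ among its rows. Swapping two columns only relabels which parameter each column vector is attached to. So I will show that every pair of distinct columns of $\BFM'$ is, up to ordering, a pair of distinct columns of $\BFM$, and that the covering property does not depend on the order within the pair.

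First, I define the transposition $\sigma$ on $\{1,\ldots,k\}$ that exchanges $i_1$ and $i_2$ and fixes every other index. Then $\BFM'_j = \BFM_{\sigma(j)}$ for all $j$. Next, I take an arbitrary pair $j_1 \neq j_2$ and an arbitrary target assignment $(X_{j_1},X_{j_2}) = (\alpha,\beta)$. Since $\sigma$ is a bijection, $\sigma(j_1) \neq \sigma(j_2)$. Because $\BFM \in \CA(N;2,k,2)$, some row $r$ satisfies $(\BFM_{r,\sigma(j_1)}, \BFM_{r,\sigma(j_2)}) = (\alpha,\beta)$. That same row $r$ of $\BFM'$ then has $(\BFM'_{r j_1}, \BFM'_{r j_2}) = (\alpha,\beta)$. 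As $\alpha$, $\beta$, $j_1$ and $j_2$ were arbitrary, $\BFM'$ covers every pairwise assignment. The matrix also has the same dimensions $N\times k$, so $\BFM' \in \CA(N;2,k,2)$.

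If I want to mirror the case style of Proposition~\ref{prop:alternate_complement}, I can split into three cases instead of using $\sigma$. Pairs involving neither $i_1$ nor $i_2$ are untouched. A pair $\{i_1,i_2\}$ maps to itself with the order reversed, so the assignment $(\alpha,\beta)$ is covered in $\BFM'$ by the row of $\BFM$ covering $(X_{i_1},X_{i_2})=(\beta,\alpha)$. A pair $\{i_1,j\}$ with $j\notin\{i_1,i_2\}$ corresponds to the pair $\{i_2,j\}$ of $\BFM$, and symmetrically for $\{i_2,j\}$.

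There is no real obstacle here. The only point needing care is the reversed-order case $\{i_1,i_2\}$, where one must note that $\BFM$ covers all four ordered pairs, including $(\beta,\alpha)$.
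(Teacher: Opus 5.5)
Your proof is correct and is essentially the paper's argument. The paper uses exactly your three-case split: pairs avoiding $i_1,i_2$ are untouched, the pair $(i_1,i_2)$ is covered through the reversed assignment $(\beta,\alpha)$, and a pair $(i_A,j)$ in $\BFM'$ inherits coverage from $(i_B,j)$ in $\BFM$. Your transposition formulation $\BFM'_j=\BFM_{\sigma(j)}$ packages those three cases into a single step, and it also handles the degenerate case $i_1=i_2$ for free.
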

\begin{proof}
Since $\BFM$ and $\BFM'$ are identical except for columns $i_1$ and $i_2$, then all pairwise assignments not involving either parameter $X_{i_1}$ nor $X_{i_2}$ are covered by $\BFM'$ if they are covered by $\BFM$. 
Every assignment $(X_{i_1},X_{i_2}) = (\alpha, \beta)$ is covered in $\BFM'$ due to the corresponding assignment 
$(X_{i_1},X_{i_2}) = (\beta, \alpha)$ being covered in $\BFM$. 
Finally, for $i_A \in \{i_1, i_2\}$ (i.e., one of the indices among $i_1$ and $i_2$), $i_B \in \{i_1, i_2\} \setminus \{ i \}$ (i.e., the other index), and $j \notin \{i_1, i_2\}$,  
every assignment $(X_i,X_j) = (\alpha, \beta)$ is covered in $\BFM'$ due to assignment 
$(X_{\bar{i}},X_j) = (\alpha, \beta)$ being covered in $\BFM$.
Hence, swapping any two columns yields another covering array.
\qed
\end{proof}

With Proposition~\ref{prop:multiplicity}, the columns of an unconstrained covering array are interchangeable. 
Along with Proposition~\ref{prop:alternate_complement}, that implies that every covering array on $k$ parameters belongs to an equivalence class of $2^k k!$ covering arrays upon column complement and column swapping. 
To the extent that we can retain part of this flexibility even after considering pairwise constraints breaking such symmetry, 
we can simplify the problem of finding an optimal covering array. 
That will be particularly helpful in Section~\ref{sec:milp}.

\subsection{Even Covering Arrays: The Special Case}\label{sec:even}

When we have an even number of rows, and thus each column can have as many zeros as ones, 
we often observe optimal covering arrays that reflect that. 
For example, matrix $\BFM^{(1)}$ has 3 zeros and 3 ones in every column.
In fact, we can rely on the following sufficiency condition when the number of rows $N$ is even:

\begin{lemma}[Balanced Covers] \label{lem:balanced}
Let $k, N \in \mathbb{Z}_+$ with $N$ even. 
Any matrix $\BFM \in \{0, 1\}^{N \times k}$ in which each column $\BFM_i$ is (a) unique ($\BFM_i \neq \BFM_j ~\forall j \neq i$); (b) not the complement of another column ($\BFM_i \neq \BFone - \BFM_j, \BFone = \{1\}^N ~\forall j \neq i$); and (c) balanced (has $\frac{N}{2}$ zeros and $\frac{N}{2}$ ones) is such that $\BFM \in \CA(N;2,k,2)$.
\end{lemma}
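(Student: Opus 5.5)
Fix two distinct columns $\BFM_i$ and $\BFM_j$. For each $(\alpha,\beta) \in \{0,1\}^2$, let $n_{\alpha\beta}$ be the number of rows $r$ with $(\BFM_{ri}, \BFM_{rj}) = (\alpha,\beta)$. The plan is to show that $n_{\alpha\beta} \geq 1$ for all four assignments, so that every pairwise assignment on $(X_i,X_j)$ is covered. Since $i$ and $j$ are arbitrary, this gives $\BFM \in \CA(N;2,k,2)$.

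The key step is to express all four counts in terms of one of them using balance (c). Column $\BFM_i$ has $\frac{N}{2}$ ones, so $n_{10} + n_{11} = \frac{N}{2}$. Column $\BFM_j$ has $\frac{N}{2}$ ones, so $n_{01} + n_{11} = \frac{N}{2}$. The four counts sum to $N$. Setting $x := n_{11}$, these relations give
\[
n_{10} = n_{01} = \tfrac{N}{2} - x, \qquad n_{00} = N - x - 2\left(\tfrac{N}{2} - x\right) = x.
\]
So $n_{00} = n_{11} = x$ and $n_{01} = n_{10} = \frac{N}{2} - x$, with $0 \leq x \leq \frac{N}{2}$. It therefore suffices to rule out the two extreme cases $x = 0$ and $x = \frac{N}{2}$.

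If $x = \frac{N}{2}$, then $n_{01} = n_{10} = 0$. Every row then has equal entries in columns $i$ and $j$, so $\BFM_i = \BFM_j$, which contradicts uniqueness (a). If $x = 0$, then $n_{00} = n_{11} = 0$. Every row then has opposite entries, so $\BFM_i = \BFone - \BFM_j$, which contradicts (b). Hence $1 \leq x \leq \frac{N}{2} - 1$, and all four counts are positive.

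No step here is a real obstacle. The only point that needs care is the counting identity showing that balance forces $n_{00} = n_{11}$ and $n_{01} = n_{10}$. This is exactly what lets conditions (a) and (b), which are the converses of Propositions~\ref{prop:unique_cols} and~\ref{prop:not_complement}, become sufficient.
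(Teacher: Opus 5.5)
Your proof is correct and uses essentially the same idea as the paper: balance (c) forces a missing off-diagonal assignment to make the two columns equal, contradicting (a), and a missing diagonal assignment to make them complementary, contradicting (b). The paper argues each case separately by contradiction, combining the one-way row implication with equal counts of $\alpha$'s, whereas your identity $n_{00}=n_{11}=x$, $n_{01}=n_{10}=\frac{N}{2}-x$ handles both cases in one computation.
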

\begin{proof}%[Lemma~\ref{lem:balanced}]
Let us suppose, for contradiction, that there is a matrix $\BFM$ satisfying conditions (a)--(c), 
but such that $\BFM \notin \CA(N;2,k,2)$. 
Hence, there should be columns $\BFM_i$ and $\BFM_j$, $i \neq j$, and a pair of values $(\alpha, \beta) \in \{0, 1\}^2$ such that there is no row in $\BFM$ covering $(X_i, X_j) = (\alpha, \beta)$. 
We show that this is not possible.

If that were the case for $\alpha \neq \beta$, then column $\BFM_i$ having $\alpha$ in a row implies that column $\BFM_j$ has $\alpha$ in that row. 
Since columns $\BFM_i$ and $\BFM_j$ have the same number of zeros and ones due to (c),
then $\BFM_i$ and $\BFM_j$ would have $\alpha$ in the same rows 
and consequently $\beta$ in the same rows. 
Thus, $\BFM_i = \BFM_j$, contradicting (a). 

If that were the case for $\alpha = \beta$, then column $\BFM_i$ having $\alpha$ in a row implies that column $\BFM_j$ has $1 - \alpha$ in that row.
Since $\BFM_i$ and $\BFM_j$ have as many zeros as ones due to (c), 
then column $\BFM_i$ would have $\alpha$ in the same rows that column $\BFM_j$ would have $1 - \alpha$ and vice versa. 
Thus, $\BFM_i = \BFone - \BFM_j$, contradicting (b).
\qed
\end{proof}

These results outline a simple algorithm that does not require the conventional use of a row of ones for constructing covering arrays for a given choice of $k$ parameters using an even number of $N$ tests, which is based on three rules:
\begin{enumerate}
    \item Generate columns without repetition (Proposition~\ref{prop:unique_cols}).
    \item Do not generate the complement of a generated column (Propositon~\ref{prop:not_complement}). 
    \item Only generate columns with equal number of zeros and ones (Lemma~\ref{lem:balanced}).
\end{enumerate}
In fact, this algorithm produces an optimal solution for the smallest possible $N$, 
if such an $N$ is even, 
given that  $\kmax(N) = \binom{N-1}{\lceil N/2 \rceil}$ (see Section~\ref{sec:background}): 

\begin{corollary}[Even Balanced Optimality]\label{cor:even_optimality}
Let $k \in \mathbb{Z}_+$. 
If $N := \CAN(2,k,2)$ is even, we obtain a matrix in $\CA^*(2,k,2)$ by following rules 1, 2, and 3.
\end{corollary}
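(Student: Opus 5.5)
Since $N := \CAN(2,k,2)$, the argument splits into two tasks. First, show that rules 1, 2, and 3 can be followed for $k$ columns, i.e.\ that there are at least $k$ balanced columns of length $N$, pairwise distinct and with no two complementary. Second, invoke Lemma~\ref{lem:balanced} to conclude that the resulting matrix lies in $\CA(N;2,k,2)$. Because $N$ equals $\CAN(2,k,2)$, any such matrix is automatically in $\CAS(2,k,2)$. So optimality comes for free, and the only substantive work is the counting in the first task.

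For the count, take $N$ even and let $\mathcal{S}$ be the set of vectors in $\{0,1\}^N$ with exactly $N/2$ ones, so $|\mathcal{S}| = \binom{N}{N/2}$. Complementation maps $\mathcal{S}$ to itself and has no fixed points, since a vector cannot equal its own complement when $N \geq 1$. Hence $\mathcal{S}$ splits into $\frac{1}{2}\binom{N}{N/2}$ complementary pairs, and choosing one vector from each pair gives $\frac{1}{2}\binom{N}{N/2}$ admissible columns. Pascal's rule and symmetry give
\[
\tfrac{1}{2}\binom{N}{N/2} = \tfrac{1}{2}\left(\binom{N-1}{N/2-1} + \binom{N-1}{N/2}\right) = \binom{N-1}{N/2} = \binom{N-1}{\lceil N/2 \rceil} = \kmax(N),
\]
where $\lceil N/2\rceil = N/2$ because $N$ is even. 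By the characterization of $\CAN(2,k,2)$ recalled in Section~\ref{sec:background}, $k \leq \kmax(N)$. Therefore we can pick $k$ columns, one from each of $k$ distinct complementary pairs. These columns satisfy rule 3 by construction, rule 1 because they come from different pairs, and rule 2 because no two of them belong to the same pair. It does not matter which $k$ pairs are chosen, or in what order, so a generation procedure following the rules never gets stuck before $k$ columns are produced.

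Finally, the constructed matrix satisfies conditions (a)--(c) of Lemma~\ref{lem:balanced}, so it belongs to $\CA(N;2,k,2)$, and with $N = \CAN(2,k,2)$ it belongs to $\CAS(2,k,2)$. The one step that needs care is the counting identity $\frac{1}{2}\binom{N}{N/2} = \kmax(N)$. It depends on complementation being fixed-point-free on $\mathcal{S}$ and on the parity of $N$ making $\lceil N/2\rceil = N/2$. Beyond that, the corollary reduces to the lemma.
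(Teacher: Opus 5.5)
Your proof is correct and follows the paper's argument: count the $\binom{N}{N/2}$ balanced columns, halve for complementary pairs to get $\binom{N-1}{N/2} = \kmax(N) \geq k$, and conclude via Lemma~\ref{lem:balanced}. You are more explicit than the paper, which leaves implicit the appeal to Lemma~\ref{lem:balanced}, the inequality $k \leq \kmax(N)$, and the fact that complementation has no fixed points, and which proves the identity by factorial algebra rather than Pascal's rule.
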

\begin{proof}%[Corollary~\ref{cor:even_optimality}]
The number of binary columns with $N/2$ ones is $\binom{N}{N/2}$. 
If we deduct the complement of each column, we have $\frac{1}{2} \binom{N}{N/2} = \frac{1}{2} \frac{N!}{N/2! N/2!}= \frac{1}{2} \frac{N (N-1)!}{N/2 (N/2-1)! N/2!} = \frac{(N-1)!}{(N/2-1)! N/2!} = \binom{N-1}{N/2} = \kmax(N)$ columns.
\qed
\end{proof}

\subsection{Odd Covering Arrays: The General Case}\label{sec:odd}

In the case of $N$ odd, working with the most balanced columns would amount to considering columns with $\lfloor N/2 \rfloor$ ones and $\lceil N/2 \rceil$ zeros and vice versa. 
For example, consider the two previous matrices: $\BFM^{(2)}$ has 2 ones and 3 zeros in every column; whereas $\BFM^{(3)}$ has the last column with 3 ones and 2 zeros.

Since to every column with $\lfloor N/2 \rfloor$ ones and $\lceil N/2 \rceil$ zeros 
there is a complement with $\lceil N/2 \rceil$ ones and $\lfloor N/2 \rfloor$ zeros, 
then by Proposition~\ref{prop:alternate_complement} we can use each of such pairs of columns interchangeably. 
Without loss of generality, 
we make the following assumption while discussing the upcoming  Propositions~\ref{prop:gen_zero_cover} to \ref{prop:one_row}  
to make it easier to understand the structure of covering arrays with $N$ odd:

\begin{assumption}\label{ass:odd}
For $N$ odd, \emph{balanced columns} those with $\lfloor N/2 \rfloor$ ones and $\lceil N/2 \rceil$ zeros.
\end{assumption}

We start by proving that any matrix with distinct columns of $\lfloor N/2 \rfloor$ ones and $\lceil N/2 \rceil$ zeros covers $(X_i, X_j) = (0,0)$, $(X_i, X_j) = (0,1)$, and $(X_i, X_j) = (1,0)$ 
with the following results. 
Please note that they also hold for $N$ even, 
which is a convenient generalization for reusing these results in the constrained case:

\begin{proposition}[Generalized Zero-Zero Cover]\label{prop:gen_zero_cover}
Let $k, N \in \mathbb{Z}_+$. 
Any matrix $\BFM \in \{0, 1\}^{N \times k}$ in which each column (a) is unique; (b) is not the complement of another column; and (c) has at most $\lfloor N/2 \rfloor$ ones (resp. at least $\lceil N/2 \rceil$ zeros) is such that $(X_i, X_j) = (0,0)$ is covered in least one row for any $(i, j) \in \{1, \ldots, k\}^2$. 
\end{proposition}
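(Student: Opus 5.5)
The plan is a counting argument on the supports of the two columns; it only uses conditions (b) and (c). Fix $i \neq j$ and suppose, for contradiction, that no row of $\BFM$ has $(X_i, X_j) = (0,0)$. Then every row has a one in $\BFM_i$ or in $\BFM_j$. So the sets of rows where $\BFM_i$ and $\BFM_j$ equal one together cover all $N$ rows. Writing $o_i$ and $o_j$ for the numbers of ones in the two columns, this gives $o_i + o_j \geq N$. By (c), $o_i + o_j \leq 2\lfloor N/2 \rfloor$.

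I would then split on the parity of $N$.
\begin{itemize}
\item If $N$ is odd, then $2\lfloor N/2 \rfloor = N - 1 < N$, which is an immediate contradiction.
\item If $N$ is even, the two inequalities force $o_i = o_j = N/2$ and $o_i + o_j = N$. Since the two sets of one-rows cover all $N$ rows and their sizes add up to exactly $N$, they must be disjoint. Hence every row has exactly one of $\BFM_i, \BFM_j$ equal to one, that is, $\BFM_i = \BFone - \BFM_j$. This contradicts (b), and the argument mirrors the $\alpha = \beta$ case in the proof of Lemma~\ref{lem:balanced}.
\end{itemize}

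The statement quantifies over all $(i,j) \in \{1,\ldots,k\}^2$, so the diagonal case $i = j$ also needs a line. There the claim is just that $\BFM_i$ contains a zero, which follows from (c) because $\BFM_i$ has at least $\lceil N/2 \rceil \geq 1$ zeros for $N \geq 1$.

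The main obstacle is the even-$N$ tight case. Pure counting does not give a contradiction there, and one has to argue that the two supports partition the rows, so that the complement condition (b) is what closes the argument. Uniqueness (a) is not needed for $(0,0)$; I expect it is included in the hypotheses because the companion results for $(0,1)$ and $(1,0)$ will rely on it.
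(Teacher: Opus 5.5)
Your proof is correct and follows essentially the same route as the paper. Both arguments bound the number of rows containing a one in either column by $2\lfloor N/2\rfloor$, which gives an immediate contradiction for odd $N$, and resolve the tight even case through the complement condition. The paper does the latter by invoking Lemma~\ref{lem:balanced} on the two balanced columns, while you inline that lemma's $\alpha=\beta$ argument and also handle the diagonal case $i=j$, which is slightly more careful than the paper's wording.
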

\begin{proof}%[Proposition~\ref{prop:gen_zero_cover}]
Each column having at most $\lfloor N/2 \rfloor$ ones implies that there are at most $2 \lfloor N/2 \rfloor$ rows with ones in either $\BFM_i$ or $\BFM_j$. Since  $2 \lfloor N/2 \rfloor = N$ only for $N$ even and with both columns balanced, 
then the conditions of Lemma~\ref{lem:balanced} apply in such case. 
Otherwise, if $2 \lfloor N/2 \rfloor < N$, 
then at least one row has a pair of zeros. 
\qed
\end{proof}

\begin{proposition}[Generalized Zero-One Cover]\label{prop:gen_zero_one_cover}
Let $k, N \in \mathbb{Z}_+$. 
A matrix $\BFM \in \{0, 1\}^{N \times k}$ covers $(X_i, X_j) = (0,1)$ in at least one row for any $(i, j) \in \{1, \ldots, k\}^2, i \neq j$ if, and only if, 
column $\BFM_j$ is not a subset of $\BFM_i$ ($\BFM_j \nleq \BFM_i$).
\end{proposition}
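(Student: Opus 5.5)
The plan is to prove the equivalence directly from the definitions, viewing each column $\BFM_i \in \{0,1\}^N$ as the indicator vector of the set of rows where $X_i = 1$. Under this view, $\BFM_j \leq \BFM_i$ (componentwise) means exactly that the support of $\BFM_j$ is contained in the support of $\BFM_i$. I will translate ``the pair $(X_i,X_j)=(0,1)$ is covered'' into a statement about a single row index $r$ and then match it to the failure of componentwise domination.

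First, fix $i \neq j$ and note that, by definition, $(X_i,X_j)=(0,1)$ is covered if and only if there is a row $r \in \{1,\ldots,N\}$ with $\BFM_{ri}=0$ and $\BFM_{rj}=1$. For entries in $\{0,1\}$, the condition $\BFM_{ri}=0 \wedge \BFM_{rj}=1$ is equivalent to $\BFM_{rj} > \BFM_{ri}$. Indeed, the only pair $(a,b)\in\{0,1\}^2$ with $b>a$ is $(a,b)=(0,1)$.

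Second, I observe that $\BFM_j \nleq \BFM_i$ holds precisely when there exists some $r$ with $\BFM_{rj} > \BFM_{ri}$, which is the negation of $\BFM_{rj} \leq \BFM_{ri}$ for all $r$. Chaining the two equivalences gives both directions at once. For ($\Leftarrow$), a witness row for non-domination is a row covering $(0,1)$. For ($\Rightarrow$), a covering row witnesses that $\BFM_j$ is not dominated by $\BFM_i$.

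There is no real obstacle here. The only point needing care is to fix the meaning of ``subset'' as componentwise order, equivalently support inclusion, and to handle the roles of $i$ and $j$ consistently. In particular, $(0,1)$ refers to $X_i=0$ and $X_j=1$, so the relevant relation is $\BFM_j \leq \BFM_i$ and not the reverse. By symmetry, the same argument with the roles of $i$ and $j$ swapped yields the analogous statement for $(1,0)$ coverage, namely that it holds if and only if $\BFM_i \nleq \BFM_j$. I would remark on this for later use alongside Proposition~\ref{prop:gen_zero_cover}.
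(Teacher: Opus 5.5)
Your proof is correct and takes essentially the same approach as the paper: both identify covering $(X_i,X_j)=(0,1)$ with the existence of a row where the entry of $\BFM_j$ exceeds that of $\BFM_i$. That is exactly the negation of $\BFM_j \leq \BFM_i$ read as support inclusion, and your remark that $(0,1)$ is the only pair in $\{0,1\}^2$ with $b>a$ makes explicit a step the paper only states in words.
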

\begin{proof}%[Proposition~\ref{prop:gen_zero_one_cover}]
If $\BFM_j \nleq \BFM_i$, 
then is at least one row in which the value assigned to $X_j$ is greater than the value assigned to $X_i$, 
which thus covers $(X_i, X_j) = (0, 1)$.

Conversely, if $\BFM$ covers $(X_i, X_j) = (0, 1)$, then the rows with ones in column $\BFM_j$ are not a subset of the rows with ones in $\BFM_i$, and thus $\BFM_j \nleq \BFM_i$.
\qed
\end{proof}

In other words, 
by assuming that balanced columns may have more zeros than ones,  
we have made zeros slightly more abundant. 
Conveniently, 
that entails that all pairwise assignments involving the value zero are covered. 
Hence, 
we are left to discuss how to cover pairs of ones, given that ones are less abundant.

On the one hand, 
if we adapt the third rule in Section~\ref{sec:even} to handle $N$ odd by assuming as balanced columns those with $\lfloor N / 2 \rfloor$ ones,  
we may not always find an optimal solution for the smallest $N$. 
For example, consider again matrix $\BFM^{(2)}$, repeated below for convenience, along with another matrix $\BFM^{(4)}$:
\[
\arraycolsep6pt
\BFM^{(2)} = \begin{pmatrix} 1 & 1 & 1 & 1 \\ 0 & 0 & 0 & 1 \\ 0 & 0 & 1 & 0 \\ 0 & 1 & 0 & 0 \\ 1 & 0 & 0 & 0 \end{pmatrix},~ 
\BFM^{(4)} = \begin{pmatrix} 0 & 0 & 0 \\ 0 & 0 & 0 \\ 0 & 1 & 1 \\ 1 & 0 & 1 \\ 1 & 1 & 0 \end{pmatrix}.
\] 
We cover 4 parameters with the leftmost matrix $\BFM^{(2)} \in \CA(5;2,4,2)$, 
and thus we cover 3 parameters with any submatrix of $\BFM^{(2)}$ on three columns. 
We also cover 
3 parameters with the rightmost matrix $\BFM^{(4)} \in \CA(5;2,3,2)$. 
However, there is no column that could be added to $\BFM^{(4)}$ to cover 4 parameters. 
Therefore, depending on which columns we choose first, 
we may not be able to extend a covering array on $k$ parameters and $N$ tests all the way to $\kmax(N)$ parameters. 

On the other hand, based on Propositions~\ref{prop:gen_zero_cover} and \ref{prop:gen_zero_one_cover}, our only concern is about covering $(X_i, X_j) = (1, 1)$. 
This can be achieved by having a row of ones, as in the construction used for $\BFM^{(1)}$ and $\BFM^{(2)}$, 
implying a simple sufficient result:

\begin{proposition}[One-Row Sufficiency]\label{prop:one_row}
Let $k, N \in \mathbb{Z}_+$. 
Any matrix $\BFM \in \{0, 1\}^{N \times k}$ in which each column is (a) unique and (b) has $\lfloor N/2 \rfloor$ ones and $\lceil N/2 \rceil$ zeros; and (c) one row has $k$ ones is such that $\BFM \in \CA(N;2,k,2)$. 
\end{proposition}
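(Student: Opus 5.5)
The plan is to check the four assignments $(\alpha,\beta)\in\{0,1\}^2$ for an arbitrary pair of columns $\BFM_i,\BFM_j$ with $i\neq j$. I will reuse Propositions~\ref{prop:gen_zero_cover} and \ref{prop:gen_zero_one_cover} where they apply directly, and let condition (c) handle the one case they leave open. Note first that (b) and (c) together force $\lfloor N/2\rfloor \geq 1$, since column $i$ has a one in the all-ones row. Hence $N\geq 2$ whenever $k\geq 1$, and the degenerate case is vacuous.

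The pair $(1,1)$ is immediate: the row of $k$ ones given by (c) covers $(X_i,X_j)=(1,1)$ for every pair.

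For $(0,1)$ and $(1,0)$ I invoke Proposition~\ref{prop:gen_zero_one_cover}. It suffices to show $\BFM_j \nleq \BFM_i$ and $\BFM_i \nleq \BFM_j$. By (b), both columns have exactly $\lfloor N/2\rfloor$ ones. If $\BFM_j \leq \BFM_i$, the support of $\BFM_j$ would be a subset of the support of $\BFM_i$ with the same cardinality, so the supports are equal and $\BFM_i=\BFM_j$. This contradicts (a). The reverse inclusion is excluded the same way.

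The only real subtlety is $(0,0)$, because Proposition~\ref{prop:gen_zero_cover} requires the columns not to be complements, and the statement does not assume this. I will argue directly instead of citing that proposition. The rows containing a one in $\BFM_i$ or $\BFM_j$ number at most $2\lfloor N/2\rfloor$. However, both columns contain a one in the all-ones row, so this row is counted twice. The union of the two supports therefore has size at most $2\lfloor N/2\rfloor - 1 \leq N-1 < N$. Consequently some row has zeros in both columns and covers $(0,0)$.

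Equivalently, the all-ones row shows that no two columns are complements, since they agree in that row. With that observation, Proposition~\ref{prop:gen_zero_cover} applies verbatim to the $(0,0)$ case. Combining the four cases gives $\BFM \in \CA(N;2,k,2)$.
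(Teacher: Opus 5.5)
Your proof is correct and follows the paper's route: the all-ones row covers $(1,1)$, and Propositions~\ref{prop:gen_zero_cover} and \ref{prop:gen_zero_one_cover} handle the remaining assignments. The paper's one-line proof leaves two steps implicit, and you supply both: that the all-ones row rules out complementary columns (needed to apply Proposition~\ref{prop:gen_zero_cover} when $N$ is even), and that equal column weight plus uniqueness rules out containment.
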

\begin{proof}%[Proposition~\ref{prop:one_row}]
Condition (c) implies covering all assignments of form $(X_i, X_j) = (1,1)$, and (a)--(b) imply covering other assignments due to Propositions~\ref{prop:gen_zero_cover} and \ref{prop:gen_zero_one_cover}. 
\qed
\end{proof}

Now we take a step back to drop\footnote{For the sake of generality, we never explicitly referred to columns with $\lfloor N/2 \rfloor$ ones and $\lceil N/2 \rceil$ zeros as balanced in Propositions~\ref{prop:gen_zero_cover} to \ref{prop:one_row}. However, we primed the reader to think of those as balanced columns in advance to help communicate the results.} Assumption~\ref{ass:odd} and reframe the result above to consider the general case of mixing columns that have $\lfloor N/2 \rfloor$ ones and $\lceil N/2 \rceil$ zeros 
with columns that have $\lceil N/2 \rceil$ ones and $\lfloor N/2 \rfloor$ zeros:

\begin{corollary}[Least-Row Sufficiency]\label{cor:least_row}
Let $k, N \in \mathbb{Z}_+$. 
Any matrix $\BFM \in \{0, 1\}^{N \times k}$ in which each column (a) is unique; (b) is not the complement of another column; (c) has at least $\lfloor N/2 \rfloor$ ones and $\lfloor N/2 \rfloor$ zeros; and (d) has a row with the least frequent element of each column is such that $\BFM \in \CA(N;2,k,2)$. 
\end{corollary}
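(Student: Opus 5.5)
The plan is to reduce Corollary~\ref{cor:least_row} to Proposition~\ref{prop:one_row}, or for $N$ even to Lemma~\ref{lem:balanced}, by complementing columns. Proposition~\ref{prop:alternate_complement} says complementation preserves membership in $\CA(N;2,k,2)$. That proposition is stated for covering arrays, but the same argument shows more: a matrix is a covering array if and only if the matrix with some columns complemented is one. The reason is that complementing column $i$ only relabels which value of $X_i$ is covered by each row, and complementation is an involution. So it suffices to prove the claim for a suitably normalized $\BFM'$.

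\textbf{Case $N$ even.} Condition (c), at least $N/2$ ones and at least $N/2$ zeros, forces every column to have exactly $N/2$ of each. Conditions (a) and (b) then give precisely the hypotheses of Lemma~\ref{lem:balanced}, so $\BFM \in \CA(N;2,k,2)$ directly. Condition (d) is not needed here; the least frequent element is ambiguous in this case anyway.

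\textbf{Case $N$ odd.} Condition (c) means each column has either $\lfloor N/2 \rfloor$ ones and $\lceil N/2 \rceil$ zeros, or the reverse.
\begin{enumerate}
\item Form $\BFM'$ by complementing exactly those columns whose least frequent element is $0$. Every column of $\BFM'$ then has $\lfloor N/2 \rfloor$ ones and $\lceil N/2 \rceil$ zeros.
\item Check condition (a) for $\BFM'$: its columns are unique. Two columns equal in $\BFM'$ come from columns of $\BFM$ that are either equal, or complements of each other if exactly one of them was flipped. The first is excluded by (a) and the second by (b).
\item Check condition (c) of Proposition~\ref{prop:one_row}: the row of $\BFM$ guaranteed by (d) contains, in each column, that column's least frequent element. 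After the flips, that element becomes $1$ in every column, so this row of $\BFM'$ consists of $k$ ones.
\end{enumerate}
Proposition~\ref{prop:one_row} then yields $\BFM' \in \CA(N;2,k,2)$, and undoing the flips gives $\BFM \in \CA(N;2,k,2)$.

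The main point requiring care is step 2. Proposition~\ref{prop:one_row} does not ask for non-complementarity, but uniqueness after flipping depends on hypothesis (b) of the corollary, so the transfer of (a) and (b) through selective complementation must be argued explicitly. The only other subtlety is applying Proposition~\ref{prop:alternate_complement} in the reverse direction, which follows from the involution remark above.
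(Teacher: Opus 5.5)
Your proof is correct and follows the same route as the paper: complement the columns that have more ones than zeros so that Proposition~\ref{prop:one_row} applies, then transfer the conclusion back with Proposition~\ref{prop:alternate_complement}. The paper's two-line proof leaves two details implicit that you make explicit: the separate treatment of even $N$ via Lemma~\ref{lem:balanced}, where the ``least frequent element'' in (d) is ambiguous, and the use of (b) to keep the flipped columns distinct.
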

\begin{proof}
By taking the complement of each column with more ones than zeros, 
we obtain a matrix $\BFM' \in \{0, 1\}^{N \times k}$ satisfying conditions (a), (b), and (c) of Proposition~\ref{prop:one_row}. 
Due to Proposition~\ref{prop:alternate_complement}, it follows that $\BFM \in \CA(N;2,k,2)$.
\end{proof}

In other words, 
we ensure that we have a covering array 
by having a row with the element that is slightly less frequent in each of the columns. 

Because the sufficiency condition from Proposition~\ref{prop:one_row} taps back to the construction by Kleitman and Spencer~\cite{kleitman1973families}, 
the optimality is straightforward when $N = \CAN(2,k,2)$ is odd:  
after deducting the row of ones, 
we are left with $N-1$ rows of which $\lceil N / 2 \rceil$ are zeros, allowing for $\binom{N-1}{\lceil N / 2 \rceil} = \kmax(N)$ distinct columns. 
Nevertheless, the necessary conditions for covering $(X_i, X_j) = (\alpha, \beta)$ when $\alpha \neq \beta$ (Proposition~\ref{prop:gen_zero_one_cover} applied both ways) and the sufficient conditions for when $\alpha = \beta$ (Propositions~\ref{prop:gen_zero_cover} and \ref{prop:one_row}) are helpful to tackle the constrained case.

\subsection{Optimal Covering Arrays with Unbalanced Columns}\label{sec:shadow}

We have seen that there is always an optimal covering array in which all columns are balanced. However, as one would expect, 
there might be other optimal covering arrays; 
at least in some cases. 
For example, consider matrix $\BFM^{(5)}$:
\[
\arraycolsep6pt
\BFM^{(5)} = \begin{pmatrix} 
1 & 1 & 1 & 1 & 1 & 1 & \textbf{1} \\ 
0 & 0 & 0 & 0 & 0 & 0 & \textbf{1} \\ 
0 & 0 & 1 & 0 & 1 & 1 & \textbf{0} \\ 
0 & 1 & 0 & 1 & 0 & 1 & \textbf{0} \\ 
1 & 0 & 0 & 1 & 1 & 0 & \textbf{0} \\ 
1 & 1 & 1 & 0 & 0 & 0 & \textbf{0} 
\end{pmatrix}.
\] 
Matrix $\BFM^{(5)}$ is a covering array for 7 parameters, a case that requires at least 6 tests, and thus $\BFM^{(5)}$ is optimal. You may notice that the last column of $\BFM^{(5)}$, in bold, is not balanced. 
However, it is not always the case that we can find covering arrays with unbalanced columns for any number of parameters. 

We will use this example to provide a brief but more general intuition for the role of balanced columns in optimal covering arrays. 
First, notice that matrix $\BFM^{(5)}$ can be obtained from $\BFM^{(1)}$. We begin by changing the third element of the last column of $\BFM^{(1)}$, hence replacing column $[1 ~ 1 ~ 1 ~ 0 ~ 0 ~ 0]^T$ with $[1 ~ 1 ~ \textbf{0} ~ 0 ~ 0 ~ 0]^T$. 
Because only the first two elements in the last column are one, 
we must necessarily have both a zero and a one in the first two rows of all other columns to have a covering array. 
Hence, we must drop columns 4, 7, and 9. Starting from $\BFM^{(1)}$, we show below the changed element in the last column in bold and the removed columns in gray, so that the columns in black correspond to $\BFM^{(5)}$:
\[
\arraycolsep6pt
\begin{pmatrix} 
1 & 1 & 1 & {\color{gray} 1} & 1 & 1 & {\color{gray} 1} & 1 & {\color{gray} 1} & 1 \\ 
0 & 0 & 0 & {\color{gray} 1} & 0 & 0 & {\color{gray} 1} & 0 & {\color{gray} 1} & 1 \\ 
0 & 0 & 1 & {\color{gray} 0} & 0 & 1 & {\color{gray} 0} & 1 & {\color{gray} 0} & \textbf{0} \\ 
0 & 1 & 0 & {\color{gray} 0} & 1 & 0 & {\color{gray} 0} & 1 & {\color{gray} 1} & 0 \\ 
1 & 0 & 0 & {\color{gray} 0} & 1 & 1 & {\color{gray} 1} & 0 & {\color{gray} 0} & 0 \\ 
1 & 1 & 1 & {\color{gray} 1} & 0 & 0 & {\color{gray} 0} & 0 & {\color{gray} 0} & 0 
\end{pmatrix}.
\] 

Second, consider how the last column prevents us from adding any other column to $\BFM^{(5)}$ and still obtain a valid covering array. 
As we have seen above, a new column would necessarily need at least a zero and a one in the first two rows. 
Without loss of generality, we may assume that the value one is in the first row and the value zero in the second row, since we can otherwise take the complement by Preposition~\ref{prop:alternate_complement}.  
Hence, any new column would necessarily have one in the first row and zero in the second row. 
However, 
a new column could not have two ones in the remaining rows since otherwise it would be identical to one of the existing columns, 
contradicting either Proposition~\ref{prop:unique_cols} or that we have a valid covering array. 
Likewise, 
a new column with more or less than two ones would imply a containment ($\leq$) relationship with one of the existing columns, 
contradicting either Proposition~\ref{prop:gen_zero_one_cover} or that we have a valid covering array. 

More generally, 
using an unbalanced column with fewer (or more) ones in a covering array 
prevents us from using other columns that are supersets (or subsets) of that column. 
When it comes to balanced columns, 
that means trading many balanced columns for an unbalanced one. 
For example, 
by using the unbalanced column $[1 ~ 1 ~ 0 ~ 0 ~ 0 ~ 0]^T$, 
we could not use columns $[1 ~ 1 ~ 0 ~ 0 ~ 0 ~ 1]^T$, $[1 ~ 1 ~ 0 ~ 0 ~ 1 ~ 0]^T$, $[1 ~ 1 ~ 0 ~ 1 ~ 0 ~ 0]^T$, $[1 ~ 1 ~ 1 ~ 0 ~ 0 ~ 0]^T$, and their complements.
Hence, while it is possible that some optimal covering arrays with $N$ tests have unbalanced columns, 
that possibility vanishes as the number of parameters approaches $\kmax(N)$.

\section{Optimization with Pairwise Constraints}\label{sec:pairwise}

So far we have assumed that all pairs of assignments must be covered. 
But if there is a forbidden assignment that must be avoided, say $(X_i, X_j) \neq (\alpha, \beta)$, 
then we need to adapt the constructions discussed in Section~\ref{sec:unconstrained} accordingly. 
In this case, we must cover all but the pairs of assignments which are forbidden.

For example, what if we have $k = 5$ parameters subject to $(X_4, X_5) \neq (1, 0)$?
With $N = 6$ rows as in the unconstrained case, we should not use balanced columns for both $X_4$ and $X_5$, 
since otherwise we would automatically cover $(X_4, X_5) = (1, 0)$. 
The solution changes even more if we add more constraints, such as $(X_3, X_5) \neq (1, 0)$ and $(X_2, X_5) \neq (1, 0)$. 
Consider matrices $\BFM^{(6)}$ to $\BFM^{(9)}$:
\[
\begingroup
\setlength\arraycolsep{1pt}
\BFM^{(6)}{=}\begin{pmatrix} 
1 & 1 & 1 & 1 & 1 \\ 
1 & 0 & 0 & 0 & 0 \\ 
0 & 1 & 0 & 0 & 0 \\ 
0 & 0 & 1 & 0 & 1 \\ 
0 & 0 & 0 & 1 & 1 \\ 
1 & 1 & 1 & 1 & 0 
\end{pmatrix}, 
\BFM^{(7)}{=}\begin{pmatrix} 
1 & 1 & 1 & 1 & 1 \\ 
1 & 0 & 0 & 0 & 0 \\ 
0 & 1 & 0 & 0 & 0 \\ 
0 & 0 & 1 & 0 & 1 \\ 
0 & 0 & 0 & 1 & 1 \\ 
1 & 1 & 1 & \text{\textbf{0}} & 0
\end{pmatrix},  
\BFM^{(8)}{=}\begin{pmatrix} 
1 & 1 & 1 & 1 & 1 \\ 
1 & 0 & 0 & 0 & 0 \\ 
0 & 1 & 0 & 0 & 0 \\ 
0 & 0 & 1 & 0 & 1 \\ 
0 & 0 & 0 & 1 & 1
\end{pmatrix},  
\BFM^{(9)}{=}\begin{pmatrix} 
1 & 1 & 1 & 1 & 1 \\ 
1 & 0 & 0 & 0 & 0 \\ 
0 & 1 & 0 & \text{\textbf{1}} & \text{\textbf{1}} \\ 
0 & 0 & 1 & \text{\textbf{1}} & 1 \\ 
0 & 0 & 0 & \text{\textbf{0}} & \text{\textbf{0}} \\ 
1 & 1 & 1 & \text{\textbf{0}} & \text{\textbf{1}} 
\end{pmatrix}{.}
\endgroup
\]
Matrix $\BFM^{(6)}$ is an optimal unconstrained covering array with $k=5$. 
In turn, matrix~$\BFM^{(7)}$ is an optimal\footnote{The claim that the solutions presented for the constrained case are optimal have been validated with either the theory or the algorithms discussed later in the paper.} covering array subject to $(X_4, X_5) \neq (1, 0)$. 
By also adding $(X_3, X_5) \neq (1, 0)$, 
matrix~$\BFM^{(8)}$ is an optimal covering array with surprisingly fewer tests than before. 
By further adding $(X_2, X_5) \neq (1, 0)$, 
matrix~$\BFM^{(9)}$ is now an optimal covering array and 
the number of tests goes back to six. 
We put in bold the elements in matrices~$\BFM^{(7)}$ to $\BFM^{(9)}$ that differ from $\BFM^{(6)}$. 

Those examples highlight important distinctions from the unconstrained case, 
which we enumerate below. They form the basis for the remainder of the paper:

\begin{enumerate}[(i)]
\item The columns are not---and cannot---be all balanced: 
the fourth column of $\BFM^{(5)}$ has more zeros than ones, 
and the fifth column of both $\BFM^{(6)}$ and $\BFM^{(7)}$ have more ones than zeros. 
We present a same underlying principle for both cases in Section~\ref{sec:necessary}. 

\item 
The minimum number of tests may increase (as from $\BFM^{(8)}$ to $\BFM^{(9)}$) or decrease (as from $\BFM^{(7)}$ to $\BFM^{(8)}$) as we add more pairwise constraints. 
(But how is that even possible?) 
We discuss the factors involved in Section~\ref{sec:lb}.

\item The optimal covering arrays still seem to have as many balanced---or closer to balanced---columns as possible, save some notable exceptions. 
We discuss circumstances which may exclude some or all columns from being balanced in optimal covering arrays if their parameters are subject to pairwise constraints in Section~\ref{sec:cliques}. 
In contrast, 
we state a conjecture based on the observation that it is apparently possible to find valid covering arrays having balanced columns for unconstrained parameters in Section~\ref{sec:conjecture}.

\item
The column chosen for one parameter may further affect the choice for another parameter if either needs an unbalanced column. 
For example, the fourth column of $\BFM^{(9)}$ is a balanced column,
but it is not the same balanced column used in $\BFM^{(6)}$ because that column is not compatible with the fifth column of $\BFM^{(9)}$. 
We return to this discussion in Section~\ref{sec:milp}.

\end{enumerate}

\subsection{Necessary Conditions}\label{sec:necessary}

We present the following result as a unifying principle for the number of zeros and ones differing slightly across columns of matrices $\BFM^{(6)}$ to $\BFM^{(9)}$: 
\begin{lemma}[$1$--$0$ Constraint]\label{lem:xi_1_xj_0}
If a reduced constrained pairwise covering problem\footnote{We remind the reader of Definition~\ref{def:reduced} in Section~\ref{sec:background} for our assumptions in this work.} is subject to 
$(X_i, X_j) \neq (1,0)$, 
then in any valid covering array $\BFM \in \{0,1\}^{N \times k}$ 
the column $\BFM_i$  has more zeros---and less ones---than $\BFM_j$.
\end{lemma}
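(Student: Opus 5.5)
The plan is a direct containment argument on the supports of the two columns. First I will read the constraint $(X_i, X_j) \neq (1,0)$ row by row: it says that no row $r$ of $\BFM$ has $\BFM_{ri} = 1$ and $\BFM_{rj} = 0$. Equivalently, every row with a one in column $\BFM_i$ also has a one in column $\BFM_j$. Thus $\BFM_i \leq \BFM_j$ componentwise, which is the containment relation already used in Proposition~\ref{prop:gen_zero_one_cover}. Counting ones then gives that $\BFM_i$ has at most as many ones, and hence at least as many zeros, as $\BFM_j$.

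The remaining step is to make the inequality strict, and this is where the reduced-problem hypothesis of Definition~\ref{def:reduced} enters. Suppose, for contradiction, that $\BFM_i$ and $\BFM_j$ have the same number of ones. Together with $\BFM_i \leq \BFM_j$, this forces $\BFM_i = \BFM_j$. In that case the valid covering array never covers $(X_i,X_j) = (0,1)$ either. Note that this is not automatically a contradiction, because a constrained problem only requires covering assignments that are not forbidden.

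The key observation is that equal columns make $(X_i,X_j)=(0,1)$ impossible in this array, but that alone does not make it forbidden by the problem. Validity of $\BFM$ for the stated problem demands that every non-forbidden assignment appear in some row. Since the problem is reduced, there is at most one constraint on the pair $(i,j)$, explicit or implied. That constraint is $(1,0)$, so $(0,1)$ is not forbidden and must therefore be covered, contradicting $\BFM_i = \BFM_j$. I expect the main obstacle to be stating this step cleanly: one must argue that the requirement to cover $(0,1)$ comes from the problem definition (all non-forbidden pairs must be covered), not from the array itself.

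For completeness, I will also note that $\BFM_i$ cannot be all zeros and $\BFM_j$ cannot be all ones, since fixed values are excluded. This is not needed for strictness but shows the statement is non-vacuous. The conclusion is that $\BFM_i \leq \BFM_j$ with $\BFM_i \neq \BFM_j$, so $\BFM_i$ has strictly fewer ones and strictly more zeros than $\BFM_j$.
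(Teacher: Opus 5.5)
Your proposal is correct and follows essentially the same argument as the paper: the forbidden assignment $(1,0)$ gives the containment $X_i = 1 \rightarrow X_j = 1$ (so $\BFM_i \leq \BFM_j$), and the required coverage of $(0,1)$, which reducedness guarantees is not forbidden, makes the inequality strict. The only difference is cosmetic: you get strictness by contradiction through $\BFM_i = \BFM_j$, whereas the paper directly notes that the row covering $(0,1)$ contributes an extra one to $\BFM_j$.
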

\begin{proof}%[Lemma~\ref{lem:xi_1_xj_0}]
If $(X_i, X_j) = (1, 0)$ should not be covered, 
then $X_i=1 \rightarrow X_j=1$ in all tests.
But if $(X_i, X_j) = (0, 1)$ should be covered in some test, 
then $X_j = 1$ occurs more often than $X_i = 1$; 
and, conversely, $X_i = 0$ more often than $X_j = 0$. 
Hence, there are relatively more ones in $\BFM_j$ and therefore more zeros in $\BFM_i$. 
\qed
\end{proof}

In other words, 
if $X_i$ and $X_j$ are subject only to $(X_i, X_j) \neq (1, 0)$, 
any covering array $\BFM$ has more zeros in $\BFM_i$ (e.g., fourth column of $\BFM^{(7)}$) or more ones in $\BFM_j$ (e.g., fifth column of both $\BFM^{(8)}$ and $\BFM^{(9)}$).

Now we generalize Lemma~\ref{lem:xi_1_xj_0} to lay out necessary conditions for pairwise covering subject to pairwise constraints with any pair of values:

\begin{theorem}[$\alpha$--$\beta$ Constraints]\label{cor:xi_alpha_xj_beta}
For $w_i$ and $w_j$ as the number of ones of parameters $X_i$ and $X_j$ in a reduced constrained pairwise covering problem, then:  
\begin{align}%{ccl}
(X_i, X_j) \neq (0,0) ~  & \rightarrow ~  & w_i + w_j \geq N + 1. \label{eq:00} \\
(X_i, X_j) \neq (0,1) ~ & \rightarrow ~ & w_i - w_j \geq 1. \label{eq:01} \\
(X_i, X_j) \neq (1,0) ~ & \rightarrow ~ & w_j - w_i \geq 1. \label{eq:10} \\
(X_i, X_j) \neq (1,1) ~ & \rightarrow ~ & w_i + w_j \leq N - 1. \label{eq:11}
\end{align}
\end{theorem}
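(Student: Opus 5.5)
The plan is to reduce all four implications to Lemma~\ref{lem:xi_1_xj_0} by complementing columns (Proposition~\ref{prop:alternate_complement}), and to prove \eqref{eq:10} directly by counting so that the integer bound is explicit. Let $\BFM$ be a valid covering array with $N$ rows. For \eqref{eq:10}, the forbidden assignment $(1,0)$ means every row with $X_i=1$ also has $X_j=1$. Hence the set of rows with a one in $\BFM_i$ is contained in the set of rows with a one in $\BFM_j$, which gives $w_i \leq w_j$.

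The assignment $(X_i,X_j)=(0,1)$ is not forbidden. Since the problem is reduced (Definition~\ref{def:reduced}), there is exactly one constraint on this pair and no implied second one, so $(0,1)$ must be covered by some row. That row has a one in $\BFM_j$ but not in $\BFM_i$, so the containment is strict and $w_j \geq w_i+1$. This is the integer form of Lemma~\ref{lem:xi_1_xj_0}. Inequality \eqref{eq:01} follows by exchanging the roles of $i$ and $j$.

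For \eqref{eq:00} and \eqref{eq:11}, complement one column and check that the constraint, the required coverage and the reducedness all transform consistently. Replace $\BFM_j$ by $\BFone-\BFM_j$ and the variable $X_j$ by $X'_j=1-X_j$, which has $w'_j=N-w_j$ ones. The constraint $(X_i,X_j)\neq(0,0)$ becomes $(X_i,X'_j)\neq(0,1)$. Each required assignment $(\alpha,\beta)$ with $(\alpha,\beta)\neq(0,0)$ becomes a required assignment $(\alpha,1-\beta)$ for $(X_i,X'_j)$, by the same row-by-row argument as in Proposition~\ref{prop:alternate_complement}. Moreover, no parameter becomes fixed and no extra constraint is introduced, so the transformed problem is still reduced.

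Applying \eqref{eq:01} to the transformed pair gives $w_i-(N-w_j)\geq 1$, that is, $w_i+w_j\geq N+1$, which is \eqref{eq:00}. Likewise, $(X_i,X_j)\neq(1,1)$ becomes $(X_i,X'_j)\neq(1,0)$. Then \eqref{eq:10} gives $(N-w_j)-w_i\geq 1$, which is \eqref{eq:11}.

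The main obstacle is to justify that the non-forbidden assignment in each case must actually be covered. This is exactly where reducedness is used: it rules out any implied second constraint on the same pair, which would otherwise let that assignment be infeasible rather than required. I would state this explicitly and note that complementation preserves it. The remaining steps are routine counting.
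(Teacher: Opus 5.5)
Your proof is correct and follows the same route as the paper: every case is reduced to Lemma~\ref{lem:xi_1_xj_0} by complementing one column. The one cosmetic difference is in \eqref{eq:00}, where you complement $X_j$ and invoke \eqref{eq:01}, whereas the paper complements $X_i$ and applies the lemma directly; your explicit strict-containment count and your remark that complementation preserves reducedness just make the integer bound and the covering requirement more explicit than the paper does.
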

\begin{proof}%[Theorem~\ref{cor:xi_alpha_xj_beta}]
Let $z_i$ be the number of zeros of $X_i$ ($z_i = N - w_i$), 
and $\overline{X}_i$ the complement of $X_i$ with $z_i$ ones and $w_i$ zeros. We proceed case by case as follows. 

In the case of \eqref{eq:00}, $(X_i, X_j) \neq (0, 0) \rightarrow (\overline{X}_i, X_j) \neq (1, 0)$; 
and, by Lemma~\ref{lem:xi_1_xj_0}, $w_j - z_i \geq 1 \rightarrow w_j - (N-w_i) \geq 1 \rightarrow w_i + w_j \geq N + 1$.

In the case of \eqref{eq:01}, we apply Lemma~\ref{lem:xi_1_xj_0} after flipping the places of $X_i$ and $X_j$.

In the case of \eqref{eq:10}, we apply Lemma~\ref{lem:xi_1_xj_0} directly.

In the case of \eqref{eq:11}, similar to the first one, 
$(X_i, X_j) \neq (1, 1) \rightarrow (X_i, \overline{X}_j) \neq (1, 0)$; 
and, by Lemma~\ref{lem:xi_1_xj_0}, $z_j - w_i \geq 1 \rightarrow (N-w_j)-w_i \geq 1 \rightarrow w_i+w_j \leq N-1$. \qed
\end{proof}

\subsection{Optimality Conditions}\label{sec:lb}

On the one hand, forbidding a pair of assignments prevents the combination of certain columns in the solution, 
such as using pairs of balanced columns. 
Because balanced columns can typically be combined in larger number (see discussion in Section~\ref{sec:shadow}),  
we may expect the number of required tests to grow. 

On the other hand, we do not need to cover a pair of assignment that has been forbidden, 
which implicitly eliminates covering constraints that we have taken for granted throughout the paper. 
Sometimes this benefit outweighs the inconvenience of having more pairwise constraints. 
To the best of our knowledge, 
a reduction on the number of tests such as from $\BFM^{(7)}$ to $\BFM^{(8)}$ have only been previously discussed in the context of  nonbinary parameters~\cite{bryce2006prioritized}.

To prove optimality given that the number of tests may decrease, 
we may rely on a smaller coverage problem on parameters not constraining one another:
\begin{lemma}[Lower Bound]\label{lem:lb}
Let $X_1, X_2, \ldots, X_{k'}$ be binary parameters of a reduced constrained pairwise covering problem with $k \geq k'$ parameters, 
for which there are no forbidden pairwise assignments between the first $k'$ parameters. 
Then any valid covering array on $k$ parameters has at least $\CAN(2,k',2)$ tests.
\end{lemma}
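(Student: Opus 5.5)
The plan is to restrict any valid covering array to its first $k'$ columns and check that this submatrix is an unconstrained covering array. Let $\BFM \in \{0,1\}^{N \times k}$ be a valid covering array for the reduced constrained problem, and let $\BFM'$ be the $N \times k'$ submatrix formed by columns $\BFM_1, \ldots, \BFM_{k'}$. I will argue that $\BFM' \in \CA(N;2,k',2)$. Then $N \geq \CAN(2,k',2)$ follows at once, because $\CAN(2,k',2)$ is by definition the least $N$ for which $\CA(N;2,k',2) \neq \emptyset$.

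To show that $\BFM'$ is a covering array, fix distinct $i, j \in \{1, \ldots, k'\}$ and any $(\alpha,\beta) \in \{0,1\}^2$. By hypothesis, no forbidden pairwise assignment exists between the first $k'$ parameters. So $(X_i,X_j) = (\alpha,\beta)$ is not forbidden. In a valid covering array for the constrained problem, every pairwise assignment is either covered by some test or forbidden in all tests. Hence some row of $\BFM$ covers $(X_i,X_j)=(\alpha,\beta)$. That row, restricted to the first $k'$ columns, is a row of $\BFM'$ covering the same assignment. Since this holds for all $i \neq j \le k'$ and all $(\alpha,\beta)$, every submatrix of $\BFM'$ on two columns contains all four pairs, so $\BFM' \in \CA(N;2,k',2)$.

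The only delicate point is the phrase ``no forbidden pairwise assignments,'' which must cover implied constraints as well as explicit ones. Otherwise an assignment could be forbidden implicitly, for instance through a chain such as $(X_1,X_2)\neq(1,0) \wedge (X_2,X_3)\neq(1,0)$, and would then go uncovered. The reduced hypothesis (Definition~\ref{def:reduced}) handles this, since all implied constraints are already listed and none of them involves a pair among the first $k'$ parameters. Every assignment on such a pair is therefore genuinely required, and the remaining steps are immediate.
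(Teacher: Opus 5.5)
Your proposal is correct and takes essentially the same approach as the paper, which argues that every pairwise assignment among the first $k'$ parameters must be covered, so at least as many tests as in the unconstrained case on $k'$ parameters are needed. Your version makes this explicit by exhibiting the $N \times k'$ submatrix as a member of $\CA(N;2,k',2)$, and it also notes that the reduced hypothesis rules out implied constraints among those parameters.
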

\begin{proof}%[Lemma~\ref{lem:lb}]
If there are no forbidden assignments between the first $k'$ parameters, 
then every pair of assignments among those should be covered. 
Hence, at least as many tests as in the unconstrained case on $k'$ parameters are needed. \qed
\end{proof}
For example, 
matrix $\BFM^{(8)}$ denotes a covering problem on $k=5$ variables 
subject to $(X_3, X_5) \neq (1, 0)$ and $(X_4, X_5) \neq (1, 0)$. 
Since there is no forbidden pair of assignments between the first four parameters, 
then at least $\CAN(2,4,2)=5$ rows are needed under those two constraints, 
and therefore $\BFM^{(8)}$ is optimal.

\subsection{Going Off Balance}\label{sec:cliques}

We have seen a few plot twists, 
but there is more yet to come. 
In this section, 
we will focus on the parameters that are subject to pairwise constraints. 
In some cases, 
those parameters may have some---or even all---columns being unbalanced in optimal covering arrays. 
In full disclosure, 
the cases discussed in this section emerged by analyzing preliminary computational experiments. 
They are likely not the only unusual cases to consider, 
but they are possibly among the most common up to the number of parameters and pairwise constraints used in our experiments in Section~\ref{sec:results}, given that we came across them a few times.

To begin our discussion, 
consider matrices $\BFM^{(10)}$ and $\BFM^{(11)}$, 
each of which having submatrices of ones in the secondary diagonal defined by the lines below:
\[
\BFM^{(10)}{=}\left( \begin{array}{cc|cc} % K_2,2
0 & 0 ~ & ~ 1 & 1 \\ 
0 & 1 ~ & ~ 1 & 1 \\ 
1 & 0 ~ & ~ 1 & 1 \\ 
\hline
1 & 1 ~ & ~ 0 & 0 \\ 
1 & 1 ~ & ~ 0 & 1 \\ 
1 & 1 ~ & ~ 1 & 0 
\end{array} \right),
\BFM^{(11)}{=}\left( \begin{array}{cc|cccc} % K_2,4
0 & 0 ~ & ~ 1 & 1 & 1 & 1 \\ 
0 & 1 ~ & ~ 1 & 1 & 1 & 1 \\ 
1 & 0 ~ & ~ 1 & 1 & 1 & 1 \\ 
\hline
1 & 1 ~ & ~ 1 & 0 & 0 & 0 \\ 
1 & 1 ~ & ~ 0 & 1 & 0 & 0 \\ 
1 & 1 ~ & ~ 0 & 0 & 1 & 0 \\
1 & 1 ~ & ~ 0 & 0 & 0 & 1
\end{array} \right).
\]
Matrix $\BFM^{(10)}$ is an optimal covering array on $k=4$ parameters subject to 
$(X_1, X_3) \neq (0, 0)$, $(X_1, X_4) \neq (0, 0)$, $(X_2, X_3) \neq (0, 0)$, and $(X_2, X_4) \neq (0, 0)$, 
with $N=6$ rows and all columns having 4 ones and 2 zeros. 
In turn, matrix $\BFM^{(11)}$ is an optimal covering array on $k=6$ parameters subject to 
$(X_1, X_3) \neq (0, 0)$, $(X_1, X_4) \neq (0, 0)$, $(X_1, X_5) \neq (0, 0)$, $(X_1, X_6) \neq (0, 0)$, $(X_2, X_3) \neq (0, 0)$, $(X_2, X_4) \neq (0, 0)$, $(X_2, X_5) \neq (0, 0)$, and $(X_2, X_6) \neq (0, 0)$, 
with $N=7$ rows and the first two columns having 5 ones and 2 zeros.
In both cases, 
we need one more test in comparison to the unconstrained case.

If we model a graph with the parameters as vertices and edges between parameters in $0$--$0$ pairwise constraints, 
the cases just described would correspond to complete bipartite graphs. 
In the first case, the graph would be isomorphic to $K_{2,2}$, with one partition corresponding to $A := \{ X_1, X_2\}$ and the other to $B := \{X_3, X_4\}$. 
In the second case, 
the graph would be isomorphic to $K_{2,4}$, 
with one partition corresponding to $A := \{X_1, X_2\}$ and the other to $B := \{X_3, X_4, X_5, X_6\}$. 
That makes it easier to observe the following: 
if the value of a parameter in either partition is set to zero, 
then the $0$--$0$ pairwise constraints imply that the value of all the parameters in the other partition should be one. 
For the submatrices of $\BFM^{(10)}$ and $\BFM^{(11)}$ defined by the vertical and horizontal lines, 
the main diagonal having zeros for the parameters in $A$ (top left) and in $B$ (bottom right) 
imply that the secondary diagonal has submatrices of ones.

To proceed, we formalize the use of graphs to represent pairwise constraints:

\begin{definition}[Induced Constraint Graph]
For a subset of parameters $P$  of a constrained pairwise covering problem, 
let $G[P] = (P, E)$ be the induced constraint graph on $P$ with vertices in $P$ corresponding to the parameters 
and edges in $E$ corresponding to all pairs of parameters in $P$ having a pairwise constraint, be that constraint explicitly defined or implied by the other constraints.
\end{definition}

The circumstances above lead to nontrivial lower bounds on the number of rows of covering arrays and on the number of ones in some of their columns:

\begin{lemma}[$0$--$0$ $K_{m,n}$ Bound]\label{lem:kmn}
If a subset of parameters $P$ of a reduced constrained pairwise covering problem has an induced constraint graph $G[P]$ 
that is isomorphic to the complete bipartite graph $K_{m,n}$  with partitions $A$ and $B$, 
 $m := |A| \geq 2$ and $n := |B| \geq 2$, 
and such that $(X_i, X_j) \neq (0, 0) ~ \forall X_i \in A, X_j \in B$, 
then 
\begin{equation}\label{eq:00_kmn_n}
N \geq \CAN(2,m,2) + \CAN(2,n,2) - 2     
\end{equation}
and 
\begin{align}%{ccc}
w_i & \geq \CAN(2,n,2) & \forall X_i \in A, \label{eq:00_kmn_wia} \\ 
w_i & \geq \CAN(2,m,2) & \forall X_i \in B,  \label{eq:00_kmn_wib} 
\end{align}
where $w_i$ is the number of ones in the column associated with parameter $X_i$.
\end{lemma}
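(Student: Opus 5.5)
Fix a parameter $X_i \in A$ and let $Z_i$ be the set of rows where $\BFM_i$ has a zero. The plan is to show that the rows in $Z_i$ already contain a full unconstrained pairwise covering array for the parameters in $B$, and to carry this out for both sides.

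\textbf{Step 1: forced ones.} Every constraint between $A$ and $B$ is $(X_i,X_j)\neq(0,0)$. So in each row of $Z_i$, every column of $B$ has a one.

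\textbf{Step 2: covering the other side.} Take two parameters $X_{i'},X_{i''}\in A$. They share no constraint, because $G[P]\cong K_{m,n}$ has no edges inside $A$ and the problem is reduced, so no implied constraints are hidden. Hence every assignment $(\alpha,\beta)$ on $(X_{i'},X_{i''})$ must be covered. Now consider a row covering $(X_{i'},X_{i''})=(0,0)$. By Step 1, all of $B$ is one in that row. I will use the rows where some parameter of $A$ is zero, splitting them into two groups:
\begin{itemize}
\item $R_A$: the rows where every parameter of $B$ is forced to one;
\item $R_B$: symmetrically, the rows where every parameter of $A$ is forced to one.
\end{itemize}
Restricted to the columns of $A$, the rows outside $R_B$ must cover every assignment containing a zero. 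The rows of $R_B$ contribute only the all-ones pattern on $A$, and that pattern covers only the $(1,1)$ pairs. Consequently, the rows outside $R_B$, together with one extra all-ones row, form a covering array on $m$ columns. This gives
\[
|\{\text{rows} \notin R_B\}| + 1 \geq \CAN(2,m,2).
\]
The symmetric argument gives the analogous bound for $B$.

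\textbf{Step 3: disjointness and the bound \eqref{eq:00_kmn_n}.} No row can have a zero in both $A$ and $B$, since such a row would violate a $0$--$0$ constraint. So the rows with some zero in $A$ and the rows with some zero in $B$ are disjoint. I will use the rows where all of $A$ is one, and likewise for $B$. Let $S_A$ be the set of rows with at least one zero in $A$, and define $S_B$ in the same way. These sets are disjoint. Rows outside $S_A$ are all-ones on $A$, so $S_A$ plus a single all-ones row covers $A$, giving $|S_A|\ge \CAN(2,m,2)-1$. Similarly $|S_B|\ge \CAN(2,n,2)-1$. Adding the two bounds yields
\[
N\ge |S_A|+|S_B| \ge \CAN(2,m,2)+\CAN(2,n,2)-2,
\]
which is \eqref{eq:00_kmn_n}. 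The hypotheses $m,n\ge 2$ are what make each side need at least one zero row and make the unconstrained covering argument meaningful.

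\textbf{Step 4: the column weights \eqref{eq:00_kmn_wia} and \eqref{eq:00_kmn_wib}.} Let $X_i\in A$. By Step 1, $X_i$ is one in every row of $S_B$. So $w_i\ge |S_B|+1\ge \CAN(2,n,2)$ provided $X_i$ takes the value one in at least one row outside $S_B$. Two such rows exist:
\begin{itemize}
\item a row covering $(1,1)$ on two parameters of $A$ (if that row lies outside $S_B$);
\item a row covering $(X_i,X_{i'})=(1,0)$ for some $X_{i'}\in A$, which lies in $S_A$ and therefore outside $S_B$.
\end{itemize}
The second option always exists because $m\ge2$, so $w_i\ge |S_B|+1$ holds. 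The bound \eqref{eq:00_kmn_wib} follows symmetrically for $X_i\in B$.

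\textbf{Main obstacle.} The step I expect to be hardest is justifying that $S_A$ plus one all-ones row is a genuine covering array on $A$. Every assignment on a pair in $A$ that involves a zero must occur in $S_A$. The $(1,1)$ assignment can be supplied by the added row. Rows outside $S_A$ are exactly the all-ones restriction, so they add nothing else. A further care point is that the unconstrained bound $\CAN$ applies only because there are no implied constraints within $A$ or within $B$; this is where the reduced-problem assumption is needed.
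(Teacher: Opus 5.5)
Your proof is correct and follows essentially the same route as the paper. Both arguments use the same four steps: a zero on one side forces ones on the other side; the rows with a zero in $A$ and the rows with a zero in $B$ are disjoint; each side needs at least $\CAN(2,\cdot,2)-1$ such rows; and the weight bound comes from the opposite side's zero rows plus one row covering a $(1,0)$ assignment within the same side. Your justification of the $-1$ (adjoining one all-ones row to $S_A$ to get a genuine covering array on $A$) is slightly cleaner than the paper's remark about duplicate all-ones rows, and your Step 2 is redundant once you have Step 3.
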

\begin{proof}
To break the symmetry in the proof, 
let $P_1 \in \{A, B\}$ (i.e., one of the partitions among $A$ and $B$), $P_2 \in \{A, B\} \setminus \{ P_1 \}$ (i.e., the other partition), 
so that any claim proven for $P_1$ applies to $P_2$ and vice versa. 

Due to the pairwise constraints $(X_i, X_j) \neq (0,0)$ for $X_i \in P_1$ and $X_j \in P_2$, 
it follows that $X_i \in P_1 \wedge X_i = 0 \rightarrow X_j = 1 ~\forall X_j \in P_2$. 
In other words, 
any row with a zero in the columns corresponding to parameters in $P_1$ 
has a value of one in all columns corresponding to parameters in $P_2$. 

Since there are no pairwise constraints within $P_1$, 
a valid covering array must cover all pairwise assignments $(X_i, X_j) = (\alpha, \beta)$ for $X_i \in P_1$ and $X_j \in P_1$. 
But with the exception of $(\alpha, \beta) = (1, 1)$,
all other pairwise assignments within $P_1$ are covered by rows in which all parameters in $P_2$ are set to 1.

Considering the covering array problem on the parameters of $P_1$ alone, 
it takes a covering array with at least $\CAN(2,|P_1|,2)$ rows to cover all pairwise assignments. 
Moreover, no more than one of the rows of such a covering array has only the value one, or it would be a duplicate. 
Hence, it follows that a minimum of $\CAN(2,|P_1|,2) - 1$ rows of any valid covering array on all parameters of $P$ must have at least one value zero in columns corresponding to parameters in $P_1$.

Because the rows with a value zero in columns corresponding to parameters in $P_1$ and to parameters in $P_2$ are disjoint, it follows that $N \geq (\CAN(2,|P_1|,2) - 1) + (\CAN(2,|P_2|,2) - 1) = \CAN(2,m,2) + \CAN(2,n,2) - 2$, 
proving \eqref{eq:00_kmn_n}.

Finally,
the column associated with each parameter in $P_1$ in any valid covering array has at least as many ones as the number of rows in which the columns associated with the parameters in $P_2$ have at least one zero ($\CAN(2,|P_2|,2)-1$), 
since for all of those the value of all parameters in $P_1$ is one. If there are other parameters in $P_1$, there should be at least another row with the value one for each parameter $X_i \in P_1$ to cover the pairwise assignment for $(X_i, X_j) = (1,0)$ for another $X_j \in P_1$. 
Hence, $w_i \geq \CAN(2,|P_2|,2) ~\forall X_i \in P_1$, 
proving \eqref{eq:00_kmn_wia}--\eqref{eq:00_kmn_wib}. \qed
\end{proof}

The proof above entails the construction used for obtaining matrices $\BFM^{(10)}$ and $\BFM^{(11)}$. 
Let $A := \{X_1, \ldots,$ $X_m\}$ and $B := \{X_{m+1}, \ldots, X_{m+n}\}$.
Without loss of generality, 
let
$\overline{\BFM}^A \in \CAS(2,m,2)$ and $\overline{\BFM}^B \in \CAS(2,n,2)$ be valid covering arrays with rows of ones, 
since any other valid covering array can be turned to a valid covering array with a row of ones upon complementing all the columns having a value of zero in a chosen row. 
Moreover, let $\BFM^A$ and $\BFM^B$ be the submatrices of $\overline{\BFM}^A$ and $\overline{\BFM}^B$ without the row of ones. Finally, 
let
\[
\BFM^{(12)} := \left[ 
\begin{array}{cc}
\BFM^A & \BFW_{\CAN(2,m,2)-1, n} \\
\BFW_{\CAN(2,n,2)-1, m} & \BFM^B
\end{array}
\right]
\]
be a matrix containing $\BFW_{i,j}$ as submatrices of ones with $i$ rows and $j$ columns. 
Hence, matrix~$\BFM^{(12)}$ generalizes the construction of both $\BFM^{(10)}$ and $\BFM^{(11)}$.

This is the point in which we observe that \emph{optimal covering arrays may never have balanced columns for certain parameters}. 
If $|B| > |A|$ and there are possibly more rows in $\overline{\BFM}^B$ than in $\overline{\BFM}^A$, 
as is the case with matrix $\BFM^{(11)}$, 
then the columns for the parameters in $A$ can have significantly more ones than zeros.

More generally, any valid covering array on a set of parameters $P$ having a set of pairwise constraints that meet the conditions of Lemma~\ref{lem:kmn} would have that same structure upon permutation of rows and columns, 
with the exception that the covering arrays $\overline{\BFM}^A$ and $\overline{\BFM}^B$ on the subset of  parameters $A$ and $B$ are not optimal in all covering arrays.

Note, however, that there is nothing special about constraining $(0,0)$ pairs.  
Hence, we generalize Lemma~\ref{lem:kmn} to the cases that can be obtained by replacing some of the parameters with their complements if we adjust the pairwise constraints accordingly. 
We first characterize such parameter partitionings:

\begin{definition}[Coherent Partition of Induced Constraint Graph]
An induced constraint graph $G[P] = (P,E)$ has a \emph{coherent partition} of parameters $(P_0, P_1)$ if, 
for every pairwise constraint $(X_i, X_j) \neq (\alpha, \beta)$, 
it follows that $X_i \in P_{\alpha}$ and $X_j \in P_{\beta}$.
\end{definition}

In other words, 
a partitioning of parameters is coherent is the value of every parameter in the forbidden assignments associated with it is always the same (in this case, the value $0$ for parameters in $P_0$ and $1$ for parameters in $P_1$).

\begin{theorem}[Coherent $K_{m,n}$ Bound]\label{thm:kmn}
If a subset of parameters $P$ of a reduced constrained pairwise covering problem has (i) a coherent partition of parameters $(P_0, P_1)$; and (ii) an induced constraint graph $G[P]$ 
that is isomorphic to the complete bipartite graph $K_{m,n}$  with partitions $A$ and $B$, 
 $m := |A| \geq 2$ and $n := |B| \geq 2$, 
then 
\begin{equation}\label{eq:kmn_n}
N \geq \CAN(2,m,2) + \CAN(2,n,2) - 2     
\end{equation}
and 
\begin{align}%{ccc}
w_i & \geq \CAN(2,n,2) & \forall X_i \in A \cap P_0, \label{eq:kmn_wia} \\ 
w_i & \geq \CAN(2,m,2) & \forall X_i \in B \cap P_0,  \label{eq:kmn_wib} \\ 
w_i & \leq N - \CAN(2,n,2) & \forall X_i \in A \cap P_1, \label{eq:kmn_zia} \\ 
w_i & \leq N - \CAN(2,m,2) & \forall X_i \in B \cap P_1, \label{eq:kmn_zib} 
\end{align}
where $w_i$ is the number of ones in the column associated with parameter $X_i$.
\end{theorem}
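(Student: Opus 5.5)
The plan is to reduce Theorem~\ref{thm:kmn} to Lemma~\ref{lem:kmn} by complementing every column of parameters in $P_1$, in the spirit of Proposition~\ref{prop:alternate_complement} and of the proof of Theorem~\ref{cor:xi_alpha_xj_beta}. Given a valid covering array $\BFM$ for the constrained problem, define $\BFM'$ by $\BFM'_i = \BFone - \BFM_i$ for every $X_i \in P_1$ and $\BFM'_i = \BFM_i$ otherwise. Interpret $\BFM'$ as a covering array for a transformed problem on the parameters $X'_i$, where $X'_i = \overline{X}_i$ for $X_i \in P_1$ and $X'_i = X_i$ otherwise. Each forbidden assignment $(X_i, X_j) \neq (\alpha,\beta)$ becomes $(X'_i, X'_j) \neq (\alpha',\beta')$, with a value flipped exactly when the parameter lies in $P_1$. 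By coherence, $X_i \in P_\alpha$ and $X_j \in P_\beta$, so a value $1$ occurs precisely for parameters in $P_1$ and is flipped to $0$. Every constraint within $P$ therefore becomes a $(0,0)$ constraint. All required (non-forbidden) pairwise assignments map bijectively row by row, exactly as in the proof of Proposition~\ref{prop:alternate_complement}. Hence $\BFM'$ is a valid covering array for the transformed problem if and only if $\BFM$ is valid for the original one.

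Next I verify that the hypotheses of Lemma~\ref{lem:kmn} hold for the transformed problem. Complementation is a bijection on assignments, so it preserves which parameters are fixed and which pairs carry explicit or implied constraints. The transformed problem is therefore still reduced in the sense of Definition~\ref{def:reduced}. Its induced constraint graph on $P$ is the same graph $K_{m,n}$ with partitions $A$ and $B$, and every edge now carries $(X'_i,X'_j) \neq (0,0)$. If constraints are written with $X_i \in A$ and $X_j \in B$, the ordering issue is harmless, since $(0,0)$ is symmetric. Applying Lemma~\ref{lem:kmn} to $\BFM'$ gives \eqref{eq:kmn_n} immediately, because $\BFM$ and $\BFM'$ have the same number of rows $N$. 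It also gives $w'_i \geq \CAN(2,n,2)$ for $X_i \in A$ and $w'_i \geq \CAN(2,m,2)$ for $X_i \in B$.

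Finally I translate the column bounds back to $\BFM$. For $X_i \in P_0$ we have $w_i = w'_i$, which yields \eqref{eq:kmn_wia} and \eqref{eq:kmn_wib}. For $X_i \in P_1$ we have $w_i = N - w'_i$, so $w_i \leq N - \CAN(2,n,2)$ on $A$ and $w_i \leq N - \CAN(2,m,2)$ on $B$, which are \eqref{eq:kmn_zia} and \eqref{eq:kmn_zib}.

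The main point needing care is the claim that complementation commutes with \emph{implied} constraints, so that the transformed problem is reduced and its induced graph is exactly $K_{m,n}$ with no extra implied edges. I would argue it directly: the map $X_i \mapsto X'_i$ is a bijection on $\{0,1\}^k$ that sends feasible tests to feasible tests. Consequently any implied pairwise restriction or fixed value in one problem corresponds to one in the other.
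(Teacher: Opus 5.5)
Your proposal is correct and is essentially the paper's proof: complement the columns of parameters in $P_1$, use coherence to turn every edge of $G[P]$ into a $(0,0)$ constraint, invoke Lemma~\ref{lem:kmn}, and translate back via $w_i = N - w'_i$ for $X_i \in P_1$. Your explicit check that complementation preserves reducedness and implied constraints fills in a step the paper only asserts. Your translation $w_i = N - w'_i \leq N - \CAN(2,n,2)$ is also the right one, whereas the paper's intermediate display $z_i \geq N - \CAN(2,n,2)$ is misstated and should read $z_i \geq \CAN(2,n,2)$.
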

\begin{proof}
By taking the complement of all parameters in $P_1$ in the forbidden assignments, 
we obtain a new constrained pairwise covering problem to which Lemma~\ref{lem:kmn} applies. 
With a similar logic to Proposition~\ref{prop:alternate_complement}, 
except that considering the constrained case, 
any valid covering array to this new covering problem and to the covering problem in the statement that we are proving are interchangeable upon complement of the parameters that are in $P_1$ for the latter.

Due to the interchangeability of valid covering arrays between the two covering problems, the bound \eqref{eq:00_kmn_n} from Lemma~\ref{lem:kmn} implies the claimed bound \eqref{eq:kmn_n}.

For the parameters in $P_0$, for which we do \emph{not} take complements between the valid covering arrays of the two covering problems, 
the bounds \eqref{eq:00_kmn_wia} and \eqref{eq:00_kmn_wib} from Lemma~\ref{lem:kmn} directly imply the claimed bounds \eqref{eq:kmn_wia} and \eqref{eq:kmn_wib}.

For the parameters in $P_1$, for which we \emph{do} take complements between the valid covering arrays of the two covering problems, 
the bounds \eqref{eq:00_kmn_wia} and \eqref{eq:00_kmn_wib} from Lemma~\ref{lem:kmn} actually imply different bounds on columns of the covering problem:
\begin{align}%{ccc}
z_i & \geq N - \CAN(2,n,2) & \forall X_i \in A \cap P_1, \label{eq:kmn_zia_proof} \\ 
z_i & \geq N - \CAN(2,m,2) & \forall X_i \in B \cap P_1, \label{eq:kmn_zib_proof} 
\end{align}
where $z_i$ is the number of ones in the column associated with parameter $X_i$.
With $z_i = N - w_i$, 
those new bounds \eqref{eq:kmn_zia_proof} and \eqref{eq:kmn_zib_proof}  
imply the claimed bounds \eqref{eq:kmn_zia} and \eqref{eq:kmn_zib}, respectively.
\qed
\end{proof}

The use of graphs for modeling pairwise constraints and their complements has been first proposed in Danziger et al.~\cite{danziger2009cafe},  
where an algorithm of exponential complexity produces covering arrays of minimum size.  
Such graphical models have also been used in complexity results~\cite{danziger2009cafe,maltais2010cafe} and  
applied to study special lower bounds for this problem~\cite{danziger2009cafe,yang2021decomposing}. 
In fact, 
the next results are also inspired by 
the use of a clique of pairwise constraints in Danziger et al.'s algorithm. 

To illustrate this next case, let us first consider matrices $\BFM^{(13)}$ and $\BFM^{(14)}$:
\[
\BFM^{(13)}{=} \left( \begin{array}{ccc} % K_3
0 & 1 & 1 \\
1 & 0 & 1 \\
1 & 1 & 0 
\end{array} \right),~
\BFM^{(14)}{=} \left( \begin{array}{ccc|c} % K_3
0 & 1 & 1 ~ & ~ 1  \\
0 & 1 & 1 ~ & ~ 0  \\ \hline 
1 & 0 & 1 ~ & ~ 1  \\
1 & 0 & 1 ~ & ~ 0  \\ \hline
1 & 1 & 0 ~ & ~ 1  \\
1 & 1 & 0 ~ & ~ 0  \\ 
\end{array} \right).
\]
Both of those are optimal covering arrays subject to the same pairwise constraints: 
$(X_1, X_2) \neq (0, 0)$, $(X_1, X_3) \neq (0, 0)$, and $(X_2, X_3) \neq (0, 0)$.
In other words, we have a clique of constraints among the first three parameters while all other parameters are unconstrained. 
When one of those first three parameters has the value zero, the other two parameters must have the value one. 

What changes from one case to another is the existence and an unconstrained parameter. 
We separate the constrained parameters from the unconstrained parameter with a vertical line. 
In the first case with $k=3$ parameters, given that there are no unconstrained parameters, 
we actually have \emph{one less test} than in the optimal unconstrained case. 
Once we add one unconstrained parameter and $k=4$, however, 
the need to cover both $(X_i, X_4) = (0, 0)$ and $(X_i, X_4) = (0, 1)$ for every constrained parameter $X_i$ 
implies that we need two rows in which each constrained parameter has the value zero. 
Given the clique of pairwise constraints, 
that effectively doubles the number of rows in comparison to $k=3$. 
Now that we have rows that are identical with respect to the constrained parameters alone, 
we separate rows that are distinct on the constrained parameters with horizontal lines. 
With $k=4$ parameters, we have \emph{one more test} than in the optimal unconstrained case.

This is point in which we observe that \emph{all parameters may have unbalanced columns in optimal covering arrays}. 
If we consider a covering array on $k \geq 3$ parameters with constraints of the form $(X_i, X_j) \neq (0,0) ~ \forall i \in \{1, \ldots, k\}, j \in \{1, \ldots, k\} \setminus \{ i \}$, 
hence forming a fully connected graph on $k$ parameters and generalizing the case with matrix~$\BFM^{(13)}$, 
then each column has $k-1$ ones and one zero.
However, a fully connected graph is a rather specific and unlikely case.

Hence, our object of interest now is not exactly a clique, 
but a combination of a clique and isolated vertices. 
That has been denoted as a \emph{sparse split graph} in the literature~\cite{SparseSplit}.
We use the following characterization for convenience:

\begin{definition}[Sparse Split Induced Constraint Graph]
An induced constraint graph $G[P] = (P,E)$ is \emph{sparse split} with a partition of parameters $(P_C, P_U)$ if the induced constraint graph $G[P_C]$ is isomorphic to the clique $K_n$, where $n := |P_C|$, and there are no pairwise constraints between the parameters in $P_U$ and the parameters in $P$.
\end{definition}

However, matrices $\BFM^{(13)}$ and $\BFM^{(14)}$ are not sufficient to capture all the nuance of what happens when the pairwise constraints define a sparse split graph. 
To continue the discussion, 
let us consider matrices $\BFM^{(15)}$ and $\BFM^{(16)}$:
\[
\BFM^{(15)}{=} \left( \begin{array}{ccc|cccc} % K_3
0 & 1 & 1 ~ & ~ 1 & 1 & 1 & 1 \\
0 & 1 & 1 ~ & ~ 0 & 0 & 0 & 0  \\ \hline 
1 & 0 & 1 ~ & ~ 1 & 1 & 0 & 0  \\
1 & 0 & 1 ~ & ~ 0 & 0 & 1 & 1  \\ \hline
1 & 1 & 0 ~ & ~ 1 & 0 & 1 & 0  \\
1 & 1 & 0 ~ & ~ 0 & 1 & 0 & 1  \\ 
\end{array} \right),~
\BFM^{(16)}{=} \left( \begin{array}{ccccccc||cc} % K_3
0 & 1 & 1 & 1 & 1 & 1 & 1 ~ & ~ 1 & 1 \\
0 & 1 & 1 & 0 & 0 & 0 & 0 ~ & ~ 0 & 0 \\  
1 & 0 & 1 & 1 & 1 & 0 & 0 ~ & ~ 1 & 0 \\
1 & 0 & 1 & 0 & 0 & 1 & 1 ~ & ~ 0 & 1 \\ 
1 & 1 & 0 & 1 & 0 & 1 & 0 ~ & ~ 0 & 0 \\
1 & 1 & 0 & 0 & 1 & 0 & 1 ~ & ~ 0 & 0 \\ \hline \hline 
1 & 1 & 0 & 0 & 0 & 0 & 0 ~ & ~ 1 & 1
\end{array} \right).
\]
Both of those are optimal covering arrays subject to the same pairwise constraints mentioned before. 
In the first case with $k=7$ parameters, 
we keep a vertical line separating constrained parameters from unconstrained parameters, 
as well as horizontal lines separating distinct rows on the constrained parameters. 
When we compare the cases of $k=4$ (matrix $\BFM^{(14)}$) and $k=7$ (matrix $\BFM^{(15)}$), 
we see that adding more unconstrained parameters did not impact the optimal number of tests required 
(in fact, the number of tests for $k=7$ is the \emph{same} as in the optimal unconstrained case). 
In the second case with $k=9$ parameters, 
however, 
there is one more test than before, 
and therefore \emph{one more test} than in the optimal unconstrained case. 
We use the double vertical and horizontal lines to highlight that matrix $\BFM^{(15)}$ is the submatrix of $\BFM^{(16)}$ on the first 6 rows and 7 columns. 
We establish with the next result the need for at least 2 rows per parameter in the clique, 
and possibly more rows in cases such as with $\BFM^{(16)}$:

\begin{lemma}[$0$--$0$ $K_{n}$ Bound\footnote{We will keep referring to this family of bounds as $K_n$ because the clique is central to the result and cliques are more commonly known than sparse split graphs.}]\label{lem:kn}
If a subset of parameters $P$ of a reduced constrained pairwise covering problem induces a sparse split constraint graph $G[P]$ with partitions $(P_C, P_U)$,  
$n := |P_C| \geq 3$ 
and $m := |P_U| \geq 1$,  
then
\begin{equation}\label{eq:00_kn_n}
N \geq 2 n + \alpha,
\end{equation}
where $2n+\alpha$, $\alpha \geq 0$, is the smallest number of rows such that 
(a) there are $m$ columns defining a valid covering array on the parameters in $P_U$; and 
(b) we can partition that covering array in $n$ subsets of tests, 
each of which having $X_i = 0$ and $X_i = 1$ for each parameter $X_i \in P_U$.  
Moreover, 
\begin{align}
w_i \geq 2 (n-1) & ~~~ \forall i \in P_C, \label{eq:00_kn_wi} \\
\sum_{i \in P_C} w_i \geq (2n + \alpha) (n-1), \label{eq:00_kn_swi}
\end{align}
where $w_i$ is the number of ones in the column of $X_i \in P_C$ in any solution.
\end{lemma}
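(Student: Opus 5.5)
The plan is to exploit the structure forced by the clique of $0$--$0$ constraints on $P_C$. In any valid covering array, a row can have a zero in at most one column of $P_C$, since any two parameters of $P_C$ are jointly forbidden from $(0,0)$. Hence the rows split into disjoint classes: $R_i$, the rows where $X_i=0$ for $X_i \in P_C$, and $R_\ast$, the rows where every parameter in $P_C$ equals one. The bounds will come from lower-bounding the classes and then counting ones.

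\textbf{Step 1: each $R_i$ contains both values of every unconstrained parameter.} Fix $X_i \in P_C$ and $X_j \in P_U$. There is no constraint between them, so both $(X_i,X_j)=(0,0)$ and $(X_i,X_j)=(0,1)$ must be covered. Both can only occur in rows of $R_i$. Since $m \geq 1$, this gives $|R_i| \geq 2$ for every $i$, and so $N \geq 2n$. More precisely, the restriction of $\BFM$ to the columns of $P_U$ is a valid covering array on $P_U$, because there are no constraints inside $P_U$. Its rows are partitioned into the $n$ subsets $R_i$, each containing both a $0$ and a $1$ in every column of $P_U$, together with the extra subset $R_\ast$. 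Merging $R_\ast$ into any $R_i$ keeps conditions (a) and (b) of the statement. So $N$ is at least the smallest number of rows admitting such a structure, which by definition is $2n+\alpha$. This proves \eqref{eq:00_kn_n}.

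\textbf{Step 2: per-column bound.} For $X_i \in P_C$, the column has ones in every row outside $R_i$. These include the rows of $R_{i'}$ for each of the other $n-1$ parameters $X_{i'} \in P_C$, each of size at least $2$ by Step 1. Hence $w_i \geq 2(n-1)$, which is \eqref{eq:00_kn_wi}.

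\textbf{Step 3: aggregate bound.} Count ones row by row. A row in some $R_i$ has exactly $n-1$ ones on $P_C$, and a row in $R_\ast$ has $n$ ones. Therefore $\sum_{i \in P_C} w_i \geq N(n-1) \geq (2n+\alpha)(n-1)$ by Step 1, giving \eqref{eq:00_kn_swi}.

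The main obstacle is the rigor of Step 1's reduction to the minimal quantity $2n+\alpha$. One must check that the induced structure on $P_U$ satisfies exactly the conditions (a) and (b) defining $\alpha$, with the extra rows $R_\ast$ absorbed without violating them. One must also note that the assumption $n \geq 3$ is not needed for the counting; the paper requires it presumably so that the constraint graph is a genuine clique and not just a single edge.
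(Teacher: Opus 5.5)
Your proof is correct and follows essentially the same route as the paper: you split the rows by which clique parameter is zero, use the coverage of $(0,0)$ and $(0,1)$ against an unconstrained parameter to get at least two rows per class, read off (a)--(b) from the restriction to $P_U$, and then count ones per column and per row. The one difference is that you explicitly absorb the all-ones rows $R_\ast$ into some $R_i$ and invoke minimality of $2n+\alpha$ directly; the paper leaves both points implicit, so this is a small tightening of its argument.
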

\begin{proof}
Let us assume that $P_C := \{X_1, \ldots,$ $X_n\}$. 
Hence, let $M := [ M_C ~ | ~ M_U ]$ be any valid covering array for the constrained pairwise covering problem on $P$, 
in which $M_C$ has $n$ columns and $M_U$ has $m$ columns. 
Without loss of generality, we may assume that the first rows of $M_C$ have the value zero in $X_1$, 
then the next rows of $M_C$ have the value zero in $X_2$, 
and so on, 
since each test has at most one value zero in $P_C$, 
and thus we can rearrange the rows of any covering array $M$ in such a way. 
Since there are no pairwise constraints between parameters in $P_C$ and $P_U$, 
there should be at least two rows with $X_i = 0$ for each $X_i \in P_C$ 
to cover the pairwise assignments $(X_i, X_j) = (0, 0)$ and $(X_i, X_j) = (0, 1)$ for each $X_j \in P_U$. 
With $n$ of such parameters, it follows that $N \geq 2n$, 
proving \eqref{eq:00_kn_n}.

Since there are no pairwise constraints within the parameters in $P_U$,
it follows that $M_U \in \CA(N;2,m,2)$, 
hence proving (a).
Moreover, $M_U$ is a valid covering array on $P_U$ in which 
$X_j = 0$ and $X_j = 1$ for every $X_j \in P_U$ in the subset of tests for which $X_i = 0$ for each $X_i \in P_C$, 
since otherwise we would not cover $(X_i, X_j) = (0, 0)$ or $(X_i, X_j) = (0, 1)$. 
By assumption, 
those subsets of tests correspond to contiguous subsets of rows of $M$, 
the first with $X_1 = 0$,
the second with $X_2 = 0$, 
and so on, 
with a total of $n$ subsets and therefore proving (b).

Since each row has no more than one zero in the first $n$ columns, and thus at least $n-1$ ones, 
then $\sum_{i \in P_C} w_i \geq (2n+\alpha)(n-1)$, 
proving~\eqref{eq:00_kn_wi}.
Since each of the first $n$ columns has $n-1$ sets with at least 2 rows each having value one in the column, 
then $w_i \geq 2 (n-1) ~\forall X_i \in P$, 
proving~\eqref{eq:00_kn_swi}. \qed
\end{proof}

Similarly to what we did before with Lemma~\ref{lem:kmn}, 
we now use the concept of a coherent partition of parameters to generalize the result from Lemma~\ref{lem:kn}:

\begin{theorem}[Coherent $K_{n}$ Bound]\label{thm:kn}
If a subset of parameters $P$ of a reduced constrained pairwise covering problem (i) has a coherent partition of parameters $(P_0, P_1)$; and (ii) induces a sparse split constraint graph $G[P]$ with partitions $(P_C, P_U)$,  
$n := |P_C| \geq 3$ 
and $m := |P_U| \geq 1$,  
then
\begin{equation}\label{eq:kn_n}
N \geq 2 n + \alpha,
\end{equation}
where $2n+\alpha$, $\alpha \geq 0$, is the smallest number of rows such that 
(a) there are $m$ columns defining a valid covering array on the parameters in $P_U$; and 
(b) we can partition that covering array in $n$ subsets of tests, 
each of which having $X_i = 0$ and $X_i = 1$ for each parameter $X_i \in P_U$.  
Moreover, 
\begin{align}
w_i \geq 2 (n-1) & ~~~ \forall i \in P_C \cap P_0, \label{eq:kn_wi} \\
w_i \leq N - 2 (n-1) & ~~~ \forall i \in P_C \cap P_1, \label{eq:kn_win} \\
\sum_{i \in P_C \cap P_0} w_i + \sum_{i \in P_C \cap P_1} (N-w_i) \geq (2n + \alpha) (n-1), \label{eq:kn_swi}
\end{align}
where $w_i$ is the number of ones in the column of $X_i \in P_C$ in any solution.
\end{theorem}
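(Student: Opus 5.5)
The plan mirrors the proof of Theorem~\ref{thm:kmn}: reduce to Lemma~\ref{lem:kn} by complementing the parameters in $P_1$.

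First, define a transformed constrained pairwise covering problem. Replace every parameter $X_i \in P_1$ by its complement $\overline{X}_i$, and rewrite every forbidden assignment accordingly. By coherence of $(P_0,P_1)$, each constraint $(X_i,X_j) \neq (\alpha,\beta)$ has $X_i \in P_\alpha$ and $X_j \in P_\beta$. So after complementing the parameters in $P_1$, every forbidden value becomes $0$, and every constraint becomes a $0$--$0$ constraint. Complementation does not change which pairs are constrained, so the induced constraint graph is unchanged and is still sparse split with partition $(P_C,P_U)$. The transformed problem also stays reduced: fixed values and implied constraints correspond bijectively under complementation.

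Second, establish interchangeability of valid covering arrays. This uses the same argument as Proposition~\ref{prop:alternate_complement}, extended to the constrained case. A row covers $(X_i,X_j)=(\gamma,\delta)$ in the original problem if and only if, after complementing the columns of $P_1$, it covers the correspondingly complemented assignment in the new problem. Forbidden assignments map to forbidden assignments and required ones to required ones. Hence the map $\BFM \mapsto \BFM'$, which complements the columns of $P_1$, is a bijection between valid covering arrays of the two problems, and it preserves $N$.

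Third, transfer the bounds. Applying Lemma~\ref{lem:kn} to $\BFM'$ gives $N \geq 2n+\alpha$, so \eqref{eq:kn_n} follows immediately. Here $\alpha$ depends only on $P_U$, $m$ and $n$, since conditions (a) and (b) concern only $P_U$ and are invariant under complementing columns of $P_U \cap P_1$: covering arrays map to covering arrays, and a part containing both values keeps both values. For $X_i \in P_C \cap P_0$, the column is untouched, so \eqref{eq:00_kn_wi} gives \eqref{eq:kn_wi}. For $X_i \in P_C \cap P_1$, the number of ones in $\BFM'_i$ is $N-w_i$, so $N - w_i \geq 2(n-1)$, which is \eqref{eq:kn_win}. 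Finally, \eqref{eq:00_kn_swi} sums the ones of $\BFM'$ over $P_C$, which equals $\sum_{P_C\cap P_0} w_i + \sum_{P_C\cap P_1}(N-w_i)$. This yields \eqref{eq:kn_swi}.

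The main obstacle is the second step. It requires checking carefully that complementation commutes with the constraint structure, including implied constraints, so that the transformed problem is a reduced problem with exactly the hypotheses of Lemma~\ref{lem:kn}. The rest is bookkeeping with $z_i = N - w_i$.
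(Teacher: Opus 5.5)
Your proposal is correct and is essentially the paper's proof: you complement the parameters in $P_1$ so that every constraint of $G[P]$ becomes $0$--$0$, apply Lemma~\ref{lem:kn} to the transformed problem, and translate back using the fact that a column in $P_1$ has $N-w_i$ ones after complementation. Your direct derivation $N-w_i \geq 2(n-1)$ for $X_i \in P_C \cap P_1$ is cleaner than the paper's intermediate inequality on $z_i$; one small nit is that coherence only turns the constraints \emph{inside} $P$ into $0$--$0$ constraints (constraints linking $P_1$ to parameters outside $P$ need not become $0$--$0$), but this is harmless because Lemma~\ref{lem:kn} only uses $G[P]$.
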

\begin{proof}
This proof is very similar in structure to the proof of Theorem~\ref{thm:kmn}.

By taking the complement of all parameters in $P_1$ in the forbidden assignments, 
we obtain a new constrained pairwise covering problem to which Lemma~\ref{lem:kn} applies. 
With a similar logic to Proposition~\ref{prop:alternate_complement}, 
except that considering the constrained case, 
any valid covering array to this new covering problem and to the covering problem in the statement that we are proving are interchangeable upon complement of the parameters that are in $P_1$ for the latter.

Due to the interchangeability of valid covering arrays between the two covering problems, the bound \eqref{eq:00_kn_n} from Lemma~\ref{lem:kn} implies the claimed bound \eqref{eq:kn_n}. 
Similarly, 
the validity of conditions (a) and (b) from Lemma~\ref{lem:kn} imply the validity of the claimed conditions (a) and (b) here. 

For the parameters in $P_0$, for which we do \emph{not} take complements between the valid covering arrays of the two covering problems, 
the bound \eqref{eq:00_kn_wi} from Lemma~\ref{lem:kn} directly implies the claimed bound \eqref{eq:kn_wi}.

For the parameters in $P_1$, for which we \emph{do} take complements between the valid covering arrays of the two covering problems, 
the bounds \eqref{eq:00_kn_wi} and \eqref{eq:00_kn_swi} from Lemma~\ref{lem:kn} actually imply different bounds on columns of the covering problem:
\begin{align}%{ccc}
z_i \leq N - 2 (n-1) & ~~~ \forall i \in P_C \cap P_1, \label{eq:kn_win_proof} \\
\sum_{i \in P_C \cap P_0} w_i + \sum_{i \in P_C \cap P_1} z_i \geq (2n + \alpha) (n-1), \label{eq:kn_swi_proof}
\end{align}
where $z_i$ is the number of ones in the column associated with parameter $X_i$.
With $z_i = N - w_i$, 
those new bounds \eqref{eq:kn_win_proof} and \eqref{eq:kn_swi_proof} 
imply the claimed bounds \eqref{eq:kn_win} and \eqref{eq:kn_swi}, respectively.
\qed
\end{proof}

To finalize this part, 
we characterize a relevant case in which $\alpha \neq 0$. 
This is motivated by the construction of matrix $\BFM^{(15)}$, 
which we repeat for convenience:
\[
\BFM^{(15)}{=} \left( \begin{array}{ccc|cccc} % K_3
0 & 1 & 1 ~ & ~ 1 & 1 & 1 & 1 \\
0 & 1 & 1 ~ & ~ 0 & 0 & 0 & 0  \\ \hline 
1 & 0 & 1 ~ & ~ 1 & 1 & 0 & 0  \\
1 & 0 & 1 ~ & ~ 0 & 0 & 1 & 1  \\ \hline
1 & 1 & 0 ~ & ~ 1 & 0 & 1 & 0  \\
1 & 1 & 0 ~ & ~ 0 & 1 & 0 & 1  \\ 
\end{array} \right).
\]
For the unconstrained parameters (to the right of the vertical line), 
note how there is a zero and a one for each column within every group defined by the horizontal lines. 
Essentially, our only degree of freedom is in deciding if the zero or the one goes first in each column within each group. 
That naturally produces balanced columns for the unconstrained parameters if each group has two rows. 
By restricting the columns within the first group to always have the value one first, 
we prevent having columns that are complements. 
By changing the order of the rows having each value in the other two groups, 
we can only obtain the four distinct columns that we observe to the right of the vertical line. 
Hence, 
having one more parameter would entail having another row in any valid covering array. 
We generalize the relationship between the number of constrained and unconstrained parameters and the need for an additional test as follows:

\begin{corollary}[Extra Test Threshold for Coherent $K_n$ Bound]
    When the conditions of Theorem~\ref{thm:kn} apply, 
    with $n$ as the number of constrained parameters inducing a clique and $m$ as the number of unconstrained parameters, 
    then
    \[
    m > 2(n-1) \rightarrow \alpha \geq 1.
    \]
\end{corollary}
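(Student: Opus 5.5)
We argue by contrapositive: assume $\alpha = 0$, so $N = 2n$ exactly, and bound $m$ from above by $2(n-1)$. The plan proceeds in four steps. I should say up front that the counting below gives $2^{n-1}$, not $2(n-1)$, so as written it closes only the case $n = 3$. For $n \geq 4$ I expect the stated threshold to need correcting rather than proving (see the last step).

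\textbf{Step 1: reduce to Lemma~\ref{lem:kn} and fix the row groups.} By the interchangeability argument in the proof of Theorem~\ref{thm:kn}, we may complement the parameters in $P_1$ and work in the setting of Lemma~\ref{lem:kn}, where all clique constraints are $(X_i,X_j)\neq(0,0)$. Each test has at most one zero among the parameters in $P_C$. Each $X_i \in P_C$ needs at least two rows with $X_i = 0$, to cover $(X_i,X_j)=(0,0)$ and $(0,1)$ for every $X_j \in P_U$. Since $N = 2n$, we can rearrange the rows into $n$ groups of exactly two rows, the $i$-th group being where $X_i = 0$. By part (b) of Lemma~\ref{lem:kn}, every unconstrained column has exactly one zero and one one inside each group.

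\textbf{Step 2: encode each unconstrained column.} Each column of $M_U$ is then determined by an orientation bit $b_g \in \{0,1\}$ per group $g$, recording whether the one comes first. Complementing an unconstrained column preserves validity (Proposition~\ref{prop:alternate_complement} applies verbatim, since these parameters carry no constraints), so we fix $b_1 = 1$. As in the discussion of $\BFM^{(15)}$, this rules out complementary pairs. Each column is now a vector in $\{0,1\}^{n-1}$, and by Proposition~\ref{prop:unique_cols} these vectors are pairwise distinct.

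\textbf{Step 3: count.} From Step 2, $m \leq 2^{n-1}$. For $n = 3$ this equals $2(n-1) = 4$, which is the $\BFM^{(15)}$ case. There, a fifth unconstrained column would repeat or complement one of the four existing ones, contradicting Proposition~\ref{prop:unique_cols} or Proposition~\ref{prop:not_complement}. Hence $m > 4$ forces $N > 2n$, i.e.\ $\alpha \geq 1$.

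\textbf{Step 4 (main obstacle): tightening $2^{n-1}$ to $2(n-1)$ for $n \geq 4$.} I would first look for an extra restriction on the orientation vectors, for instance a balance condition on $P_U$ interacting with the clique columns. However, all $2^{n-1}$ normalized vectors appear to give a valid array. Any two distinct vectors agree in group 1, which covers $(0,0)$ and $(1,1)$. They differ in some group, which covers $(0,1)$ and $(1,0)$. So no such restriction seems to exist, and $m$ up to $2^{n-1}$ is achievable with $\alpha = 0$. My plan is therefore to write out this explicit construction for $n = 4$, $m = 7$ ($N = 8$, seven distinct vectors in $\{0,1\}^3$). If it checks out, it is a counterexample to the corollary as worded, and the provable statement is $m > 2^{n-1} \rightarrow \alpha \geq 1$, which coincides with the stated threshold exactly when $n = 3$.
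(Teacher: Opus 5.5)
Your analysis is correct. Steps 1--3 are the paper's own argument with the correct count. The paper's proof uses your setup: two rows per clique parameter, one orientation choice per group for each unconstrained column, complements discarded. However, it asserts that there are $2n$ such columns and concludes that at most $2(n-1)$ can coexist. Since the orientation is chosen independently in each of the $n$ groups, there are $2^n$ patterns, and at most $2^{n-1}$ remain after removing complementary pairs. Given $n\ge 3$, $2^{n-1}=2(n-1)$ only at $n=3$, which is why $\BFM^{(15)}$ and $\BFM^{(16)}$ do not expose the slip.

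Your $n=4$ construction checks out, even with all $m=8$ normalized vectors:
\begin{itemize}
\item Every row has exactly one zero on $P_C$, so $(0,0)$ never occurs inside the clique.
\item For $X_i,X_{i'}\in P_C$, group $i$ gives $(0,1)$, group $i'$ gives $(1,0)$, and any third group gives $(1,1)$.
\item For $X_i\in P_C$ and $X_j\in P_U$, group $i$ gives $(0,0)$ and $(0,1)$, and any other group gives $(1,0)$ and $(1,1)$.
\item The array realizes every non-forbidden pairwise assignment, so the instance is reduced, and $P_1=\emptyset$ gives a coherent partition.
\end{itemize}
Hence $\alpha=0$ while $m\in\{7,8\}>2(n-1)=6$. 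More generally, $\alpha=0$ with $m=2n-1\le 2^{n-1}$ for every $n\ge 4$. So the corollary as worded is false for $n\ge 4$. The provable statement is $m>2^{n-1}\rightarrow\alpha\ge 1$, which your Steps 1--3 establish.

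One small tightening in Step 2. After normalizing $b_1=1$, distinctness of the resulting vectors in $\{0,1\}^{n-1}$ needs both Proposition~\ref{prop:unique_cols} and Proposition~\ref{prop:not_complement}, since two complementary columns normalize to the same vector. Alternatively, skip the normalization: $m$ pairwise distinct, non-complementary vectors in $\{0,1\}^n$ contain at most one vector from each of the $2^{n-1}$ complementary pairs.
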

\begin{proof}
Without loss of generality, 
let us assume that $P_1 = \emptyset$; 
since we can define an equivalent covering problem by taking the complement of all parameters in $P_1$ in the forbidden assignments, 
and then obtain a valid covering array for the original covering problem by taking the complement of the columns associated with $P_1$ in a valid covering array for the new problem, 
as in previous proofs.

Now let us suppose, for contradiction, that $\alpha = 0$ yields a tight bound on $N$. 
In other words, there is a valid covering array with $2n$ tests. 
In such an array, there are two rows in which $X_i = 0$ for each $X_i \in P_C$, 
which are disjoint from the rows in which $X_j = 0$ for a different parameter $X_j \in P_C$.

Since this array should cover the pairwise assignments $(X_i, X_j) = (0, 0)$ and $(X_i, X_j) = (0, 1)$ between each $X_i \in P_C$ and each unconstrained parameter $X_j \in P_U$, 
then there is one row covering each such assignment.
If two columns on the unconstrained parameters $X_j \in P_U$ cover all of those assignments with the same rows, 
then those columns are identical, 
and that would contradict Proposition~\ref{prop:unique_cols} with respect to the columns on the unconstrained parameters in $P_U$ defining a valid covering array. 
For each pair of columns on $P_U$, the rows containing the assignments $X_j = 0$ and $X_j = 1$ should thus be different within the group in which $X_i = 0$ for at least one parameter $X_i \in P_C$. 
There are $2n$ columns that can be defined by alternating the order of the assignments to $X_j$ within those groups. 
But since half of those would be complements since there are only two rows per group, 
then no more than $2(n-1)$ of such columns can define a valid covering array on the unconstrained parameters in $P_U$ due to Proposition~\ref{prop:not_complement}.
However, that would contradict that $m > 2(n-1)$, 
hence implying that the initial assumption is false and thus that $\alpha \geq 1$ in this case. 
\end{proof}

\subsection{Regaining Balance}\label{sec:conjecture}

Let us have a moment of hope now. 
In this section, 
we will focus on the parameters that are not subject to pairwise constraints.
Despite the lengthy discussion about optimal covering arrays necessarily having unbalanced columns, 
that has so far applied only to the parameters that are subject to pairwise constraints.
Hence,
we have not yet seen a contradiction to the use of balanced columns for all other parameters. 
For that reason, we posit the following:

\begin{conj}[Balanced Columns for Unconstrained Parameters]\label{conj:balanced}
In a reduced pairwise constrained covering problem, 
there is an optimal covering array in which parameters without pairwise constraints have balanced columns. 
\end{conj}

In fact, this argument revisits the discussion from Section~\ref{sec:shadow} about using unbalanced columns in optimal unconstrained covering arrays. 
While it is possible to use unbalanced columns in some cases, 
their use may further limit the number of columns that can be put together to produce a valid covering array.

Moreover, 
if two parameters are unconstrained, 
then their columns remain interchangeable. 
Hence, 
we can significantly reduce the symmetry when formulating the problem of finding optimal covering arrays 
if we assume that we simply need a certain number of balanced columns to be assigned to the unconstrained parameters. 
Since any association of those columns to parameters is possible, 
then the exact association is irrelevant. 
That is how Conjecture~\ref{conj:balanced} will help in the next section. 

\section{Integer Programming Models}\label{sec:milp}

Following the developments from the last two sections, 
we are now able to propose an algorithm and an heuristic for producing optimal, or at least valid and hopefully good, covering arrays. 
Our general approach is outlined in Algorithm~\ref{alg:approach}. 
It is based on solving a series of subproblems with the support of IP models, 
each considering covering arrays with a fixed number of tests. 

The elements below are common throughout our approach and IP models:
\begin{itemize}
\item the set of parameter indices $\mathcal{P} := \{ 1, \ldots, k \}$; 
\item the set of pairs of parameter indices that are subject to a pairwise constraint $\mathcal{F} \subseteq \{ (i_1,i_2) : i_1 \in \mathcal{P}, i_2 \in \mathcal{P} \setminus \{ i_1 \} \}$; and 
\item the forbidden assignment for each such pair $\mathscr{F} : \mathcal{F} \rightarrow \{0, 1\} \times \{0, 1\}$. 
\end{itemize}

\begin{algorithm}[b!]
\begin{algorithmic}[1]
{\footnotesize
\Function{Solve}{$k, \mathcal{P}, \mathcal{F}, \mathscr{F}$}
\State $N \gets \CAN(2,k,2)$ \Comment{Start with number of tests from unconstrained case}
\State $M \gets$ \Call{FindCoveringArray}{$k, \mathcal{P}, \mathcal{F}, \mathscr{F}, N$} \Comment{Try to find a covering array}
\If{$M = \emptyset$} \Comment{We assume $M = \emptyset$ if we cannot find a covering array}
\Repeat \Comment{If we do \underline{not} find a covering array at first...}
\State $N \gets N + 1$ \Comment{we look for one with \underline{more} tests}
\State $M' \gets$ \Call{FindCoveringArray}{$k, \mathcal{P}, \mathcal{F}, \mathscr{F}, N$}
\Until{$M' \neq \emptyset$} \Comment{Stop after finding the first covering array}
\State $M \gets M'$ \Comment{Use this first covering array}
\Else
\Repeat \Comment{If we \underline{do} find a covering array at first...}
\State $N \gets N - 1$ \Comment{we look for others with \underline{fewer} tests}
\State $M' \gets$ \Call{FindCoveringArray}{$k, \mathcal{P}, \mathcal{F}, \mathscr{F}, N$} 
\If{$M' \neq \emptyset$} \Comment{If a smaller covering array is found...} 
\State $M \gets M'$  \Comment{we use that covering array instead}
\EndIf
\Until{$M' = \emptyset$} \Comment{Stop when we cannot find covering arrays anymore}
\EndIf
\State \Return $M$
\EndFunction
}
\end{algorithmic}
\caption{General approach for solving reduced pairwise constrained covering array problems, starting with the optimal number of unconstrained tests}\label{alg:approach}
\end{algorithm}

Starting with as many tests $N$ as we would need in the unconstrained case, 
Algorithm~\ref{alg:approach} either 
(i) increments $N$ as far as needed while trying to find a first valid covering array; or 
(ii) decrements $N$ until we can no longer find a valid covering array. 
In addition, 
we may stop trying smaller number of tests when we reach a proven lower bound, such as in Lemma~\ref{lem:lb} and Theorems~\ref{thm:kmn} and \ref{thm:kn}. 
It may seem unusual to increase or decrease the number of tests by 1 between steps rather than, for example, perform a binary search. 
However, we typically only need to try a few different number of tests before concluding the search. 

Hence, the only part that changes between our exact and heuristic approaches is 
the implementation of function~\textproc{FindCoveringArray}. 
This function must find a valid covering array for a given number of tests $N$. 
While our ultimate goal is to minimize the number of tests, 
allowing the number of tests to vary in the IP models would make them larger and more difficult to solve.  
Hence, we use the objective function of the IP models to heuristically direct the search in consistency with the traits that we have observed and conjectured to be common in most of the optimal covering arrays.

Now we consider what the function~\textproc{FindCoveringArray} does in each case.
We first present a simplified model coupled with backtracking for fast but heuristic results (Section~\ref{sec:p2}). 
Then we present an exact model based on our theoretical results to determine if a valid covering array on $N$ tests exists~(Section~\ref{sec:mip0}) and discuss how it changes if we assume Conjecture~\ref{conj:balanced} (Section~\ref{sec:mip1}).
We also present a baseline model that does not depend on our theoretical results (Section~\ref{sec:big_mip}).

\subsection{Heuristic Approach $\text{H}$}\label{sec:p2}

We will first describe the heuristic approach, 
since it represents a simplified version of exact model presented afterwards.

\begin{subequations}
\begin{align}
    \min ~ & k N \Delta + \sum_{i \in \mathcal{P}} \delta_i \label{lin:p2_obj} \\
        \text{s.t.} ~~ & \delta_i \geq w_i - \lfloor N/2 \rfloor & \forall i \in \mathcal{P} \label{lin:p2_delta1} \\
                        & \delta_i \geq \lfloor N/2 \rfloor - w_i & \forall i \in \mathcal{P} \label{lin:p2_delta2} \\
                        & \Delta \geq \delta_i & \forall i \in \mathcal{P} \label{lin:p2_delta} \\
                        & w_{i_1} + w_{i_2} \geq N+1 & ~~ \forall (i_1,i_2) \in \mathcal{F} : \mathscr{F}(i_1,i_2) = (0,0) \label{lin:p2_unb1} \\
                        & w_{i_1} - w_{i_2} \geq 1 & ~~ \forall (i_1,i_2) \in \mathcal{F} : \mathscr{F}(i_1,i_2) = (0,1) \\
                        & w_{i_2} - w_{i_1} \geq 1 & ~~ \forall (i_1,i_2) \in \mathcal{F} : \mathscr{F}(i_1,i_2) = (1,0) \\
                        & w_{i_1} + w_{i_2} \leq N-1 & ~~ \forall (i_1,i_2) \in \mathcal{F} : \mathscr{F}(i_1,i_2) = (1,1) \label{lin:p2_unb2} \\
                        & w_i \leq N - 1 & \forall i \in \mathcal{P} \label{lin:p2_wub} \\
                        & w_i \in \mathbb{Z}^+ & \forall i \in \mathcal{P} \\
                        & \delta_i \in \mathbb{R}^+ & \forall i \in \mathcal{P} \\
                        & \Delta \in \mathbb{R}^+ \label{lin:p2_last}
\end{align}
\end{subequations}
In the IP model \eqref{lin:p2_obj}--\eqref{lin:p2_last}, 
we only decide the number of ones $w_i$ in the column used for each parameter $X_i, i \in \mathcal{P}$, 
leaving the actual choice of the column to postprocessing. 
We capture with $\delta_i$ how far each parameter $X_i$ is from having a balanced column in (\ref{lin:p2_delta1}--\ref{lin:p2_delta2}), 
and with $\Delta$ the largest of such values in~\eqref{lin:p2_delta}. 
We use those constraints to define lower bounds on $\delta_i$ and $\Delta$, 
and then we use both in the objective function~\eqref{lin:p2_obj}. 
The objective function in this case is intended to make it easier to find a feasible solution when assigning columns to each parameter afterwards. 
First, we prioritize minimizing $\Delta$ first by effectively using $kN$ as a sufficiently large constant. 
Second, we minimize the sum of the $\delta_i$ variables. 
We also ensure that the main decision variables satisfy Theorem~\ref{cor:xi_alpha_xj_beta} in (\ref{lin:p2_unb1}--\ref{lin:p2_unb2}). 
Moreover, we bound the number of ones to $N-1$ due to Assumption~\ref{ass:first} in~\eqref{lin:p2_wub}. 
Finally, we may also derive other inequalities on variables $w_i$ directly from Theorems~\ref{thm:kmn} and \ref{thm:kn} where applicable, 
but we omit those from the formulation for clarity.

Based on an optimal solution of the model above, 
we first tried to find columns with the prescribed number of ones 
by implementing a backtracking algorithm following best practices~\cite{vanbeek2006bt}. 
By assigning columns to the parameters with fewer options first, 
we were able to quickly obtain solutions to some instances---many of which backtracking-free and matching the optimal value from the exact approach.  
However, 
there were also instances in which the exact methods were successful while this approach timed out. 
In those, 
the solutions obtained by the exact methods had a different number of ones in the columns. 

Hence,
we engineered a more nuanced heuristic to exploit the structural properties discussed in the paper. 
Our goal was twofold: 
(i) for the instances that the exact methods could solve to optimality, we wanted to quickly produce solutions of similar quality; and
(ii) for the other instances, we wanted to be competitive with the most popular heuristics in use. 
We briefly outline this approach below. 

First, we only use the IP model \eqref{lin:p2_obj}--\eqref{lin:p2_last} 
to determine if a covering array of size $N$ exists. 
If not, we increment the value of $N$ and repeat. 

Second, 
we generate a few different solutions on the $w$ variables, 
which we denote as \emph{profiles}. 
Each profile defines a smaller search space to explore in order to find a feasible assignment of columns to the parameters, 
where the number of ones on the column associated with each parameter is fixed. 
Before generating the profiles, 
we use Theorems~\ref{cor:xi_alpha_xj_beta}, \ref{thm:kmn}, and \ref{thm:kn} for tightening the domains of the $w$ variables. 
We generate the first profile by trying to assign each variable in $w$ to be as closed to balanced as possible, 
and then looping over the constraints entailed by Theorem~\ref{cor:xi_alpha_xj_beta} to correct violations by adjusting in one unit the value of the variable with largest domain. 
To obtain the first profile, 
this loop is repeated until there are no violations left. 
This gives us the first profile $w = \bar{w}$.

Then we derive additional profiles by adjusting the number of ones associated with the parameters with the most  constraints on $w$ entailed by Theorem~\ref{cor:xi_alpha_xj_beta}. 
Always starting from the first profile $\bar{w}$, 
we generate separate profiles by adjusting both up and down the value in $w$ for each of the $C_1$ most constrained parameters, 
as well as with all combinations of adjusting up and down the value in $w$ for each pair among the $C_2$ most constrained parameters.  
In each of those $2 C_1 + 4 C_2$ new profiles, 
adjusting a value in $w$ may entail changes in the value of $w$ for other parameters due to Theorem~\ref{cor:xi_alpha_xj_beta} constraints, as well as to subsequent corrections similar to those used for generating the first profile. 
We keep the top $P$ resulting profiles after ranking them first by the largest slack with respect to the constraints defined by Theorem~\ref{cor:xi_alpha_xj_beta} (looser is better);  
second by the number of such constraints with zero slack (less is better); 
third by the sum of all slacks (larger is better); 
and then by how closed to balanced the values in $w$ are.

Third, we search for a covering array of size $N$. 
We use the profiles above to search for an assignment of columns to the parameters by backtracking. 
We first search over the columns having $w_i$ ones for each parameter $i$ that is entailed by the top ranked profile $w$, 
and then we search over the columns with a number of ones in $\{w_i - 1, w_i, w_i + 1\}$ for each parameter $i$ in each profile $w$ that was kept. 
We try assigning columns to the constrained parameter with fewer options left first, 
and among the columns available we prioritize those that would keep more balanced columns available after propagation. 
While searching for such assignments, 
we ensure that there are enough balanced columns left for the unconstrained parameters. 
If we exhaust the profiles, 
we search over the entire space. 
If we do not succeed in $T_N$ seconds, 
we increment $N$ and repeat. 

Finally, in case we find a covering array $\BFM$, say of size $N$, 
we may use it to find a covering array of size $N-1$ in two ways. 
The first is by generating profiles corresponding to removing each of the rows of $\BFM$, 
ranking them from most to least balanced, 
and then searching by backtracking on the first $R_1$ of them as above. 
The second is by removing each of the rows in order until obtaining $R_2$ distinct profiles, 
and then doing a backtrack-based search in which the column associated with each parameter cannot change by more than two elements. 
We limit the duration of each individual search to $T_F$ seconds. 

\subsection{Exact Approach $\Mzero$}\label{sec:mip0}

We will now describe the exact approach, 
which modifies and extends the IP model previously described to decide on its own if a valid covering array can be found or not, 
hence without nontrivial postprocessing in comparison to before.

To make the next model easier to understand and maintain consistency with prior statements and models, 
we reserve the use of the following indices and single-letter variables to specific purposes, 
to the extent that it is possible:
\begin{itemize}
    \item $\alpha, \beta$ for the values zero and one associated with parameters; 
    \item $c$ for columns;
    \item $i, i_1, i_2$ for parameter indices; 
    \item $j, j_1, j_2$ for the number of ones in columns;
    \item $k$ for the number of parameters;
    \item $\ell$ for other occasional summations; and
    \item $N$ for number of rows.
\end{itemize}

The additional elements below are used in this IP model:
\begin{itemize}
    \item the set of unconstrained parameters $\tilde{\mathcal{P}} \subseteq \mathcal{P}$; 
    \item the set of pairs of parameters without a pairwise constraint $\overline{\mathcal{F}} \subseteq \mathcal{P} \times \mathcal{P}$; 
    \item the set of allowable number of ones\footnote{This preprocessing is not necessary, but can reduce domains considerably. We find those sets based on which parameters must have more or less ones in their columns than other parameters, 
    leading to stricter bounds on their number of ones given $N$.} $\mathcal{O}_i \subseteq \{1, \ldots, N-1\}$ in a column that can be assigned to parameter $X_i, i \in \mathcal{P}$; 
    \item the set of all binary columns $\mathcal{C} := \{0,1\}^k$, and some of its subsets:
    \begin{itemize}
        \item the columns with $j$ ones $\mathcal{C}^j  := \{ c \in \mathcal{C} : \sum_{\ell=1}^k c_\ell = j\}$, 
        \item the canonical balanced columns with the value one in the first row $\mathcal{B} := \{ c \in \mathcal{C}^{\lfloor N/2 \rfloor} : c_1 = 1 \}$, and
        \item the allowable columns\footnote{Also obtained by preprocessing to reduce domains.} $\mathcal{C}_i := \{ c : c \in \mathcal{C}^j, j \in \mathcal{O}_i \}$ for parameter $X_i, i \in \mathcal{P}$; 
    \end{itemize}
    \item the indices of parameters\footnote{This is the converse of the previous preprocessing producing $\mathcal{C}_i$.} $\mathcal{P}_c \subset \mathcal{P}$ that are compatible with column $c \in \mathcal{C}$;
    \item the set of incompatible pairs of columns $\mathcal{I} \subset \mathcal{C}^2$ because at least one pairwise assignment is not covered; and
    \item the set of incompatible pairs of columns $\mathcal{I}^{(\alpha,\beta)} \subset \mathcal{C}^2$ because at least one pairwise assignment other than $(\alpha, \beta)$ is not covered. 
\end{itemize}

\ifpreprint
    % Nada
\else
    \MzeroMILP
\fi
In the IP model~\eqref{lin:objp3}--\eqref{lin:lastp3}, 
our main decision is what column $c \in \mathcal{C}$ is uniquely assigned to each parameter $X_i, i \in \mathcal{P}$ 
through the binary variable $V_{i c}$. 
Each parameter is assigned a column by~\eqref{lin:vpc1}, 
and each column is assigned to at most one parameter by~\eqref{lin:vpc2}.

Instead of a single variable $w_i$ for the number of ones in the column associated with $X_i$ as before, 
we use the binary variable $W_{i j}$ to more easily capture each possible number of ones $j$ for the column assigned to parameter $X_i$. 
Each parameter has a number of ones for the column assigned to it by~\eqref{lin:wpi1}, 
and the columns can only be assigned if they have that corresponding number of ones due to~\eqref{lin:wpi2}. 
Moreover, those variables make it easier to capture the inequalities from Theorem~\ref{cor:xi_alpha_xj_beta} for each possible number of ones with (\ref{lin:unb1p3}--\ref{lin:unb2p3}). 

For each pair of parameters, 
we define inequalities avoiding the assignment of incompatible columns when there is a pairwise constraint between them with~\eqref{lin:vv1} and when there is not with~\eqref{lin:vv2}, 
since the columns that would cover all and only the allowable pairwise assignments in each case would be different. 

To exploit the fact that canonical balanced columns are all mutually compatible, 
we use an additional decision variable $U_c$ denoting if a balanced column is used for an unconstrained parameter $X_i$. 
That way, we can subject $V_{i c} = 1$ to when $U_c = 1$ by~\eqref{lin:uuse}; 
and then limit the use of such a column, regardless of which parameter uses it, if a conflicting column is assigned to a constrained parameter with~\eqref{lin:uconflict}. 
Please note that this is not yet exploiting Conjecture~\ref{conj:balanced}, 
since an unbalanced column may still be assigned to an unconstrained parameter. 
However, that makes that case easier to discuss in Section~\ref{sec:mip1}.

Finally, the objective function~\eqref{lin:objp3} is intended to guide the solver toward a feasible solution: 
we aim to maximize the number of canonical balanced columns used, 
since they are automatically all compatible with one another, 
by penalizing any other choice of column for a parameter and even more so if the assigned column has a greater absolute difference between the number of zeros and ones.  

\ifpreprint
    \MzeroMILP
\fi

As before, 
we can also derive valid inequalities from Theorems~\ref{thm:kmn} and \ref{thm:kn} to strengthen the formulation. 
They are not strictly necessary, but they certainly make it easier to find a valid covering array when the conditions of those theorems hold. 
Hence, 
we also omit them from the formulation for clarity.

In practice, 
we can stop solving the model when a feasible solution is found, 
since the goal of each model is to find a feasible solution for a given $N$. 
Moreover, 
we can reduce the model size significantly by generating the constraints   (\ref{lin:unb1p3}--\ref{lin:uconflict}) 
through lazy callback by inspecting potentially feasible solutions. 
In other words,
we inspect the covering arrays associated with solutions of the IP formulation without those constraints, 
and then dynamically add the constraints that would make those solutions infeasible if the covering array is not valid.

\subsection{Conjectured Exact Approach $\Mone$}\label{sec:mip1}

If we assume Conjecture~\ref{conj:balanced}, 
then we use only balanced columns for every unconstrained parameter $X_i, i \in \tilde{\mathcal{P}}$, 
which reduces the number of choices considerably.

One simple way to produce that modification would be to restrict the compatibility of unconstrained parameters to only canonical balanced columns in the preprocessing, 
hence not modifying the previous formulation at all.

\subsection{Baseline Exact Approach $\Mb$}\label{sec:big_mip}

Now we present a baseline model in which we model the assignments in the covering array as decision variables. 
The results previously discussed are not necessary for this model, except for Assumption~\ref{ass:last}: 
we state that every pair of assignments that is not forbidden should be covered in at least one row, 
which thus renders the model infeasible if an implicit pairwise constraint exists. 

In this model, we use the additional index and elements:
\begin{itemize}
\item $r$ for the number of the row in the covering array; 
\item the set of rows $\mathcal{R} := \{ 1, \ldots, N^{\max}\}$, where $N^{\max}$ is chosen in advance.
\end{itemize}

\ifpreprint
    % Nada!
\else
    \MbaselineMILP
\fi

In the IP model~\eqref{eq:big_obj}--\eqref{eq:big_last}, 
we represent the covering array assignment directly with the binary variable $M_{r i}$ denoting the value of parameter $i$ (column $i$) in test $r$ (row $r$).
In addition, 
we use the binary variable $Y_{r i_1 i_2 \alpha \beta}$ to denote if test $r$ covers the pairwise assignment $(X_{i_1}, X_{i_2}) = (\alpha, \beta)$; 
and the binary variable $Z_r$ to denote if row $r$ is used in the covering array. 
If a row is not used, we skip that row when producing the covering array from the solutions obtained.

We relate the $Y$ variables with a value of one to the corresponding assignment on the $M$ variables with~\eqref{eq:big_ym1}--\eqref{eq:big_ym2}. 
We ensure that at least one row covers each allowed pairwise assignment on the $Y$ variables with~\eqref{eq:big_y11} when there is no pairwise constraint between the parameters, 
and with~\eqref{eq:big_y12} when there is a pairwise assignment between the parameters. 
We ensure that no row covers a forbidden pairwise assignment with~\eqref{eq:big_y0} on the $Y$ variables; 
and with~\eqref{eq:big_mm1}--\eqref{eq:big_mm2} on the $M$ variables.
Finally, 
we limit the value of one on the $Y$ variables when the value on the corresponding $Z$ variable is one with~\eqref{eq:big_yz}.

The objective function~\eqref{eq:big_obj} then induces an optimal solution corresponding to an optimal covering array, 
provided that we choose a sufficiently large value for $N^{\max}$ in advance. 
However, using a much larger for $N^{\max}$ than needed may result in a much larger model, 
which may not be solved as fast. 

This model has many forms of symmetry. 
However, 
we could not obtain any significant improvement in preliminary experiments by exploiting some of those in the unconstrained case. 
Nevertheless, 
we believe that this model is an earnest alternative if we were to ignore the developments proposed in this paper, 
given that the authors formulated and relied on it as part of their initial investigation.

\ifpreprint
    \MbaselineMILP
\fi

\section{Computational Experiments}\label{sec:results}

We designed our computational experiments to evaluate the four approaches described: 
the exact method $\Mzero$, the conjectured exact method $\Mone$, the baseline exact method $\Mb$, and the heuristic method $\text{H}$.  
We also compare some of those approaches with commonly used heuristics from the literature by using two solvers, 
the Automated Combinatorial Testing for Software (ACTS) 3.2 solver \cite{NIST_Combinatorial_Testing_Tools} and the Constrained Covering Array Generation (CCAG) solver~\cite{CCAG_GIST_NJU}. 
ACTS was chosen because of its historical importance and continuous development.
CCAG was chosen because it is a framework implementing and comparing the most popular heuristics in its accompanying survey~\cite{wu2021comparative}.   
We refer to ACTS as one of the heuristics tested. 
We also test heuristics AETG, DDA, PSO, SA, and TS from CCAG along with their best-performed handlers identified in~\cite{wu2021comparative}. 

\subsection{Implementation Details}

We generated 300 random binary instances by combining $|\mathcal{P}| \in \{10,20,30,40,50\}$ parameters with $|\mathcal{F}| \in \{1,5,10,20,40,80\}$ forbidden pairs, using 10 random seeds for each setting.
All methods were given a 30-minute limit per instance, except for ACTS because it is preconfigured and runs to completion. 
For the CCAG heuristics, 
we record the best solution obtained within this budget and the first time in which such a solution was obtained, 
and the number of iterations is chosen to exceed the time limit and the solver is interrupted at the time limit. 
For heuristic H,
we used the following settings:  
$C_1 = 6$, 
$C_2 = 4$, 
$P = 9$, 
$T_N = 180$, 
$R_1 = 8$, 
$R_2 = 8$, and
$T_F = 60$.

All experiments were run on identical Red Hat Enterprise Linux~9.6 nodes with Intel Xeon Gold 6226 (Cascade Lake) processors at 2.70~GHz, 
with each run confined to a single core and 32~GB of RAM. The methods were implemented in Python~3.9.21 and MILP models were solved with the Gurobi~12.0.1 solver.

\subsection{Results and Analysis}

We collected data and visualized the results with a few questions in mind. 
First, 
we evaluated the exact approaches to answer the following questions: 
\begin{enumerate}[(i)]
    \item \emph{How does the proposed approach $\Mzero$ compare with the baseline $\Mb$?}
    \item \emph{Is there a refutation of Conjecture~\ref{conj:balanced} in the results with approach $\Mone$?}
    \item \emph{If not and assuming Conjecture~\ref{conj:balanced}, how does $\Mone$ compare with $\Mzero$?}
\end{enumerate}

\ifpreprint
    \FigPerformance
\fi

Figure~\ref{fig:performance} shows a cumulative plot of the proportion of instances solved by each exact method, 
including both when an optimal solution is found and when the instance is proven infeasible, 
in logarithmic and linear time scale. 
We note that the visible uptick of instances solved within a hundredth of a second by both $\Mzero$ and $\Mone$ 
correspond to cases in which preprocessing has proven infeasibility. 
In total, 76 of the 300 instances are proven infeasible by preprocessing. 
That includes all the instances with 80 pairwise constraints. 

\ifpreprint
    %Nada
\else
    \FigPerformance
\fi

Figure~\ref{fig:exact-rows} compares the solution value (number of rows) of the covering arrays obtained by exact methods for each instance. 
In this and other scatter plots, 
we exclude infeasible instances, represent timeout results with a large value labeled as T.O., 
and use larger markers with a number inside when multiple instances produce the same pair of values. 
If an optimal solution is found by both methods, we expected---and indeed observed---that the solution values match. 
Hence, 
the only differences are in the timeouts summarized at the top left of each plot. 

\FigExactRows

Figure~\ref{fig:exact-times} compares the time taken by exact methods to solve each instance, 
allowing us to evaluate which method solved each instance faster. 
Those cases are summarized at the bottom right of each plot. 

\begin{figure}[h!]
\centering
\includegraphics[width=\textwidth]{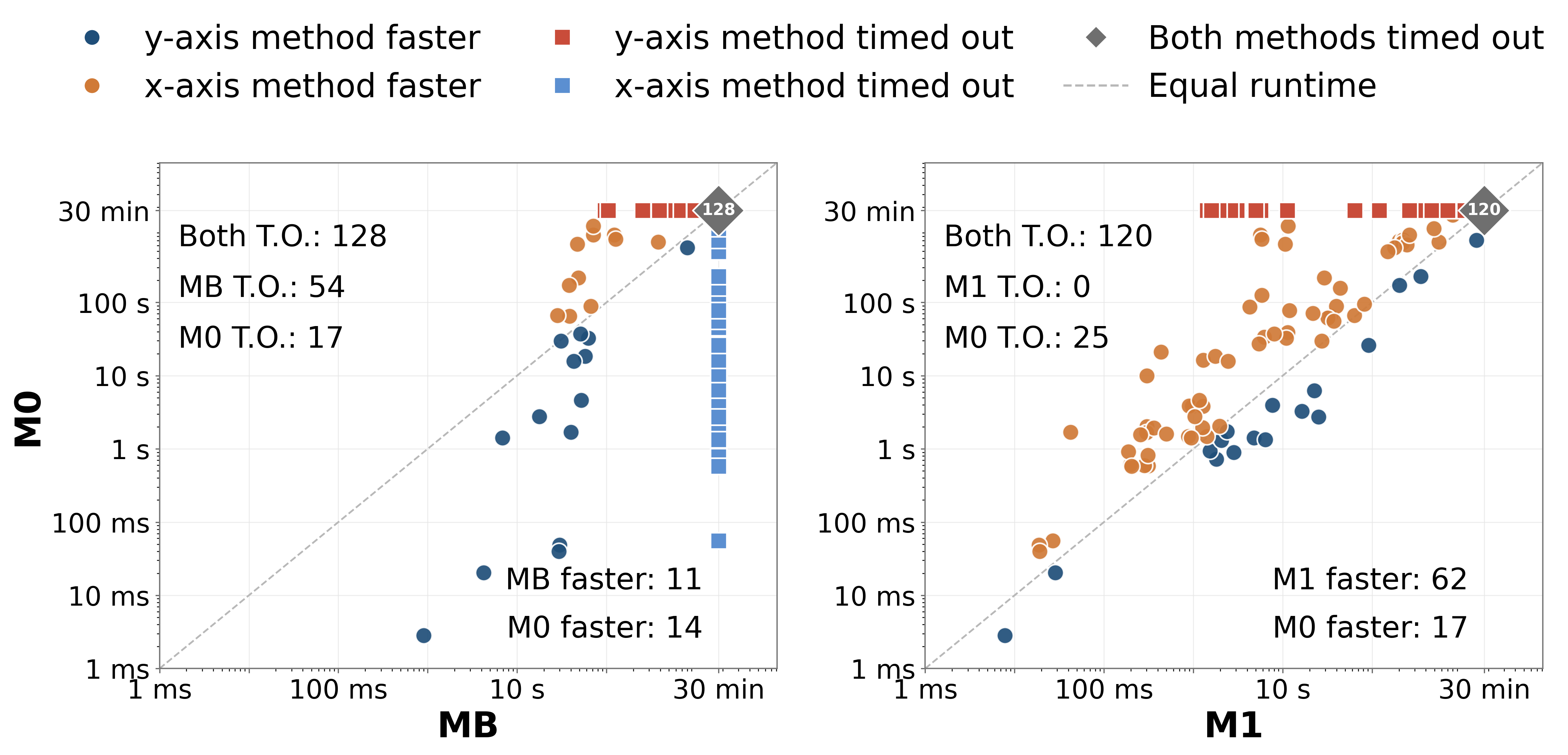}
\caption{Pairwise comparison of runtimes and timeouts for exact methods, 
contrasting $\Mb$ with $\Mzero$ (left) and $\Mone$ with $\Mzero$ (right).}
\label{fig:exact-times}
\end{figure}

\paragraph{Answering (i)} 
We observe that $\Mzero$ has solved more instances than $\Mb$ at any given time (Figure~\ref{fig:performance}), 
although both methods solved instances that the other did not (Figure~\ref{fig:exact-rows} left). 
Among those, 
we observe instances solved up to a minute by $\Mb$ being solved faster by $\Mzero$; whereas 
the instances in which $\Mzero$ times out take at least 100 seconds to be solved by $\Mb$, 
while most of the instances in which $\Mb$ times out take at least 1 second to be solved by $\Mzero$ (Figure~\ref{fig:exact-times} left). 
Hence, 
$\Mzero$ generally solves more instances and faster than $\Mb$. 

\paragraph{Answering (ii)} 
There is no instance solved by both $\Mzero$ and $\Mone$ in which the optimal solution value is not the same (Figure~\ref{fig:exact-rows} right). 
Hence, 
we have not observed a contradiction with Conjecture~\ref{conj:balanced}, 
which remains unrefuted. 

\paragraph{Answering (iii)}  
We observe that $\Mone$ similarly has solved more instances than $\Mzero$ at any given time (Figure~\ref{fig:performance}), 
and in fact the instances solved by $\Mzero$ are a proper subset of those solved by $\Mone$ (Figure~\ref{fig:exact-rows} right). 
Among the instances solved by both, 
the majority was solved faster by $\Mone$, sometimes with orders-of-magnitude speedups, 
whereas the instances solved faster by $\Mzero$ were within about an order-of-magnitude speedup (Figure~\ref{fig:exact-times} right). 
Hence, 
$\Mone$ performs better than $\Mzero$, 
solving the same and even more instances, and generally faster.

Second, 
we evaluated our heuristic approach $\text{H}$ with the following questions:
\begin{enumerate}[(i)]
    \setcounter{enumi}{3}
    \item \emph{How does $\text{H}$ compare with the exact approaches $\Mzero$ and $\Mone$?}
    \item \emph{How does $\text{H}$ compare with with other commonly used heuristics?}
    \item \emph{Is there a refutation of Conjecture~\ref{conj:balanced} in the results with any heuristic?}
\end{enumerate} 

We use similar scatter plots as before 
to compare $\text{H}$ with the exact methods $\Mzero$ and $\Mone$: 
Figure~\ref{fig:heuristic-rows} compares the solution value (number of rows) of the covering arrays obtained and 
Figure~\ref{fig:heuristic-times} compares the runtimes involved. 

\begin{figure}[h!]
\centering
\includegraphics[width=\textwidth]{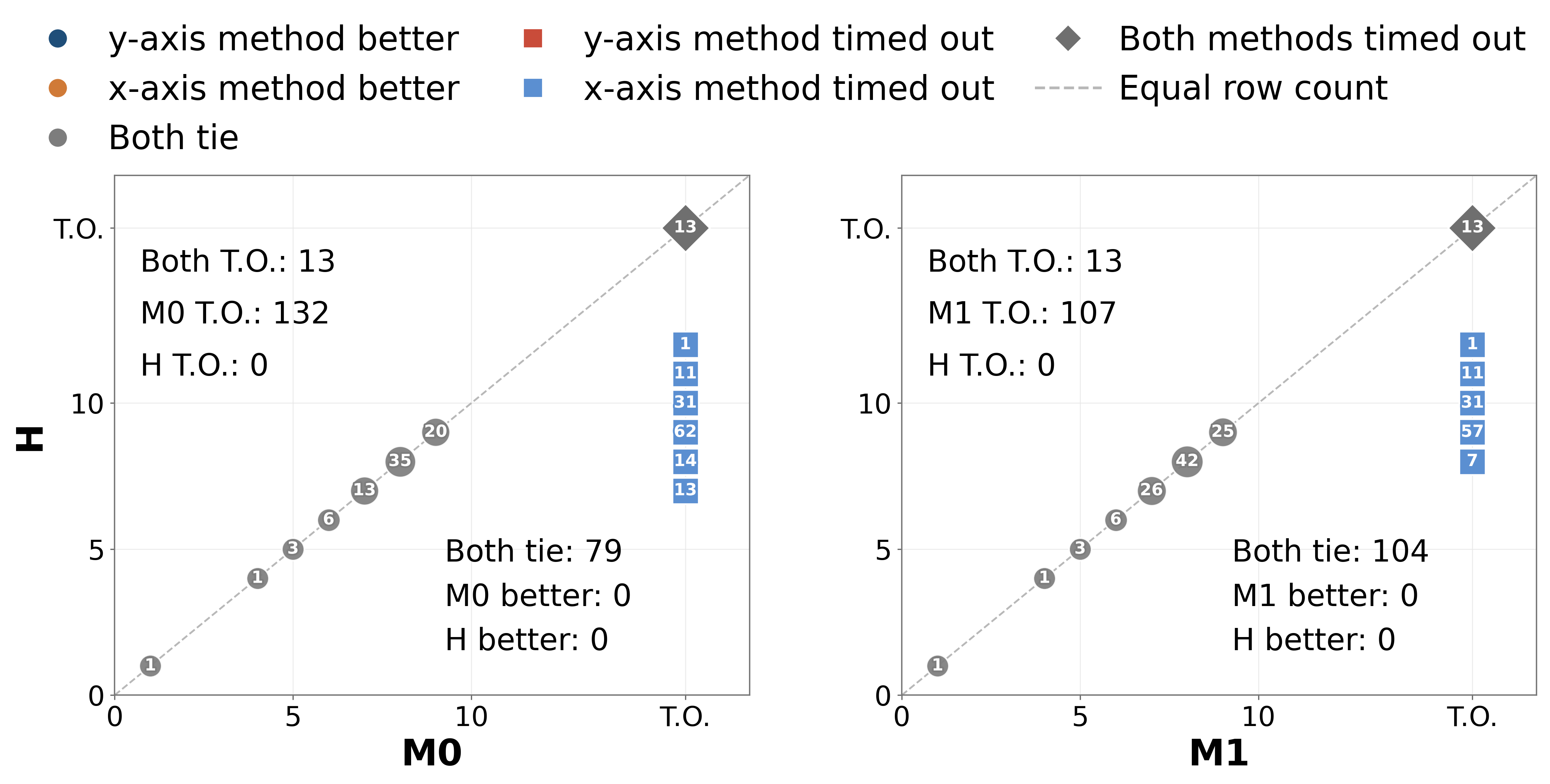}
\caption{Pairwise comparison of solution values (numbers of rows) and timeouts, 
contrasting exact methods $\Mzero$ (left) and $\Mone$ (right) with heuristic $\text{H}$.}
\label{fig:heuristic-rows}
\end{figure}

\begin{figure}[h!]
\centering
\includegraphics[width=\textwidth]{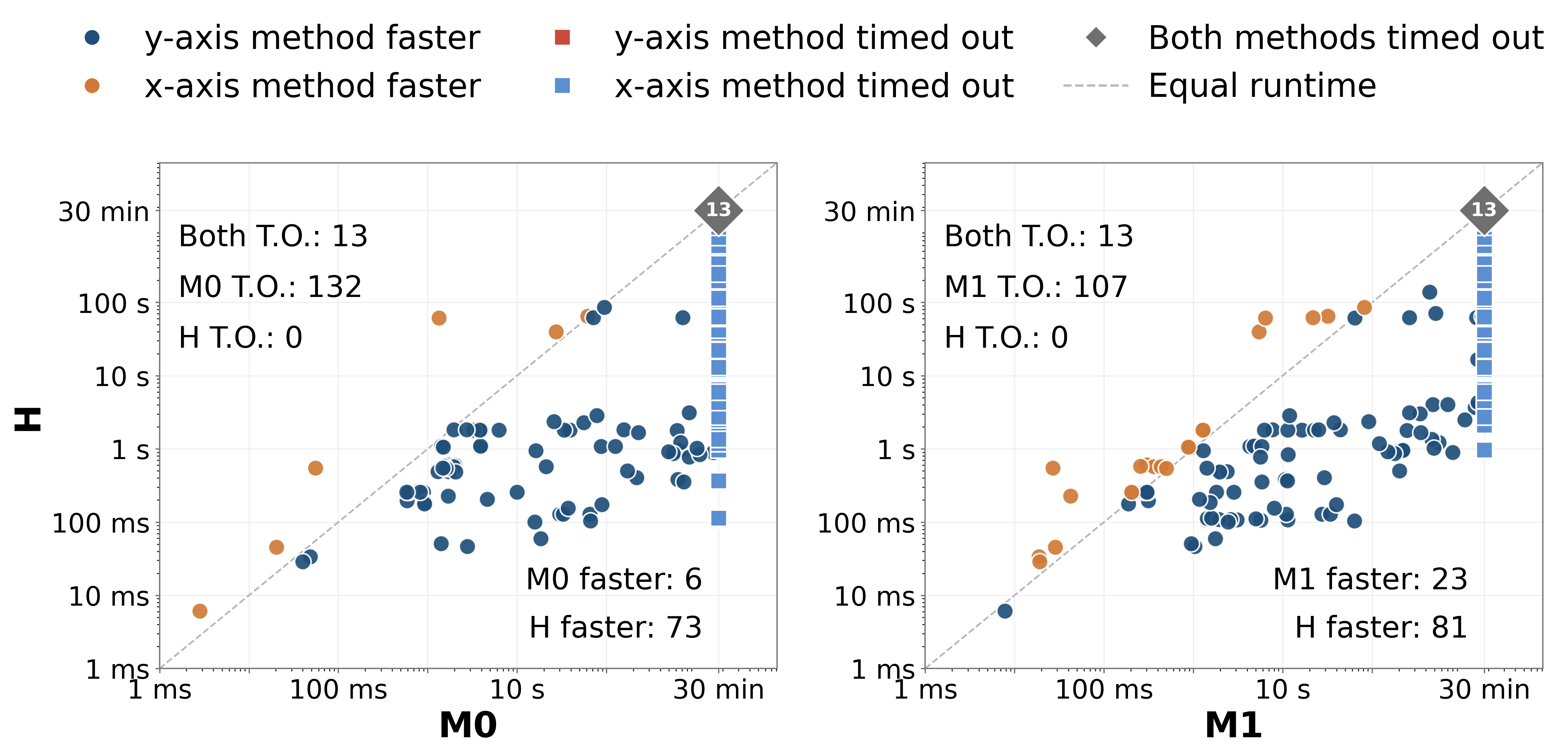}
\caption{Pairwise comparison of runtimes and timeouts, 
contrasting exact methods $\Mzero$ (left) and $\Mone$ (right) with heuristic $\text{H}$.}
\label{fig:heuristic-times}
\end{figure}

Figure~\ref{fig:heuristicS-rows} compares the solution value (number of rows) of the covering arrays obtained by heuristic methods, 
comparing $\text{H}$ with each other method. 
We conclude from these plots that $\text{SA}$ and $\text{TS}$ are the best performing heuristics besides $\text{H}$, 
for which reason we restrict the subsequent analysis to only those. 

In the case of ACTS, 
we note that a covering array is also produced for the 76 instances that were proven to be infeasible in our preprocessing. 
Since infeasible solutions is not included in the scatter plots, that did not affect the comparisons. 
In the case of the heuristics within CCAG (AETG, DDA, PSO, SA, and TS), 
the plots would have indicated 76 timeouts if those instances were included. 

\begin{figure}[h!]
\centering
\includegraphics[width=\textwidth]{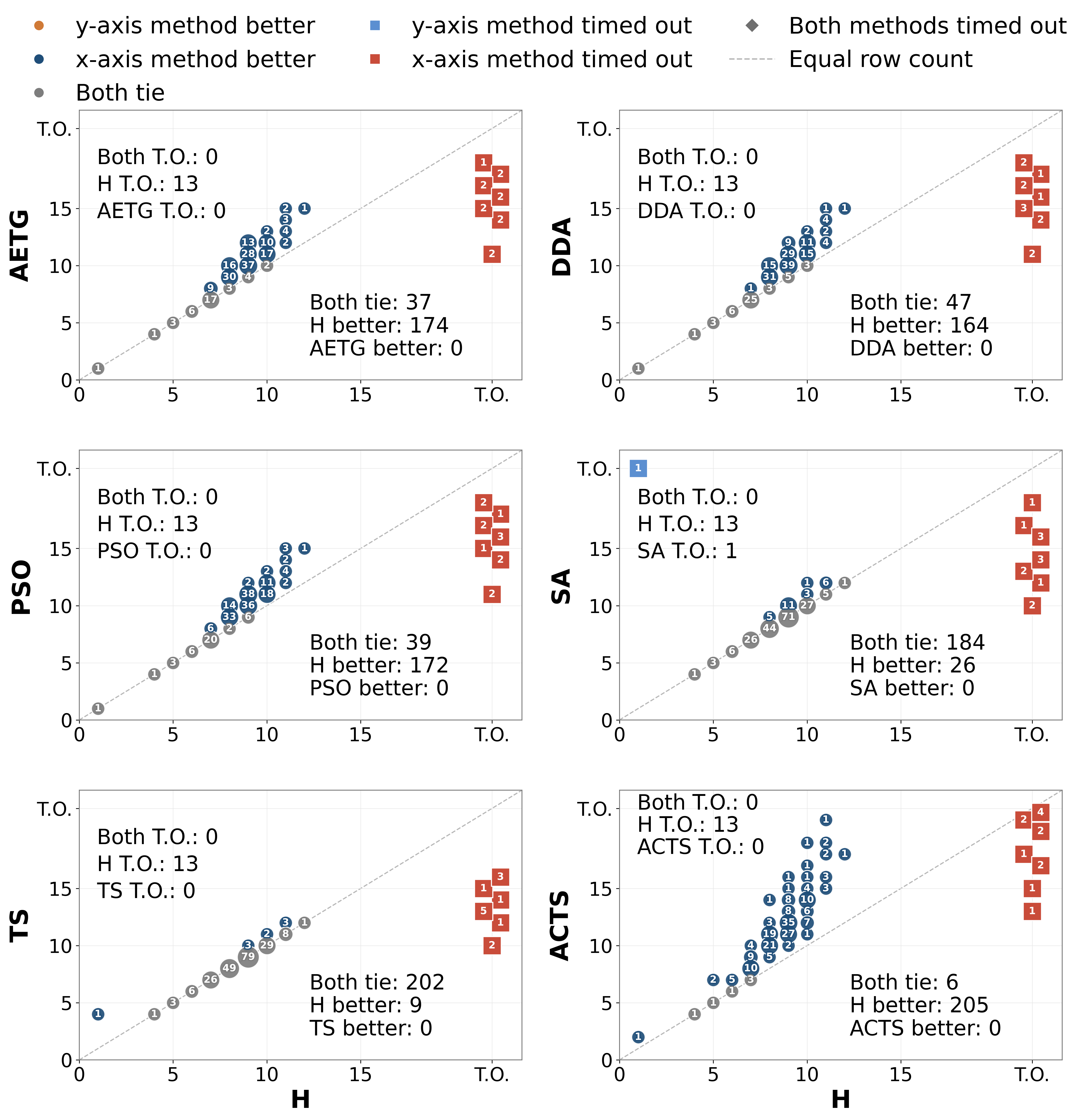}
\caption{Pairwise comparison of solution values (numbers of rows) and timeouts, 
contrasting heuristic $\text{H}$ with heuristics AETG, DDA, PSO, SA, TS, and ACTS.}
\label{fig:heuristicS-rows}
\end{figure}

Figure~\ref{fig:heuristicS-times} compares the time taken by $\text{H}$ to solve each instance 
with the time that it took SA and TS to obtain their best solution value for the first time. 

\begin{figure}[h!]
\centering
\includegraphics[width=\textwidth]{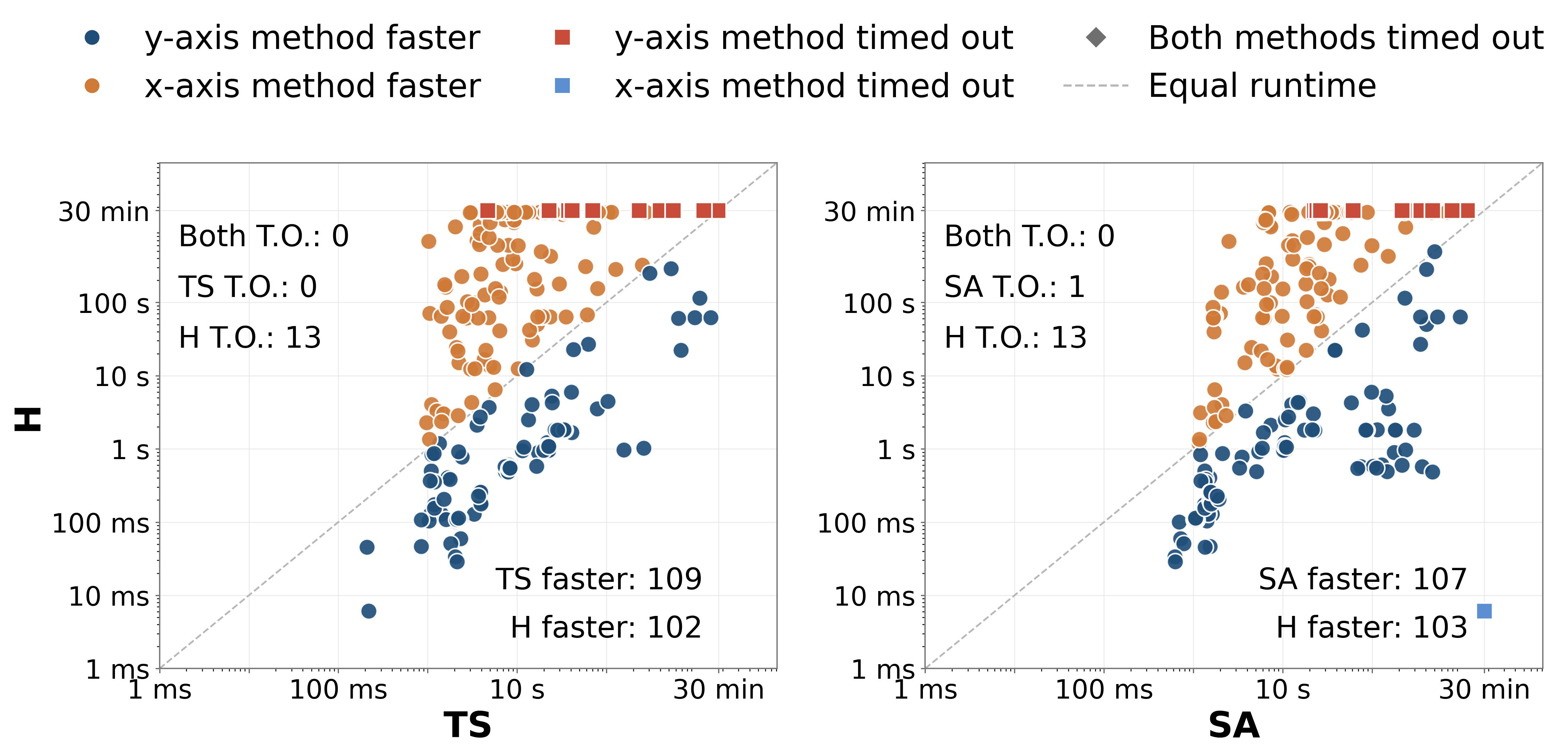}
\caption{
Pairwise comparison of time for finding the best solution and timeouts, 
contrasting heuristic $\text{H}$ with the best performing heuristics SA and TS.}
\label{fig:heuristicS-times}
\end{figure}

\paragraph{Answering (iv)} 
We note that $\text{H}$ matches the solution value (number of rows) of all the solutions obtained with $\Mzero$ and $\Mone$, 
more than doubling the total number of instances solved with respect to each (Figure~\ref{fig:heuristic-rows}).  
Among the instances that each exact method also solved, 
a majority is solved at least one order-of-magnitude faster by $\text{H}$ (Figure~\ref{fig:heuristic-times}). 
Hence, 
$\text{H}$ has solved twice as many instances as $\Mzero$ and $\Mone$,  
and the solutions obtained for instances also solved by $\Mzero$ or $\Mone$ were generally solved faster and all happened to be optimal---although $\text{H}$ is an heuristic and thus bound to occasionally produce suboptimal solutions. 

\paragraph{Answering (v)} 
We note that $\text{H}$ produced solutions with as many or fewer rows than the other heuristics; 
although it occasionally timed out without a solution, 
and the same happened but once with another heuristic if we ignore the 76 timeouts due to infeasibility proven by preprocessing (Figure~\ref{fig:heuristicS-rows}). 
Compared with the best-performing competing heuristics SA and TS in the cases in which the instances are solved by $\text{H}$ and each of those, 
$\text{H}$ solves almost half of the instances faster in each case (Figure~\ref{fig:heuristicS-times}). 
Hence, 
balancing time outs and solution values in feasible instances, 
$\text{H}$ is competitive with SA and TS, 
narrowly better handling more instances than SA (26 better solutions vs. 13 against 1 timeouts) and fewer instances than TS (9 better solutions vs. 13 timeouts); 
whereas SA and TS timeout in the 76 instances that $\text{H}$ identifies as infeasible. 

\paragraph{Answering (vi)}
There are no instances solved by $\Mone$ for which a better solution is found by heuristic $\text{H}$ (Figure~\ref{fig:heuristic-rows}). 
Since $\text{H}$ solved all the instances also solved by $\Mone$ (Figure~\ref{fig:heuristic-rows}) and no other heuristic produced a better solution than $\text{H}$ in the cases in which $H$ produced a solution (Figure~\ref{fig:heuristicS-rows}), 
then by consequence the previous statement applies to the other heuristics tested. 
Hence, 
we continue not having observed a contradiction with Conjecture~\ref{conj:balanced}, 
which remains unrefuted.

Finally,  
we study in more detail the instances used with respect to number of parameters and pairwise constraints to answer the following questions: 
\begin{enumerate}[(i)]
    \setcounter{enumi}{6}
    \item \emph{How often each type of instance is solved to optimality or proven infeasible?}
    \item \emph{How does the optimal solutions compare with the unconstrained case?}
\end{enumerate} 

We use two new types of plots in this part. 
Figure~\ref{fig:m1-status} distributes the instances with respect to the solutions obtained with $\Mone$ among optimal, timed out, and proved infeasible within each number of parameters and pairwise constraints. 
Within each such combination of number of parameters and pairwise constraints, 
Figure~\ref{fig:m1-vs-unconstrained} distributes the instances with respect to the value of the optimal solution (number of rows) in comparison to the unconstrained case. 

\begin{figure}[h!]
\centering
\includegraphics[width=\textwidth]{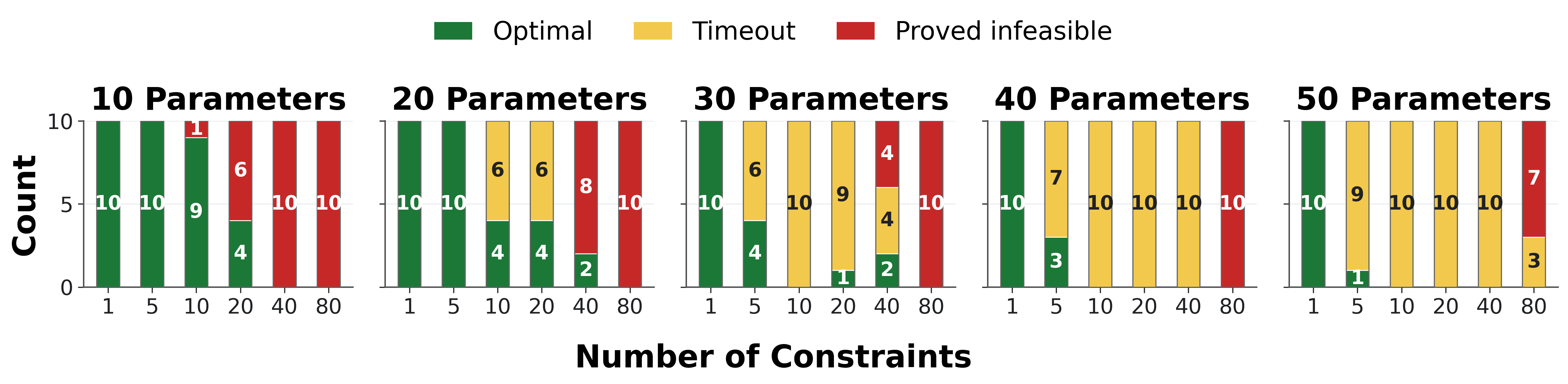}
\caption{Instances solved to optimality, timed out, and proven infeasible by $\Mone$ for each number of parameters and pairwise constraints.}
\label{fig:m1-status}
\end{figure}

\begin{figure}[h!]
\centering
\includegraphics[width=\textwidth]{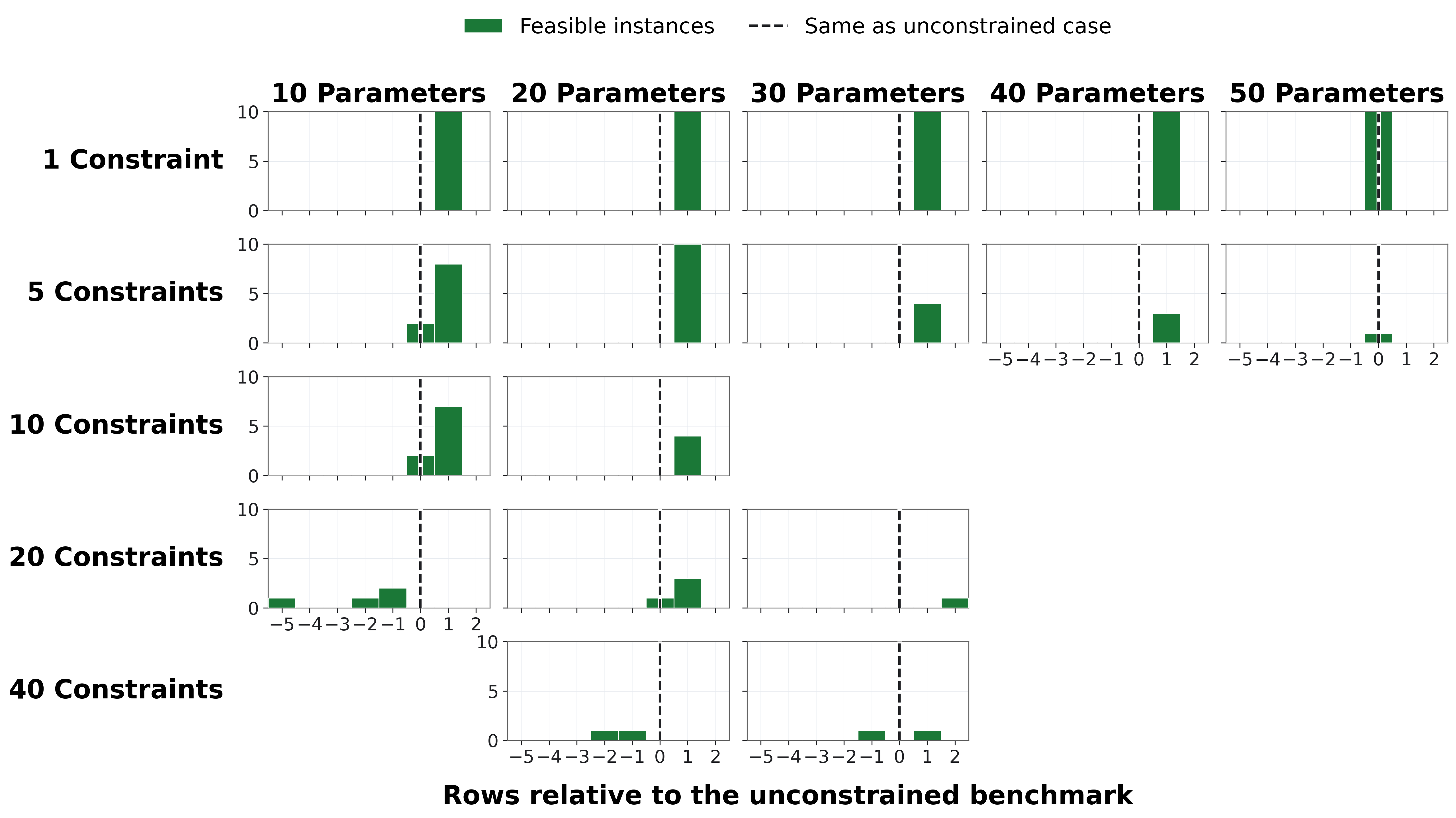}
\caption{Difference in optimal solution value (row count) with respect to the unconstrained case for instances solved to optimality by $\Mone$ per each number of parameters and pairwise constraints. Empty cells have no feasible instances.}
\label{fig:m1-vs-unconstrained}
\end{figure}

\paragraph{Answering (vii)} 
Increasing the number of pairwise constraints shifts the most common outcome from optimal to proved infeasible, 
whereas increasing the number of parameters creates and widens a gap between those cases in which the most common outcome is timeout (Figure~\ref{fig:m1-status}).

\paragraph{Answering (viii)} 
Adding from one to ten random constraints generally increases the number of rows in the optimal solutions when compared to the unconstrained case;  
with twenty constraints or more, we observe both increases and decreases (Figure~\ref{fig:m1-vs-unconstrained}). 
Although we have observed two pairwise constraints being enough for the optimal covering array to have fewer rows (see discussion about $\BFM^{(8)}$ at the beginning of Section~\ref{sec:pairwise}), 
we note that those constraints were not random.

\section{Conclusion}

In this paper, 
we have studied how to solve the coverage problem in combinatorial testing 
for binary parameters and subject to pairwise constraints. 
First, we generalized results on the structure of optimal solutions by showing that: 
\begin{enumerate}[(i)]
\item in the absence of constraints, the constructive algorithm widely used in the literature may in some cases generate all optimal solutions; 
whereas 
\item constrained testing may lead to comparatively more tests, but also occasionally to paradoxically fewer tests---even among binary parameters; 
\item constraints on subsets that should otherwise be covered fundamentally change the structure of testing suites; but, perhaps more importantly, 
\item the key to finding optimal solutions remains to be on balancing the number of zeros and ones per parameter, as much as still possible.
\end{enumerate}
In fact, the prevalence of balanced columns has been empirically observed in unconstrained optimal solutions~\cite{ANewBacktrackingAlgorithmforConstructingBinaryCoveringArraysofVariableStrength}; 
and in a sense relates to the alternative---but less flexible---use of orthogonal arrays~\cite{Williams2000}, 
in which all pairs of assignments must appear the same number of times.

Second,  
we showed (a) how to obtain optimal testing suites by solving a series of Integer Linear Programming~(ILP) models; 
and (b) how to obtain similar solutions faster with an heuristic that is competitive with commonly used techniques in quality and runtime. 
To the best of our knowledge, 
this is the first non-enumerative approach to obtain the optimal solution in the setting of binary parameters subject to pairwise constraints.

In future work, 
we hope to develop new theoretical results on the minimum number of tests and on the feasibility conditions for covering arrays, 
similar to those in Sections~\ref{sec:lb} and \ref{sec:cliques}, 
and explore more efficient approaches to solve the covering problem to optimality.

\begin{credits}
\subsubsection{\ackname} Changkun Guan was supported by Bucknell University's PP\&L Undergraduate Research Fund. Hunter Gehman was supported by Bucknell University's Freeman College of Management Gift Funds. Hunter Gehman and Mikey Ferguson were supported by Bucknell University's Presidential Fellowship. 
\end{credits}

 \bibliographystyle{splncs04}
 \bibliography{references}

\end{document}